\newcommand{\B}{\mathbf}
\newcommand{\bp}{\mathbf{\Phi}}
\newcommand{\bs}{\mathbf{s}}
\newcommand{\al}{\alpha}
\newcommand{\e}{\epsilon}
\newcommand{\ot}{\overline{\theta}}
\newcommand{\ol}[1]{\overline{#1}}
\newcommand{\myref}[1]{(\ref{#1})}
\newenvironment{proof}{\noindent\textbf{Proof\ }}{\hspace*{\fill}$\Box$\medskip}
\newtheorem{theorem}{\textbf{Theorem}}[section]
\newtheorem{lemma}{\textbf{Lemma}}[section]
\newtheorem{remark}{\textbf{Remark}}[section]
\newtheorem{definition}{\textbf{Definition}}[section]
\newtheorem{corollary}{\textbf{Corollary}}[section]
\begin{document}
\title{Data-Driven Time-Frequency Analysis}
\author{Thomas Y. Hou\thanks{Applied and Comput. Math, Caltech,
Pasadena, CA 91125. {\it Email: hou@cms.caltech.edu.}} \and
Zuoqiang Shi\thanks{Mathematical Sciences Center, Tsinghua University, Beijing, China, 100084. 
{\it Email: zqshi@math.tsinghua.edu.cn.}} }
\maketitle

\begin{abstract}
In this paper, we introduce a new adaptive data analysis method
to study trend and instantaneous frequency of nonlinear and 
non-stationary data. This method is inspired by 
the Empirical Mode Decomposition method (EMD) and the recently 
developed compressed (compressive) sensing theory. The main
idea is to look for the sparsest representation of multiscale
data within the largest possible dictionary consisting of
intrinsic mode functions of the form
$\{ a(t) \cos(\theta(t))\}$, where $a \in V(\theta)$, $V(\theta)$
consists of the functions smoother than $\cos(\theta(t))$ and 
$\theta'\ge 0$. This problem can be formulated as a nonlinear 
$L^0$ optimization problem. In order to solve this optimization 
problem, we propose a nonlinear matching pursuit method by 
generalizing the classical matching pursuit for the $L^0$ 
optimization problem. One important advantage of this nonlinear 
matching pursuit method is it can be implemented very efficiently 
and is very stable to noise. Further, we provide a convergence 
analysis of our nonlinear matching pursuit method under certain 
scale separation assumptions. Extensive numerical examples will 
be given to demonstrate the robustness of our method and comparison 
will be made with the EMD/EEMD method. We also apply our method to 
study data without scale separation, data with intra-wave frequency 
modulation, and data with incomplete or under-sampled data. 

\end{abstract}

\section{Introduction}

Developing a truly adaptive data analysis method is important for
our understanding of many natural phenomena. Traditional data analysis
methods, such as the Fourier transform, use
pre-determined basis. They provide an effective tool to process
linear and stationary data. However, there are still some limitations
in applying these methods to analyze nonlinear and nonstationary
data. Time-frequency analysis has been developed to overcome the
limitations of the traditional techniques by representing a signal
with a joint function of both time and frequency. The recent
advances of wavelet analysis have opened a new path for time-frequency
analysis. A significant breakthrough of wavelet analysis is the use
of multi-scales to characterize signals. This technique has led to
the development of several wavelet-based time-frequency analysis
techniques \cite{JP90,Daub92, Mallat09}.

Another important approach in the time-frequency analysis is to
study instantaneous frequency of a signal. Some of the
pioneering work in this area was due to Van der Pol
\cite{VdP46} and Gabor \cite{Gabor46}, who introduced
the so-called Analytic Signal (AS) method that uses the
Hilbert transform to determine instantaneous frequency of
a signal. This Analytic Signal method is one of the most
popular ways to define instantaneous frequency. Until very recently,
this method works mostly for monocomponent signals in which
the number of zero-crossings is equal to the number of local
extreme \cite{Boashash92c}. There were other attempts to
define instantaneous frequency such as the zero-crossing method
\cite{Rice44,Shekel53,Meville83} and the Wigner-Ville distribution
method \cite{Boashash92c,LWB93,QC96,Flandrin99,LT96,Picinbono97}.
However most of these methods are rather restrictive.
More substantial progress has been made only recently
with the introduction of the EMD method \cite{Huang98}.
The EMD method provides an effective tool to decompose a
signal into a collection of intrinsic mode functions (IMF)
that allow well-behaved Hilbert transforms for computation
of physically meaningful time-frequency representation.
We remark that the Hilbert spectral representation based on
the wavelet projection has also been carried in \cite{OW04}.

Inspired by the EMD method and the recently developed compressed
(compressive)
sensing theory, we propose a data-driven time-frequency analysis method.
There are two important ingredients of this method. The
first one is that the basis that is used to decompose the data
is derived from the data rather than determined {\it a priori}.
This explains the name ``data-driven'' in our method. The second
ingredient is to look for the sparsest decomposition of the signal
among the largest possible dictionary consisting of intrinsic mode
functions. The adoption of this data-driven basis and the search
for the sparsest decomposition over this highly redundant basis
make our time-frequency analysis method fully adaptive to the
signal. As we are going to demonstrate later, our method can reveal
some hidden physical information of the signal, such as trend and
instantaneous frequency.

Our data-driven time-frequency analysis method is motivated by the
observation that many multi-scale data often have an intrinsic sparse
structure in the time-frequency plane, although its representation in
the physical domain could be quite complicated. The challenge is that
such sparse representation is valid only for certain multiscale basis
that is adapted to the data and is unknown {\it a priori}.
Finding such nonlinear multiscale basis is an essential ingredient
of our method. In this sense, our problem is more difficult than the 
compressed (compressive) sensing problem in which the basis is 
assumed to be known {\it a priori}. One way to find the adaptive basis
is to learn from the data if we have a large number of data samples that
share the similar physical property. This does not apply to our problem
since we deal with only a single signal. We overcome this difficulty
by reformulating the problem as a nonlinear optimization among the
largest possible dictionary. The trade-off is that such decomposition
is not unique. We need to exploit the intrinsic sparse structure of
the data to select the sparsest one among all the possible decompositions.

In our method, the dictionary is given as following:
\begin{eqnarray}
    \mathcal{D}=\left\{a(t)\cos\theta(t):\; \theta'(t)\ge 0,\; a(t),\theta'(t) \in V(\theta)\right\},
  \end{eqnarray}
where $V(\theta)$ is a linear space consisting of functions smoother than
$\cos\theta(t)$.  The construction of $V(\theta)$ with given $\theta(t)$
is in general an overcomplete Fourier basis given below:
\begin{eqnarray}
\label{2-fold-fourier}
  V(\theta)=\mbox{span}\left\{1, \cos\left(\frac{k\theta}{2L_\theta}\right),\sin\left(\frac{k\theta}{2L_\theta}\right),
\; k=1,\cdots,2\lambda L_\theta\right\},
\end{eqnarray}
where $L_\theta=\lfloor\frac{\theta(1)-\theta(0)}{2\pi}\rfloor$,
$\lfloor\mu \rfloor$ is the largest integer less than $\mu$,
 and $\lambda\le 1/2$ is a parameter to control the smoothness of $V(\theta)$.
We then decompose the signal over this dictionary by
looking for the sparest decomposition. The sparest decomposition can be
obtained by solving a nonlinear optimization problem:
 \begin{eqnarray}
P:\quad &&\mbox{Minimize}\quad\quad\quad\quad\quad\quad\quad M\\
&&\mbox{Subject to:}\quad f(t)=\sum_{k=1}^M a_k(t)\cos\theta_k(t),\quad  a_k(t)\cos\theta_k(t)\in \mathcal{D},\;\quad k=1,\cdots,M\nonumber.
\end{eqnarray}
When the signal is polluted by noise, the equality in the above constraint
is relaxed to be an inequality depending on the noise level.
This optimization problem can be viewed as a nonlinear version of the
$L^0$ minimization problem and is known to be very challenging to solve.
Inspired by the compressed (compressive) sensing theory \cite{BDL09},
we propose a $l^1$-regularized nonlinear matching pursuit method to 
solve this nonlinear optimization problem.

Our nonlinear matching pursuit is inspired by the linear matching pursuit
method \cite{MZ93,TG07}. We first extract an intrinsic mode function
$a(t)\cos\theta(t)\in \mathcal{D}$ from the signal $f(t)$ by looking for
the one which matches the signal $f(t)$ best among all the elements in
$\mathcal{D}$. This would imply the following nonlinear optimization problem:
\begin{eqnarray}
\mbox{Minimize}\quad \gamma \| \widehat{a}(t)\|_{l^1} +  \|f(t)-a(t)\cos\theta(t)\|_{l^2}^2,\quad \mbox{Subject to}\;\; a(t)\cos\theta(t)\in \mathcal{D},\nonumber
\end{eqnarray}
where $\gamma >0$ is a regularization parameter and $\widehat{a}(t)$ is the 
representation of $a(t)$ in $V(\theta)$. In some cases when the 
signals are periodic, we can choose 
$\gamma = 0$.
Denote by $r(t)$ the residual after subtracting $a(t)\cos\theta(t)$ from
$f(t)$, i.e. $r(t)=f(t)-a(t)\cos\theta(t)$. We can then treat $r(t)$ as
a new signal to extract the remaining IMFs. There are two important
advantages of this nonlinear matching pursuit approach. The first one
is that this method is very stable to noise perturbation. The second one
is that it can be implemented very efficiently. In the case of $\gamma = 0$, 
the resulting method can be solved approximately by Fast Fourier Transform, and
the complexity of our algorithm is of order $O(N\log N)$ where $N$ is
the number of data sample points that we use to represent the signal.
The low computational cost and the robustness to noise perturbation
make this method very effective in many applications. Moreover, for
data that satisfy certain scale separation conditions, we prove
that our method recovers the IMFs and their instantaneous frequencies
accurately.

We perform extensive numerical experiments to test the robustness and the
accuracy of our data-driven time-frequency analysis method for both 
synthetic data and some real data. Our results show that the nonlinear 
matching pursuit can indeed decompose a multiscale signal into a sparse 
collection of intrinsic mode functions. We also compare our method with
the original EMD method. For the data without noise, we find that our 
method gives results comparable to those obtained by the EMD method. 
Moreover, for noisy data, our method seems to provide better estimation 
of the instantaneous frequency and IMFs than EMD and recently developed EEMD 
method \cite{WH09}.

A common difficulty in many data analysis methods is the relatively large
error produced near the boundary of the data set. For the EMD method,
this source of error is referred to as the ``end effect'', which is 
primarily caused by the use of cubic spline interpolation in 
constructing the envelope and the median of the signal. Our data-driven 
time-frequency analysis method seems to be less sensitive to this end 
effect, especially when the data satisfy certain scale separation 
property. 

We have also extended our data-driven time-frequency analysis method 
to decompose data that do not have a good scale separation property. 
By incorporating a shape function into our dictionary that is adapted 
to the signal, we can also extend our method to decompose data with 
strong intra-wave modulation. Finally, we demonstrate that our 
data-drive time-frequency analysis  method can be applied to recover 
the original signal with missing data in certain interval. The 
recovered signal as well as their instantaneous frequency seems to 
have reasonably good accuracy. We also apply our method to decompose
under-sampled data. The result is quite encouraging even if the 
under-sampled data are polluted by noise.

We remark that there has been some recent progress in developing
a mathematical framework for an EMD like method using synchrosqueezed
wavelet transforms by Daubechies, Lu and Wu \cite{DLW11}. This
seems to be a very promising approach. We have performed some
preliminary numerical experiments to compare the performance of
our method with the synchrosqueezed wavelet approach.
In many cases, we find that the two methods give comparable and
complementary results. We are currently exploring a hybrid approach
that combines the advantages of our method with those of the
synchrosqueezed wavelet approach. Our preliminary results seem
quite encouraging. We will report this in a forthcoming paper.

The remaining of the paper is organized as follows. In
Section 2, we give a brief review of some existing data analysis
methods such as the matching pursuit, the basis pursuit and
the EMD method. We introduce our adaptive data analysis method.
In Section 3, a simplified version of our data-driven time-frequency data analysis
method is introduced for periodic data which can be
implemented efficiently by using the Fast Fourier Transform.
In Section 4, we present some numerical experiments, include 
incomplete or under-sampled data, to demonstrate 
the performance of our method. In Section 5, we generalize our 
data-driven time-frequency data analysis method to analyze data with 
poor scale separation property 
and data with strong intra-wave modulation. We present some
preliminary error analysis of our data-driven time-frequency data
analysis method in Section 6. The technical proof of the main result 
is deferred to the Appendix. Some conclusions are made in Section 7.

\section{Brief review of the existing sparse decomposition methods}
\label{review}
A considerable focus of activities in the recent
signal processing literature has been the development of the sparse signal representations over
a redundant dictionary. Among these methods, the matching pursuit \cite{MZ93} and the basis pursuit \cite{CDS98} have
attracted a lot of attention in recent years due to the development of the compressed (compressive) sensing.
All these methods consist of two parts: a dictionary to decompose
the signal and a decomposition method to select the sparsest decomposition.

\subsection{Dictionaries}

A dictionary is a collection of parameterized waveforms $\mathcal{D}=\{\phi_\gamma\}_{\gamma\in \Gamma}$.
Many dictionaries have been proposed in the literature.
Here we review a few of them that have been used widely.

\vspace{3mm}
\noindent
{\bf A Fourier Dictionary.}
A Fourier dictionary is a collection of sinusoidal
waveforms. More specifically, the waveforms consist of
the following two families,
\begin{eqnarray}
  \phi_{\omega,0}=\cos(\omega t),\quad \phi_{\omega,1}=\sin(\omega t).
\end{eqnarray}
For the standard Fourier dictionary, $\omega$
 runs through the set of all cosines with
Fourier frequencies $\omega_k=2k\pi/n,\; k=0,1,\cdots,n/2$, and all sines with Fourier frequencies
$\omega_k=2k\pi/n,\; k=1,\cdots,n/2-1$, where $n$ is the number of sample points.
We can also obtain an overcomplete Fourier dictionary by sampling the
frequencies more finely. Let $l>1$. We may choose
$\omega_k=2k\pi/(ln),\; k=0,1,\cdots,ln/2$ for cosines
 and $\omega_k=2k\pi/(ln),\; k=1,\cdots,ln/2-1$ for sines. This is an
$l$-fold overcomplete system. In the algorithm for non-periodic data,
we will use this kind of overcomplete Fourier dictionary.

\vspace{3mm}
\noindent
{\bf A Wavelet Dictionary.}
A wavelet dictionary is a collection of translations
and dilations of the basic mother wavelet $\psi$, together with
translations of the scaling function $\varphi$ defined below:
\begin{eqnarray}
  \phi_{a,b,0}=\frac{1}{\sqrt{a}}\psi\left(\frac{t-b}{a}\right),\quad
\phi_{a,b,1}=\frac{1}{\sqrt{a}}\varphi\left(\frac{t-b}{a}\right) .
\end{eqnarray}
For the standard wavelet dictionary, we let $a, b$
 run through the discrete collection of
mother wavelets with dyadic scales $a_j=2^j/n,\;j=j_0,\cdots,\log_2(n)-1$, and locations
that are integer multiples of the scale $b_{j,k}=ka_j,\;k=0,\cdots,2^j-1$, and the collection
of scaling functions at the coarse scale $j_0$. This dictionary consists of $n$ waveforms, which form an orthonormal basis.
As in the Fourier dictionary, an overcomplete wavelet
dictionary can be obtained by sampling the locations more finely.

\vspace{3mm}
\noindent
{\bf A Time-Frequency Dictionary.}
A typical time-frequency dictionary is the Gabor dictionary
due to Gabor (1946). In this dictionary, we take
$\gamma=(\omega,\tau,\theta,\delta)$, where $\omega\in [0,\pi)$ is
frequency, $\tau$ is a location, $\theta$ is a phase, and $\delta$
is the duration. We define the waveform as follows:
\begin{eqnarray}
  \phi_\gamma(t)=\exp\left(-\frac{(t-\tau)^2}{\delta^2}\right)\cos\left(\omega(t-\tau)+\theta\right).
\end{eqnarray}
Such waveforms consist of frequencies near $\omega$ and essentially vanish far away from $\tau$.

\vspace{3mm}
\noindent
{\bf An EMD Dictionary}.
We can also define a dictionary via the EMD method.
In the EMD method \cite{Huang98}, the dictionary is the collection of all
Intrinsic Mode Functions (IMF), which are functions defined descriptively
by enforcing the following two conditions:
\begin{itemize}
  \item[1.] The number of the extreme and the number of the zero crossings of the function must be equal or differ at most by one;
  \item[2.] At any point of the function, the average of the upper
envelope and the lower envelope defined by the local extreme should
be zero (symmetric with respect to zero).
\end{itemize}

Inspired by the EMD method, we will use a variant of the EMD
dictionary to construct a sparse decomposition of a signal via
nonlinear optimization.

\subsection{Decomposition Methods}

In this subsection, we review a few decomposition methods that
can be used to give a sparse decomposition of a signal by exploiting
the intrinsic sparsity structure of the signal. In recent years,
there have been a lot of research activities in looking for the
sparest representation of a signal over a redundant dictionary
\cite{MZ93,CDS98,Donoho06,Candes-Tao06,CRT06a}, i.e.
look for a decomposition of a signal $f$ over a given dictionary
$\mathcal{D}= \{\phi_\gamma\}_{\gamma\in \Gamma}$ as
 \begin{eqnarray}
\label{decomp-dic}
   f=\sum_{k=1}^m\al_{\gamma_k} \phi_{\gamma_k} + R^m,
 \end{eqnarray}
with the smallest $m$, where $R^m$ is the residual. Whether or not 
a signal can be decomposed into a sparse decomposition depends on 
the choice of the dictionary
that we use to decompose the signal. In general, a more redundant
dictionary tends to give better adaptivity, which implies better
sparsity of the decomposition. However, when the dictionary
is highly redundant, the decompositions are not unique.
We need to give a criterion to pick up the ``best'' decomposition
among all the possible choices.

\vspace{3mm}
\noindent
{\bf Matching Pursuit.}
In \cite{MZ93}, Mallat and Zhang introduced a general decomposition 
method called the matching pursuit that exploits the sparsity of a 
signal. Starting from an initial approximation $\bs^0 = 0$ and a
residual $\B{r}^0 = s$, the matching pursuit builds up a sequence of 
sparse approximations step by step. At stage $k$, the method identifies 
an atom that best matches the residual and then adds it to the current
approximation, so that $s^k = s^{k-1}+\al_k\phi_{\gamma k}$, where 
$\alpha_k=<r^{k-1},\phi_{\gamma k}>$ and $r^k=\bs-\bs^{k}$. After $m$ 
steps, one has a representation of the form \myref{decomp-dic}, with 
residual $R^m=r^m$. A similar algorithm was proposed for Gabor 
dictionaries by S. Qian and D. Chen \cite{QC94}.

An intrinsic feature of this algorithm is that when stopped after a few
steps, it yields an approximate sparse representation using only a
few atoms. When the dictionary is orthogonal, the method works perfectly.
If the dictionary is not orthogonal, the situation is less clear.
Recently, J. Tropp and A. Gilbert proved that under some assumptions
on the basis, the orthogonal matching pursuit can solve
the original $l_0$ minimization problem \cite{TG07}.

\vspace{3mm}
\noindent
{\bf Basis Pursuit.}
Another important class of decomposition methods is the basis pursuit,
which was introduced by S. Chen, D. Donoho and M. Saunders \cite{CDS98}.
First, we reformulate the decomposition problem in the following way.
Suppose we have a discrete dictionary of $p$ waveforms
and we collect all these waveforms as columns of an $n$ by
$p$ matrix $\bp$. The decomposition problem \myref{decomp-dic} can
be reformulated as:
\begin{eqnarray}
  \bs=\bp \alpha ,
\end{eqnarray}
where  $\alpha=(\alpha_\gamma)$ is the vector of coefficients in \myref{decomp-dic}.

The basic idea of the basis pursuit is to find a sparse representation
of the signal whose coefficients have a minimal $l_1$ norm, i.e.
the decomposition is obtained by solving the problem
\begin{eqnarray}
  \min\|\al\|_{l^1},\quad \mbox{subject to}\; \bp \alpha=\bs.
\end{eqnarray}

Recently, the basis pursuit has received a lot of attention,
since it is found that under some conditions the basis pursuit
can recover the exact solution of the original $l_0$ minimization
problem \cite{Candes-Tao06, Donoho06}. There has been extensive
research to obtain a sparse representation by the basis pursuit in
a variety of applications. An essential component of the basis
pursuit is to solve the $l^1$ minimization problem. The computational
cost of solving this $l^1$ minimization is more expensive than
the least-square problem in the matching pursuit, although a powerful
Split Bregman method has been introduced by Goldstein and Osher
to speed up the $l^1$ minimization problem considerably \cite{GO09}.

\vspace{3mm}
\noindent
{\bf The EMD decomposition via a sifting process.}
The EMD method decomposes a signal to its IMFs sequentially.
The basic idea behind this approach is the removal of the
local median from a signal by using a sifting process.
Specifically, for a given signal, $f(t)$, one tries to
decompose it as a sum of the local median $m(t)$, and an IMF.
A cubic spline polynomial is used to interpolate
all the local maxima to obtain an upper envelope, and
to interpolate all the local minima to obtain a lower
envelope. By averaging the upper and lower envelopes, one obtains an
approximate median for $m(t)$. One then decides whether or not to
accept the obtained $m(t)$ as our local median depending on
whether $f(t)-m(t)$ gives an acceptable IMF that satisfies the
two conditions that are specified in the definition of an EMD
dictionary. If $f(t)-m(t)$ does not satisfy these
conditions, one can treat $f(t)-m(t)$ as a new signal and construct
a new candidate for the IMF by using the same procedure
described above. This sifting process continues until we obtain
a satisfactory IMF, which we denote as $f_n(t)$. Now we can treat
$f(t)-f_n(t)$ as a new signal, and apply the same procedure to
generate the second IMF, $f_{n-1}(t)$. This procedure continues
until $f_0(t)$ is either monotone or contains at most one extremum.
For more details of the sifting process, we refer to \cite{Huang98}.

\vspace{3mm}
\noindent
{\bf Decomposition based on a nonlinear $TV^3$ minimization.}
Inspired by the EMD method, we proposed a decomposition method
based on a nonlinear $TV^3$ minimization in our previous paper
\cite{HS11}. Here $TV^3$ is the total variation of the third order
derivative of a function, defined as
$TV^3(f) = \int_a^b |f^{(4)} (t)| dt $. We use a $TV^3$ norm
because the $L^1$ norm or the total variation norm is not strong
enough to enforce the regularity of the median or the envelope of
our decomposition. For example, the use of the total variation norm,
which is very popular in imaging processing community, tends to
give a decomposition whose median or envelope is piecewise constant,
which is referred to as the stair-case effect. The $TV^3$ norm, on
the other hand, gives a much smoother decomposition for both the
median and the envelope. Incidentally, the minimization using the
$TV^3$ norm tends to favor piecewise cubic polynomials such as
cubic splines. Thus, our method gives results that are qualitatively
similar to those obtained by the EMD method which uses cubic splines 
to construct its median and envelope from the local extrema of the signal.

We now give a brief review of our $TV^3$ decomposition method.
In our approach, every element in our dictionary automatically
satisfies the conditions of IMF. There is no need to do any
sifting or use the Hilbert transform in our method.
First, we decompose a signal $f(t)$ into its local median $a_0$ and an
IMF $a_1\cos\theta(t)$ by solving the following nonlinear
optimization problem:
\begin{eqnarray}
\label{opt-math}
(P)\quad\quad\mbox{Minimize}&\;& TV^3(a_0)+TV^3(a_1),\\
\mbox{Subject to:}&\;& a_0(t)+a_1(t)\cos\theta(t)=f(t),\quad \theta'(t)\ge 0.\nonumber
\end{eqnarray}
To solve this nonlinear optimization problem, we proposed the following Newton type of iterative method:
\begin{itemize}
\item[]\hspace{-6mm}\textbf{Initialization:} $\theta^0=\theta_0$.
\item[]\hspace{-6mm}\textbf{Main Iteration:}\hspace{1cm}
\begin{itemize}
\item[Step 1:] Update $a_0^n, \;a_1^n,\;b_1^n$ by solving the
following linear optimization problem:
\begin{eqnarray}
\mbox{Minimize}&\quad& TV^3(a_0^n)+TV^3(a_1^n)+TV^3(b_1^n),\\
\mbox{Subject to }:&\quad& a_0^n+a_1^n\cos\theta^{n-1}(t)+b_1^n\sin\theta^{n-1}(t)=f(t).
\end{eqnarray}
\item[Step 2:] Update the phase function $\theta$:
\begin{eqnarray}
\theta^{n}=\theta^{n-1}-\mu\arctan \left(\frac{b_1^n}{a_1^n}\right),
\end{eqnarray}
where $\mu\in [0,1]$ is chosen to enforce that $\theta^{n}$ is an
increasing function:
\begin{eqnarray}
\mu=\max\left\{\alpha\in [0,1]: \frac{d}{dt}\left(\theta_k^{n-1}-\alpha\arctan \left(\frac{b_1^n}{a_1^n}\right) \right)\ge 0\right\}.
\end{eqnarray}

\item[Step 3:] If $\|\theta^n-\theta^{n-1}\|_2\le \e_0$, stop. Otherwise, go to Step 1.
\end{itemize}
\end{itemize}

In \cite{HS11}, we performed a number of numerical experiments  and
compared the results with those obtained by the EMD (or EEMD) method.
Our results show that this method shares many important
properties with the original EMD method. Moreover, its performance
does not depend on numerical parameters such as the number
of sifting or the stop criterion, which seem to have a major
effect on the original EMD method.

There are two limitations of this approach. The first one is that
the computational cost to solve the $TV^{3}$ minimization problem is
relatively high, even if we use the Split Bregman method of Goldstein
and Osher \cite{GO09}. The second one is that this method is more
sensitive to noise perturbation, although a nonlinear filter was
introduced to alleviate this difficulty. In comparison, the
nonlinear matching pursuit method we introduce in this paper
is very stable to noise perturbation and has a relatively low 
computational cost.

\section{Sparse time-frequency decomposition method based on nonlinear matching pursuit}

Our adaptive data analysis method is based on finding the sparsest
decomposition of a signal by solving a nonlinear optimization problem.
First, we need to construct a large dictionary that can be used to
obtain a sparse decomposition of the signal. In principle, the larger
the dictionary is, the more adaptive (or sparser) the decomposition is.

\subsection{Dictionary}
In our method, the dictionary is chosen to be:
\begin{eqnarray}
  \mathcal{D}=\left\{a(t)\cos\theta(t):\; \theta'(t)\ge 0,\; a(t),\theta'(t) \;\mbox{is smoother than}\; \cos\theta(t)\right\}.
\end{eqnarray}
Let $V(\theta)$ be the collection of all the functions that are smoother
than $\cos\theta(t)$. In general, it is most effective to construct
$V(\theta)$ as an overcomplete Fourier basis given below:
\begin{eqnarray}
  V(\theta)=\mbox{span}\left\{1, \cos\left(\frac{k\theta}{2L_\theta}\right),\sin\left(\frac{k\theta}{2L_\theta}\right),
\; k=1,\cdots,2\lambda L_\theta\right\},
\end{eqnarray}
where $L_\theta=\lfloor\frac{\theta(1)-\theta(0)}{2\pi}\rfloor$,
$\lfloor\mu \rfloor$ is the largest integer less than $\mu$,
 and $\lambda\le 1/2$ is a parameter to control the smoothness of 
$V(\theta)$. Then $\mathcal{D}$ can be written as
\begin{eqnarray}
\label{dic-D}
  \mathcal{D}=\left\{a(t)\cos\theta(t):\; \theta'(t)\ge 0,\; a(t),\theta'(t)\in V(\theta)\right\} .
\end{eqnarray}

In some sense, the dictionary $\mathcal{D}$ defined above can be considered
as a collection of IMFs. This property makes our method as adaptive as
the EMD method. We also call an element of $\mathcal{D}$ as an IMF. 
Since the dictionary $\mathcal{D}$ is highly redundant,
the decomposition over this dictionary is not unique. We need a criterion
to select the ``best'' one among all possible decompositions. We assume
that the data we consider have an intrinsic sparse structure in the
time-frequency plane in some nonlinear and nonstationary basis. However,
we do not know this basis {\it a priori} and we need to derive (or learn)
this basis from the data. Based on this consideration, we adopt sparsity
as our criterion to choose the best decomposition. This criterion yields
the following nonlinear optimization problem:
 \begin{eqnarray}
P:\quad &&\mbox{Minimize}\quad\quad\quad\quad\quad\quad\quad M\\
&&\mbox{Subject to:}\quad f(t)=\sum_{k=1}^M a_k(t)\cos\theta_k(t),\quad  a_k(t)\cos\theta_k(t)\in \mathcal{D},\;\quad k=1,\cdots,M,\nonumber
\end{eqnarray}
or
 \begin{eqnarray}
P_\delta:\quad &&\mbox{Minimize}\quad\quad\quad\quad\quad\quad\quad M\\
&&\mbox{Subject to:}\quad \|f(t)-\sum_{k=1}^M a_k(t)\cos\theta_k(t)\|_{l^2}\le \delta,\quad  a_k(t)\cos\theta_k(t)\in \mathcal{D},\;\quad k=1,\cdots,M,
\nonumber
\end{eqnarray}
if the signal has noise with noise level $\delta$.

After this optimization problem is solved, we get a very clear time-frequency representation:
\begin{eqnarray}
 \mbox{Frequency:}\quad \omega_k(t)=\theta_k'(t),\quad \mbox{Amplitude}: \quad a_k(t).
\end{eqnarray}

\subsection{Nonlinear matching pursuit}
\label{NMP}

The above optimization problem can be seen as a nonlinear $L^0$ minimization problem. 
Thanks to the recent developments of the compressed
(compressive)
sensing, it is now well known that there are two kinds of methods to
solve a $L^0$ minimization problem: the basis pursuit and the matching
pursuit. Since the dictionary we adopt here has infinitely many elements,
the generalization of the basis pursuit is not so straightforward. But
the idea of the matching pursuit can be generalized. Applying the idea
of the matching pursuit to our problem, we obtain the following algorithm:

\begin{itemize}
\item $r^0=f(t)$.
\item[Step 1:] Solve the following $l^1$-regularized nonlinear least-square problem $(P_2)$:
\begin{eqnarray}
\label{opt-greedy}
P_2:\quad &&\mbox{Minimize}\quad 
\gamma \|\widehat{a}_{k}\|_{l^1} +
\|r^{k-1}-a_{k}\cos\theta_{k}\|_{l^2}^2\\
&&\mbox{Subject to:}\quad\theta'_k\ge 0,
\quad a_k(t)\in V(\theta_k), \nonumber.
\end{eqnarray}
where $\gamma >0$ is a regularizing parameter and $\widehat{a}_k$ is the representation of $a_k$ in the $V(\theta_{k})$ space.
\item[Step 2:] Update the residual
\begin{eqnarray}
r^{k}=f(t)-\sum_{j=1}^{k}a_{j}(t)\cos\theta_{j}(t).
\end{eqnarray}
\item[Step 3:] If $\|r^{k}\|_{l^2}<\epsilon_0$, stop. Otherwise, go to Step 1.
\end{itemize}

In the first step of the above iterative algorithm, unlike the standard matching pursuit
, we solve a
$l^1$ regularized nonlinear least-square problem $P_2$. To solve this problem, we
use the following Gauss-Newton type method:
\begin{itemize}
\item $\theta_k^0=\theta_0$.
\item[Step 1:] Solve the following $l_1$ regularized least-square problem:
\begin{eqnarray}
\label{opt-linear-ls-arctan}
P_{2,l_2}\quad&&\mbox{Minimize}\quad
\gamma (\|\widehat{a}_k^{n+1}\|_{l^1} + \|\widehat{b}_k^{n+1}\|_{l^1}) + 
\|r^{k-1}-a_{k}^{n+1}(t)\cos\theta_{k}^n(t)-b_k^{n+1}(t)\sin\theta_k^n(t)\|_{l^2}^2 \nonumber \\
&&\mbox{Subject to}\quad a_{k}^{n+1}(t),\;b_{k}^{n+1}(t)\in V(\theta_{k}^n).\nonumber
\end{eqnarray}
where $\widehat{a}_k^{n+1}, \widehat{b}_k^{n+1}$ are the representations of $a_k^{n+1}, b_k^{n+1}$ in the $V(\theta_{k}^n)$ space.
\item[Step 2:] Update $\theta_k^n$,
\begin{eqnarray}
\theta_k^{n+1}=\theta_k^n-\lambda \arctan\left(\frac{b_k^{n+1}}{a_k^{n+1}}\right),
\end{eqnarray}
where $\lambda\in [0,1]$ is chosen to make sure that
$\theta_k^{n+1}$ is a monotonely increasing function.
\begin{eqnarray}
\lambda=\max\left\{\alpha\in [0,1]: \frac{d}{dt}\left(\theta_k^n-\alpha \arctan\left(\frac{b_k^{n+1}}{a_k^{n+1}}\right)
\right)\ge 0\right\}.
\end{eqnarray}
\item[Step 3:] If $\|\theta_k^{n+1}-\theta_k^n\|_{l^2}<\epsilon_0$, stop. Otherwise, go to Step 1.
\end{itemize}

In the first step of above algorithm, we solve a $l_1$ regularized least square problem. 
The $l^1$ regularization tends to stabilize the least 
square problem using an overcomplete Fourier basis. It also favors a
sparse decomposition of the data.

\subsection{A fast algorithm for periodic data}

In the iterative algorithm given in previous section, we need to solve a $l_1$ regularized least square problem in
each step. This is the most expensive part of the algorithm especially
when the number of the data points is large. 
In this subsection, we consider the special case when the data are periodic. In this case, we can introduce an
method based on Fast Fourier Transform(FFT) instead of solving the $l_1$ regularized least square problem.

One big advantage for periodic data is that we can use the standard Fourier basis to construct 
the $V(\theta)$ space in the following way instead of the overcomplete Fourier basis given in \myref{2-fold-fourier}. 
\begin{eqnarray}
\label{def-V}
  V(\theta)=\mbox{span}\left\{\cos\left(\frac{k\theta}{L_\theta}\right), \sin\left(\frac{l\theta}{L_\theta}\right):
k=0,\cdots,\lambda L_\theta
,\; l=1,\cdots,\lambda L_\theta \right\},
\end{eqnarray}
where $\lambda\le 1/2$ is a parameter to control the smoothness of
functions in $V(\theta)$ and $L_\theta=(\theta(1)-\theta(0))/2\pi$ is a positive integer.

The standard Fourier basis is orthogonal to each other, so the $l_1$ 
regularized term is not necessary in this case. Then, the least-square 
problem that we need to solve in the iterative algorithm is described 
below (recall that we set $\gamma = 0$):
\begin{eqnarray}
\label{ls-only}
&&\mbox{Minimize}\quad\|r(t)-a(t)\cos\theta(t)-b(t)\sin\theta(t)\|_{l^2}^2\\
&&\mbox{Subject to}\quad a(t),\;b(t)\in V(\theta)\nonumber.
\end{eqnarray}
In order to simplify the notations, we drop the subscript and
superscript here.


Notice that in the iterative process, the derivative of the
phase function $\theta(t)$ is always monotonically
increasing. Thus, we can use $\theta(t)$ as a new coordinate.
In this new coordinate, $\cos\theta$,$\sin\theta$ and the basis of $V(\theta)$
are simple Fourier modes, then the least-square problem
can be solved by using the Fourier Transform. More specifically,
for a given increasing phase function $\theta(t)$, we have the
following algorithm to solve the least-square problem \myref{ls-only}
approximately:

\begin{itemize}
\item[Step 1:] Apply an interpolation method to obtain the
value of $r(t)$ at a uniform mesh in the $\theta$-coordinate,
$r_\theta(\theta_j)$:
  \begin{eqnarray}
    r_\theta(\theta_j)=\mbox{Interpolate}\;\left(\theta(t_i), r,\theta_j\right),
  \end{eqnarray}
where $\theta_j, \; j=1,\cdots,N$ are uniformly distributed in the
$\theta$-coordinate.

\item[Step 2:] Apply the low-pass filter $\chi(k)$ to the Fourier Transform of $r_\theta$ 
to compute the envelope $a(t)$ and $b(t)$:
\begin{eqnarray}
a(t)&=&2Re\left\{\mathcal{F}^{-1}\left[\widehat{r}_\theta(k+1)\chi(k)\right]\right\},\\
b(t)&=&2Im\left\{\mathcal{F}^{-1}\left[\widehat{r}_\theta(k+1)\chi(k)\right]\right\},
\end{eqnarray}
where $\chi(k)$ will be given later.
\end{itemize}

The low-pass filter $\chi(k)$ is determined by the choice of $V(\theta)$.
The definition of $V(\theta)$ in \myref{def-V} implies that
$\chi(k)$ a stair function given as following:
\begin{eqnarray}
  \label{cutoff-jump}
\chi(k)=\left\{
  \begin{array}{cl}
    1,& -\lambda<k<\lambda\\
0,& \mbox{otherwise}.
  \end{array}
\right.
\end{eqnarray}

 In the theoretical analysis in the subsequent section, 
 we will see that the stair function is not a good choice as a filter.
We can define a different space for $V(\theta)$ by choosing an
appropriate $\chi(k)$. This opens up many choices for $V(\theta)$. In
this paper, we choose the following low-pass filter $\chi(k)$
to define $V(\theta)$:
\begin{eqnarray}
  \label{cutoff-cosine}
\chi(k)=\left\{
  \begin{array}{cl}
    1+\cos\pi k,& -1<k<1\\
0,& \mbox{otherwise}.
  \end{array}
\right.
\end{eqnarray}



Now, by incorporating the FFT-based solver in our iterative algorithm,
we get the following FFT-based iterative algorithm:
\begin{itemize}
\item $\theta_k^0=\theta_0$.
\item[Step 1:] Interpolate $r(t)$ to a uniform mesh in the
 $\theta^n$-coordinate to get $r_{\theta^n}(\theta^n_j)$:
  \begin{eqnarray}
    r_{\theta^n}(\theta^n_j)=\mbox{Interpolate}\;\left(\theta^n(t_i), r,\theta^n_j\right),
  \end{eqnarray}
where $\theta^n_j, \; j=1,\cdots,N$ are uniformly distributed in the
 $\theta^n$-coordinate.

\item[Step 2:] Apply $\chi(k)$ to the Fourier Transform of
$r_{\theta^n}$ to compute $a^{n+1}$ and $b^{n+1}$ on the mesh of
the $\theta^n$-coordinate:
\begin{eqnarray}
a^{n+1}(\theta^n)&=&2Re\left\{\mathcal{F}^{-1}\left[\widehat{r}_{\theta^n}(k+1)\chi(k)\right]\right\},\\
b^{n+1}(\theta^n)&=&2Im\left\{\mathcal{F}^{-1}\left[\widehat{r}_{\theta^n}(k+1)\chi(k)\right]\right\}.
\end{eqnarray}

\item[Step 3:] Interpolate $a^{n+1}(\theta^n)$ and $b^{n+1}(\theta^n)$ back to the uniform mesh of $t$:
  \begin{eqnarray}
    a^{n+1}(t_i)=\mbox{Interpolate}\;\left(\theta^n_j, a^{n+1}(\theta^n_j),t_i\right),\\
    b^{n+1}(t_i)=\mbox{Interpolate}\;\left(\theta^n_j, b^{n+1}(\theta^n_j),t_i\right).
  \end{eqnarray}

\item[Step 4:] Update $\theta^n$ in the $t$-coordinate:
\begin{eqnarray}
\theta_k^{n+1}=\theta_k^n-\lambda \arctan \left(\frac{b_k^{n+1}}{a_k^{n+1}}\right),
\end{eqnarray}
where $\lambda\in [0,1]$ is chosen to make sure that
 $\theta_k^{n+1}$ is monotonically increasing:
\begin{eqnarray}
\lambda=\max\left\{\alpha\in [0,1]: \frac{d}{dt}\left(\theta_k^n-\alpha
\arctan \left(\frac{b_k^{n+1}}{a_k^{n+1}}\right)\right)\ge 0\right\}.
\end{eqnarray}
\item[Step 5:] If $\|\theta_k^{n+1}-\theta_k^{n}\|_2<\epsilon_0$, stop. Otherwise, go to Step 1.
\end{itemize}

In our implementation, instead updating the phase function $\theta$,
we update the instantaneous frequency $\theta'(t)$:
\begin{eqnarray}
\label{dtheta}
  \left(\theta_k^{n+1}\right)'=\left(\theta_k^{n}\right)'+\lambda \Delta\theta_k^{n+1},\quad \quad \Delta\theta_k^{n+1}=\frac{a_k^{n+1}\left(b_k^{n+1}\right)'-
b_k^{n+1}\left(a_k^{n+1}\right)'}{\left(a_k^{n+1}\right)^2+\left(b_k^{n+1}\right)^2}.
\end{eqnarray}
The phase function can be reconstructed from the instantaneous
frequency by integration. The integration constant is set to 0
since this constant does not change the instantaneous frequency.

In the formula \myref{dtheta} to update $\theta$, at the points where
the denominator $\left(a_k^{n+1}\right)^2+\left(b_k^{n+1}\right)^2$ is small, 
the error maybe amplified, then our algorithm may become unstable. In the real computation,
we first set a threshold value $\alpha$, at the points where the denominator is smaller than $\alpha$,
the value of $\Delta \theta$ is interpolated by the value $\Delta \theta$ at other points, its original 
value is not used. In the computations of this paper, $\alpha$ is set to be $0.1$.

Above algorithm is based on the Gauss-Newton type iteration. It is known that this kind of iterations is sensitive to the initial value. It is not 
practical to assume that we can get a good initial guess especially 
when the signal is polluted by noise. In order to abate the dependence 
on the initial guess, we use the following procedure to gradually improve
our approximation to the phase function so that it converges to the correct
value. 

First, for a given phase function $\theta$, we define a set of spaces, 
such that
\begin{eqnarray}
  V_1(\theta)\subset V_2(\theta)\subset \cdots \subset V_K(\theta)= V(\theta)
\end{eqnarray}
In the computation, we first limit $\Delta \theta$ in corresponding $V_1(\theta)$ space, do the iteration until converge, then relax the restriction on $\Delta \theta$ to make it in $V_2(\theta)$, repeat the iteration until 
converge. Repeat this process until the algorithm converge with the restriction of $\Delta \theta$ in $V(\theta)$.
In the real computations, this process converge even from very rough initial guess although we can not prove that.

This procedure is also very easy to implement. First, we apply a narrow low-pass filter on $\Delta \theta$, then make
the filter broader and broader until it approach the filter given in \myref{cutoff-cosine}.

In the above algorithm, we need to perform the Fourier Transform.
In general, this works well only for periodic data. For a
non-periodic signal with good scale separation property, we can 
extend the signal by a mirror reflection and
treat the extended signal as a periodic signal. The result is still
quite reasonable. But for nonperiodic data with poor scale separation
property, the $l^1$-regularized nonlinear matching pursuit is necessary
to reduce the end effect, as we will see in Sections 4 and 5. 

The initial guess of
$\theta$ can be generated by other time-frequency analysis methods,
such as the synchrosqueezed wavelet transforms \cite{DLW11}.
In the following numerical examples, we obtain our initial guess using
a simple approach based on the Fourier Transform. More precisely, by
estimating the wavenumber by which the high frequency components are
centered around, we can obtain a reasonably good initial guess for
$\theta$. The initial guess for $\theta$ obtained in this way is a
linear function. As we will see in the following numerical examples,
even with these relatively rough initial guesses for $\theta$, our
algorithm still converges to the right answer with accuracy determined
by the noise level.

Last point we want to remark is that our method has a close connection
to the wavelet transform.  In some sense, our method is equivalent to 
employ 
wavelet transform in the $\theta$-coordinate in each step. The low-pass filter $\chi(k)$ used in our method
can be related with the scale function in the multiresolution analysis. And space $V(\theta)$ can also be
constructed following the idea of multiresolution analysis. The relation between our method and the wavelet
transform will be studied more systematically in our future papers. 

\section{Numerical results}
In this section, we will perform extensive numerical studies to 
demonstrate the effectiveness of our nonlinear matching pursuit method.
First we will present numerical results for the FFT-based algorithms
for periodic data or data with a good scale separation property (see
section \ref{analysis} for the definition of the scale separation 
property). In the second section, we will present numerical results for
the $l^1$ regularized nonlinear matching pursuit which gives reasonably
accurate decompositions for non-periodic data and even 
for under-sampled data or data with missing information in some
physical domain.

\subsection{Numerical results for the FFT-based algorithms}

In this section, we will present a number of numerical experiments to
demonstrate the accuracy and robustness of our method. We also compare
the performance of our method with that of EMD or EEMD. A main focus of
our numerical study is the robustness of the decomposition to signals
that are polluted with a significant level of noise. When the signal
is free of noise, we observe that the performance of our method
is comparable to that of the EMD/EEMD method. However, when the
signal is polluted by noise with a significant noise-to-signal ratio,
our nonlinear matching pursuit method tends to give better
performance than that of the EMD/EEMD method.

Throughout this section, we denote $X(t)$ as white noise with zero
mean and variance $\sigma^2=1$. The Signal-to-Noise Ratio (SNR,
measured in dB) is defined by
\begin{eqnarray}
  \mbox{SNR[dB]}=10\log_{10}\left(\frac{\mbox{var} f}{\sigma^2}\right).
\end{eqnarray}
We will apply our method to several different signals with
increasing level of difficulty.

\vspace{3mm}
\textbf{Example 1}
\vspace{3mm}
The first example that we consider is a simple non-stationary signal
consisting of a single IMF, which is given below
\begin{eqnarray}
\label{data-single}
f(t)=\cos(60\pi t+15\sin(2\pi t)) .
\end{eqnarray}
In Fig. \ref{single-IF}, we plot the original signal on the left column
and the instantaneous frequency on the right column. The curve with
the red color corresponds to the exact instantaneous frequency
and the one with the blue color corresponds to the one obtained by
by our method. The top row corresponds to the original signal without
noise, $f(t)$. The middle row corresponds to the same signal with a
moderate noise level ($f(t)+X(t)$, SNR=$-3.01$dB) and
the bottom row corresponds to the signal with large noise ($f(t)+3X(t)$,
SNR=$-12.55$dB). In the case when no noise is added, the instantaneous
frequency obtained by our method is almost indistinguishable from
the exact one, see the first row of Fig. \ref{single-IF}.
When the signal has noise, our method can still extract the
instantaneous frequency and corresponding IMF with reasonable accuracy,
see Fig. \ref{single-IF} and Fig. \ref{IMF-single}.

\begin{figure}

    \begin{center}

\includegraphics[width=0.45\textwidth]{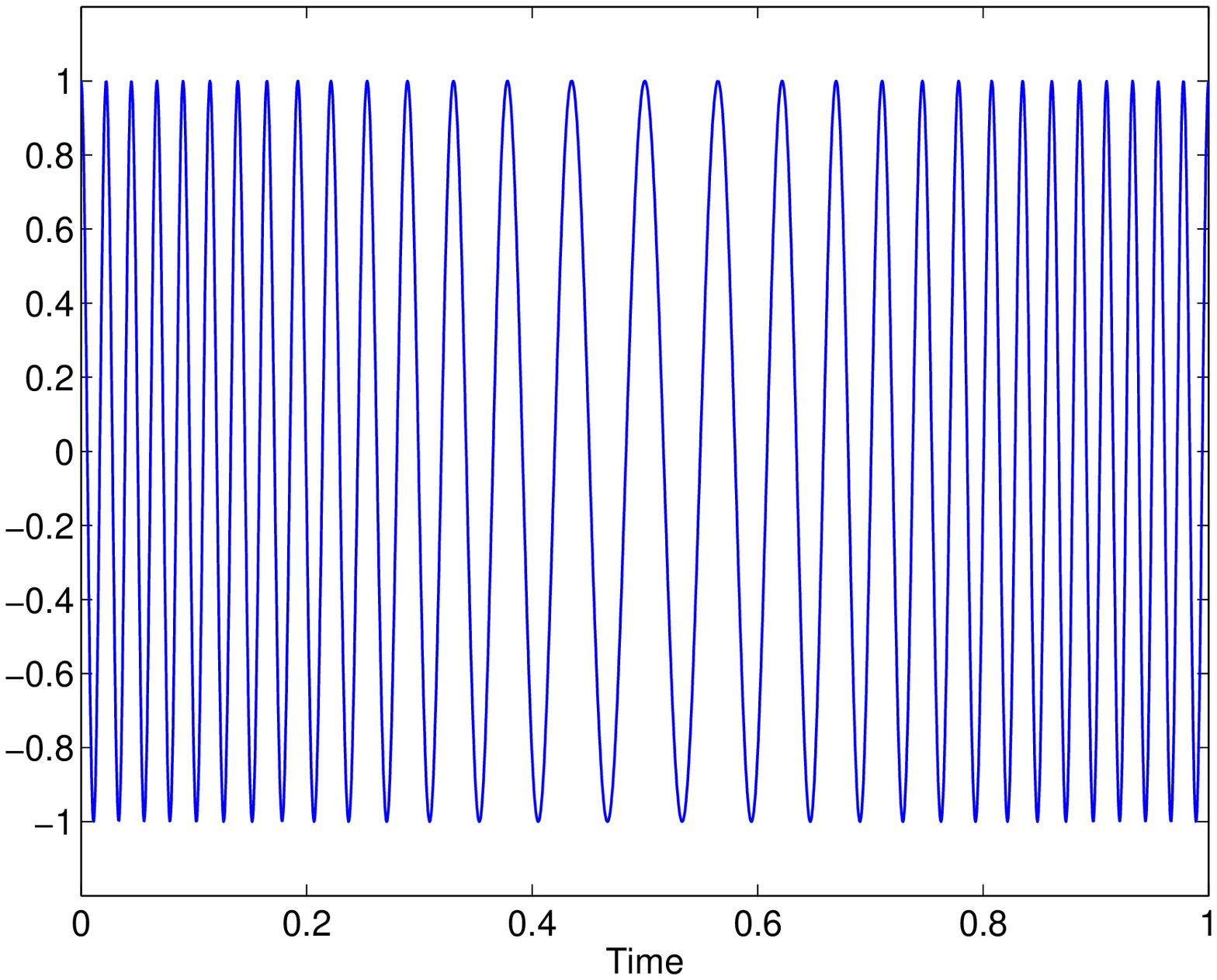}
\includegraphics[width=0.45\textwidth]{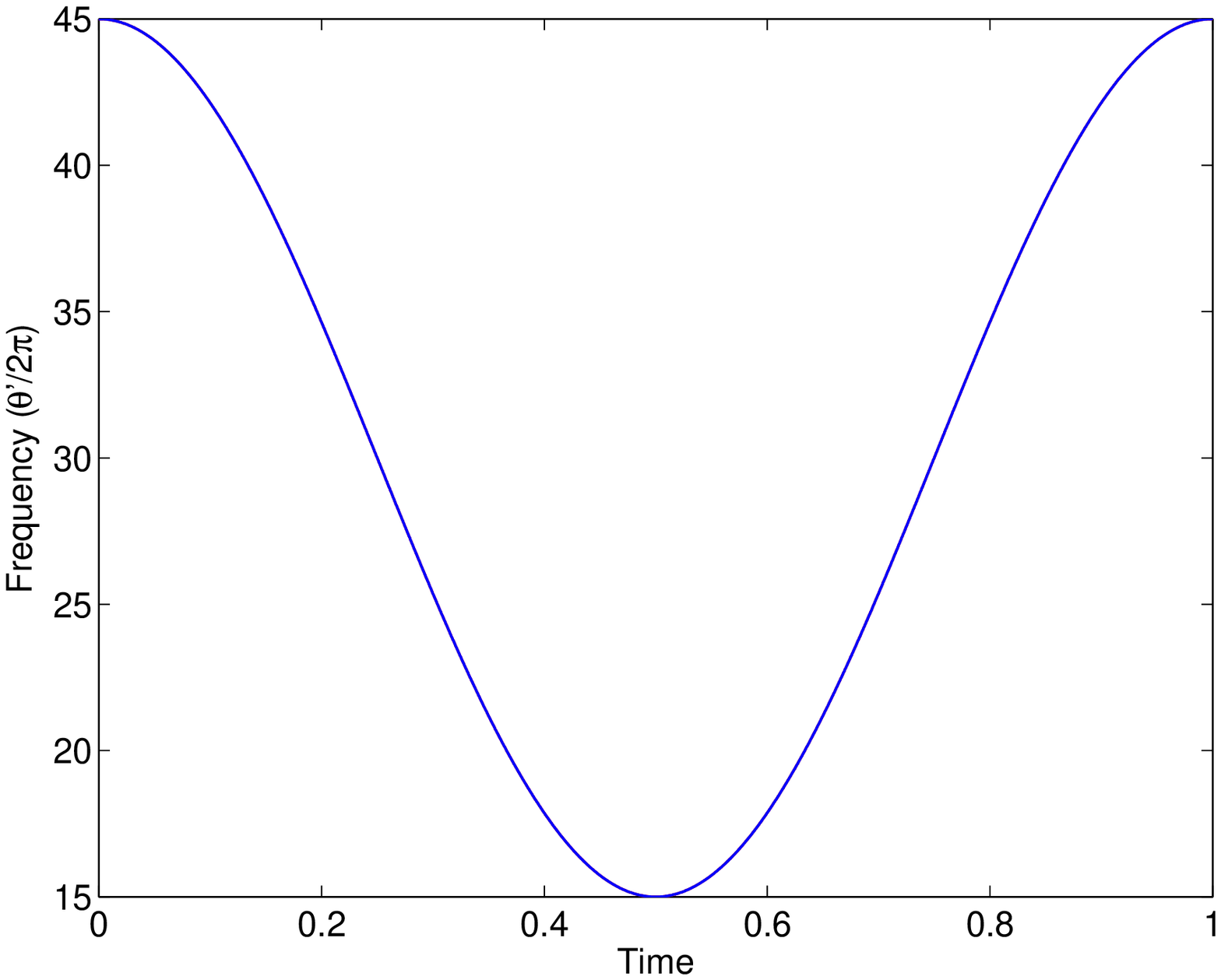}

\includegraphics[width=0.45\textwidth]{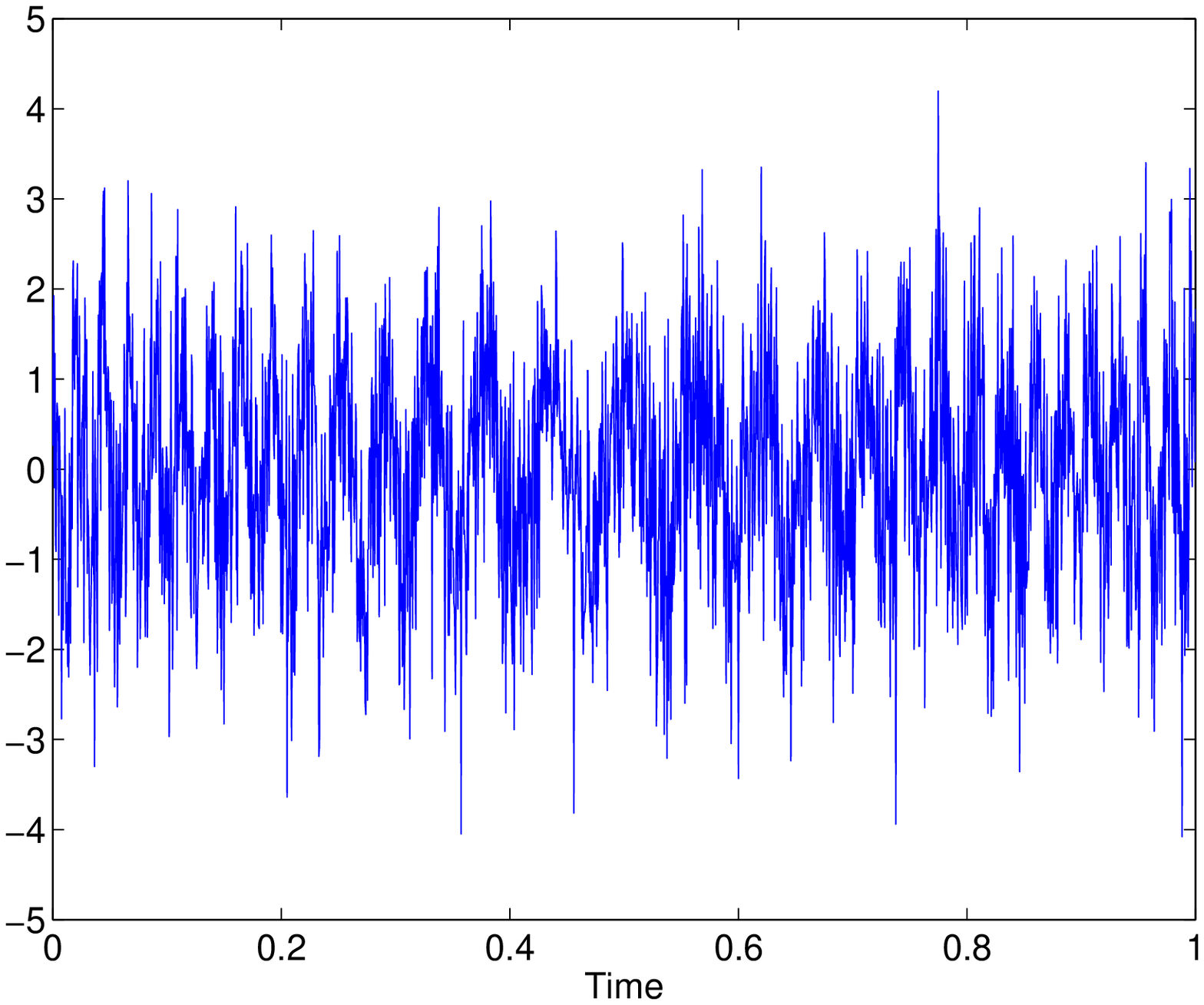}
\includegraphics[width=0.45\textwidth]{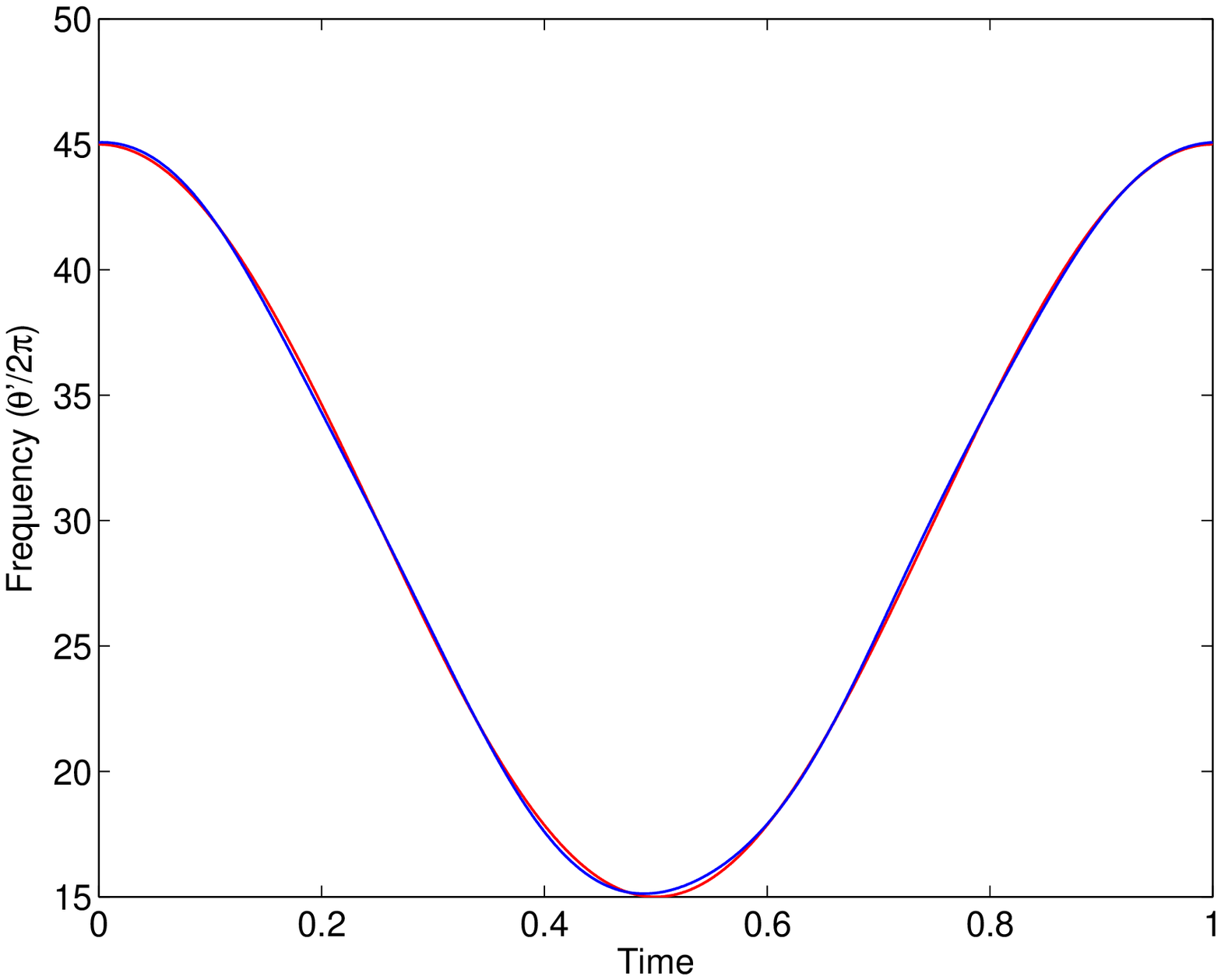}

\includegraphics[width=0.45\textwidth]{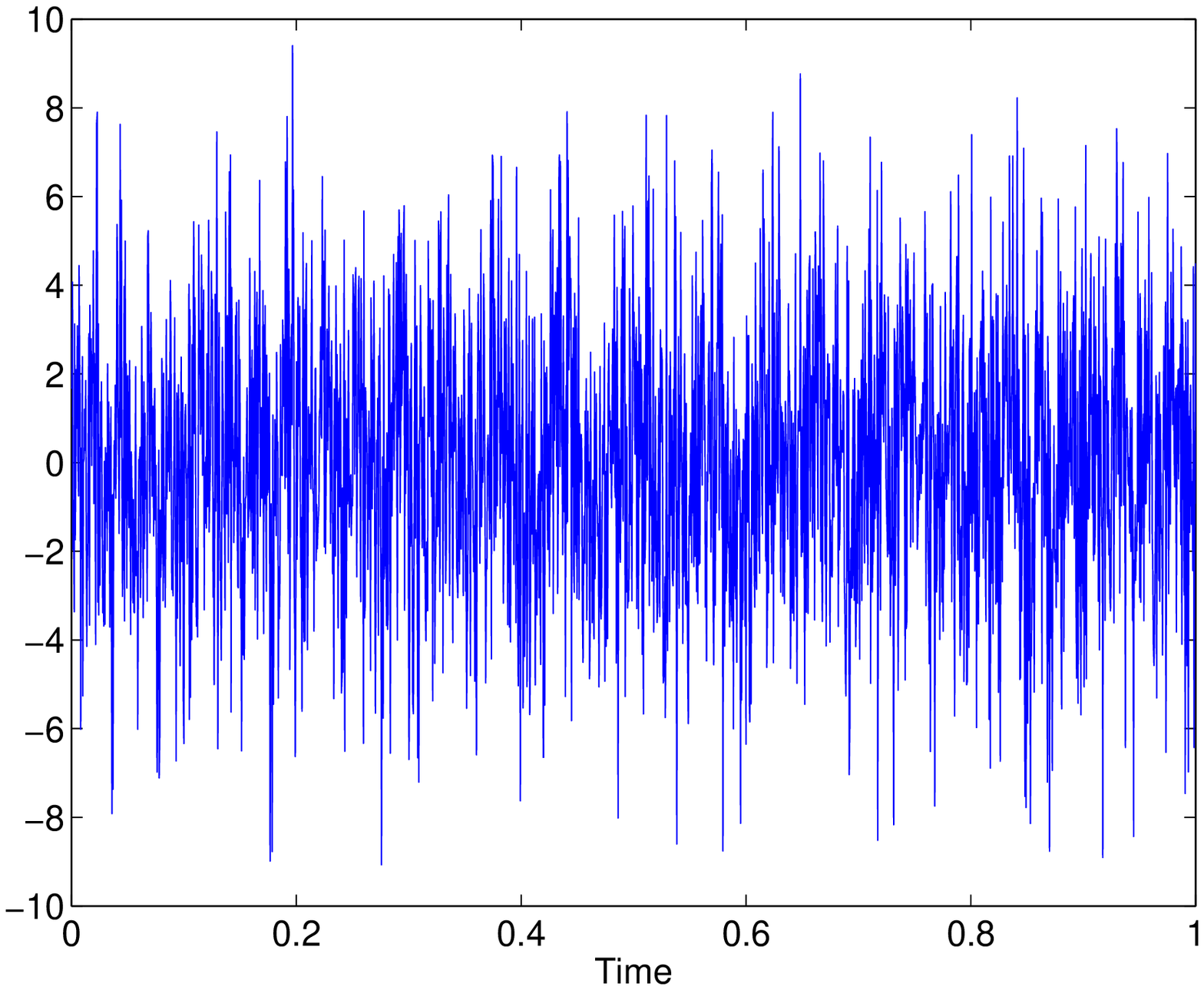}
\includegraphics[width=0.45\textwidth]{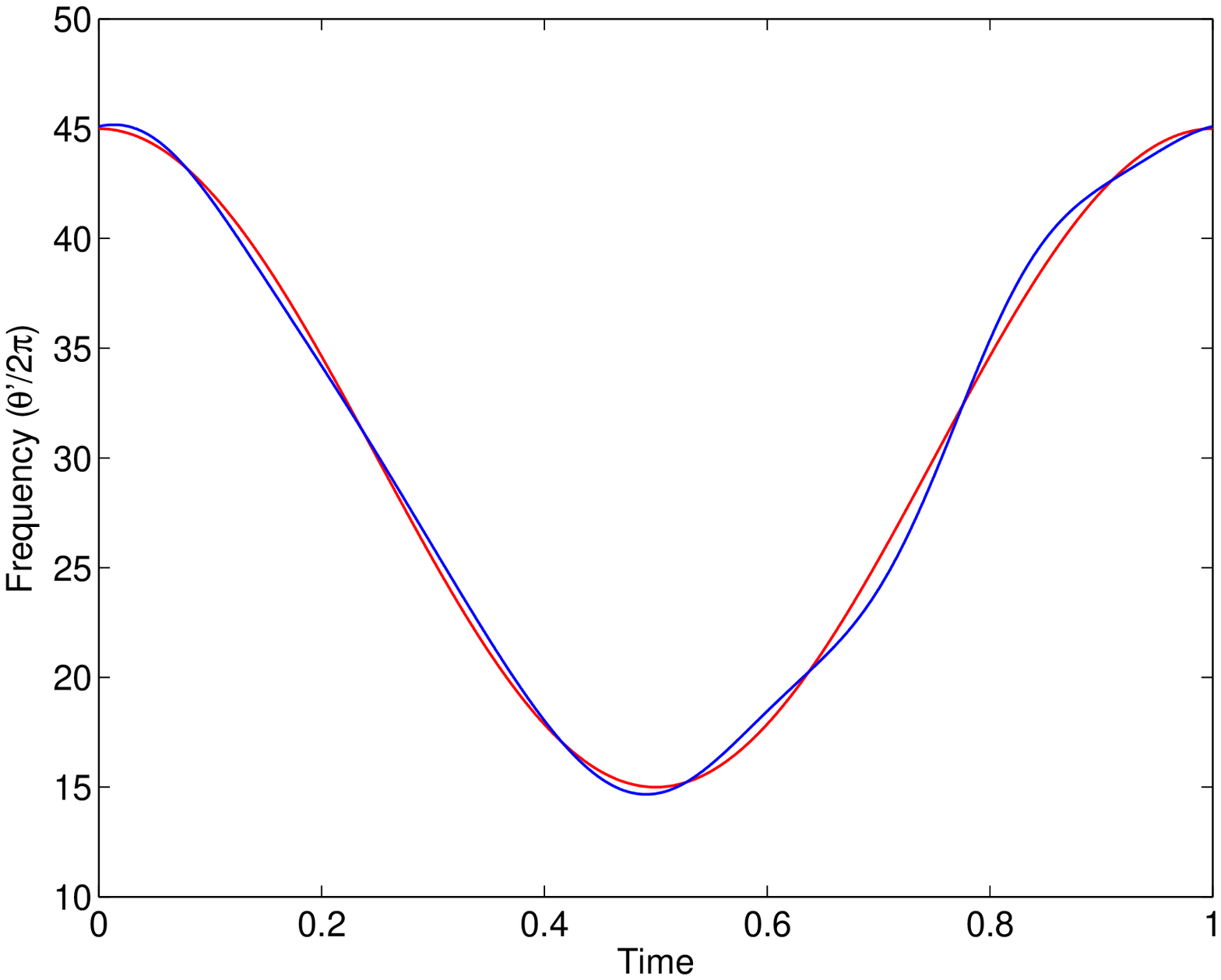}
     \end{center}
    \caption{  \label{single-IF}Top row: left: the original
signal defined by \myref{data-single} without noise; right:
Instantaneous frequencies;
red: exact frequency; blue: numerical results. Middle row:
the same as the top row except white noise $X(t)$ is added to the original
signal, the corresponding SNR is $-3.01$ dB. Bottom row: White noise
$3X(t)$ is added to the original
signal, the corresponding SNR is $-12.55$ dB.}
\end{figure}

In Fig. \ref{IMF-single}, we compare the IMFs extracted by our method
with those obtained by the EMD/EEMD method. For the signal without noise,
we use the EMD method to decompose the signal. For the signal with
noise, we use the EEMD method to decompose the signal. In the
EEMD approach, the number of ensembles is chosen to be 200 and the
standard deviation of the added noise is 0.2. In each ensemble, the
number of sifting is set to 8. Even though the signal we consider has
only a single IMF, the EEMD method still produces several IMFs. Among
different components of IMFs that are produced by the EEMD method, we
select the one that is closest to the exact IMF in $l^2$ norm and
display it in Fig. \ref{IMF-single}.

When the signal does not have noise, both our method and the EMD method
produce qualitatively the same result for this simple signal,
see the first row of Fig. \ref{IMF-single}. When noise is added,
the situation is quite different. The IMFs extracted by our method
still have reasonable accuracy. However, the IMF decomposed by EEMD
fails to capture the phase of the exact IMF in some region. As a
consequence, the accuracy of the instantaneous frequency obtained
by the EEMD method is very poor (not shown here).

\begin{figure}

    \begin{center}

\includegraphics[width=0.45\textwidth]{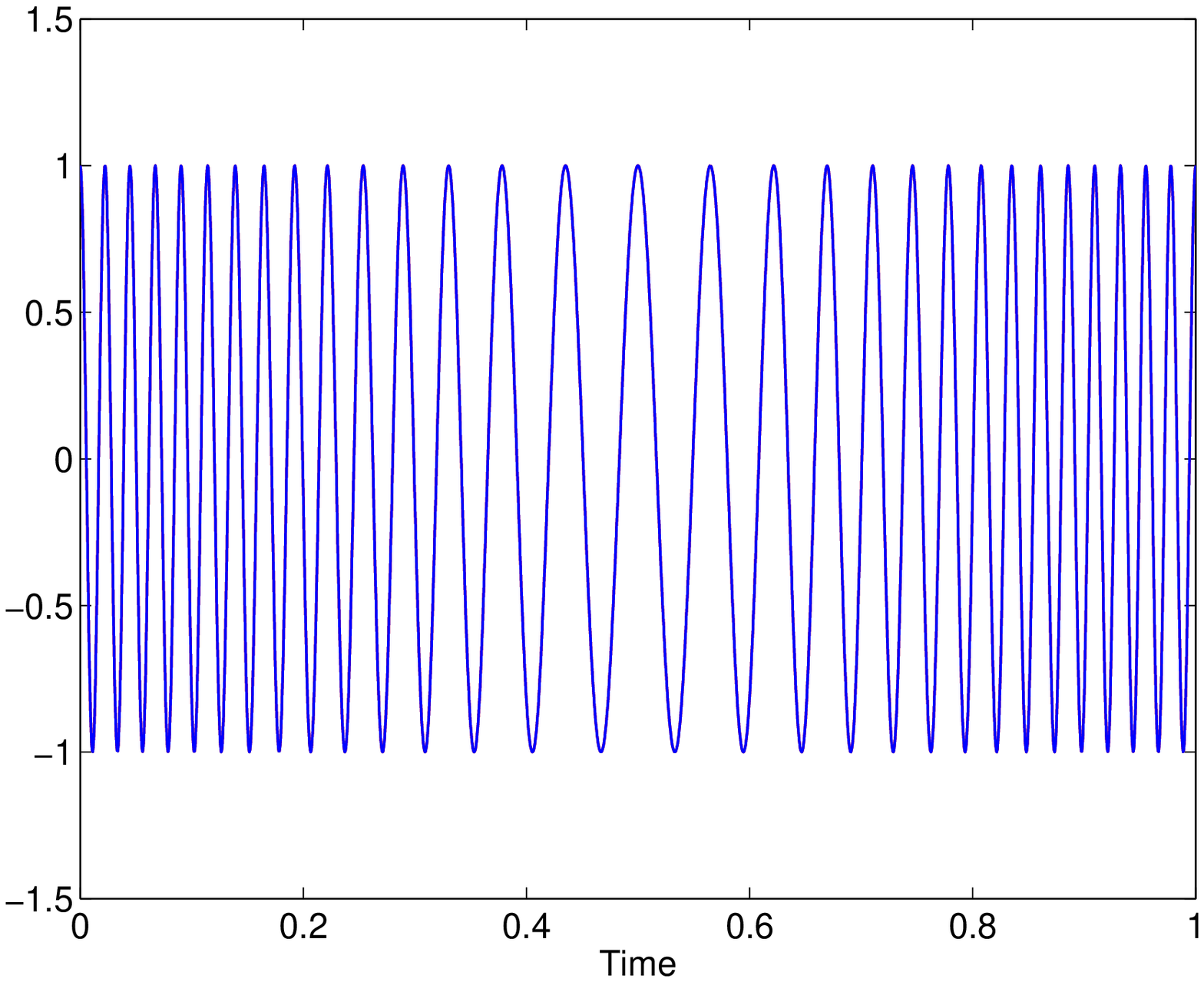}
\includegraphics[width=0.45\textwidth]{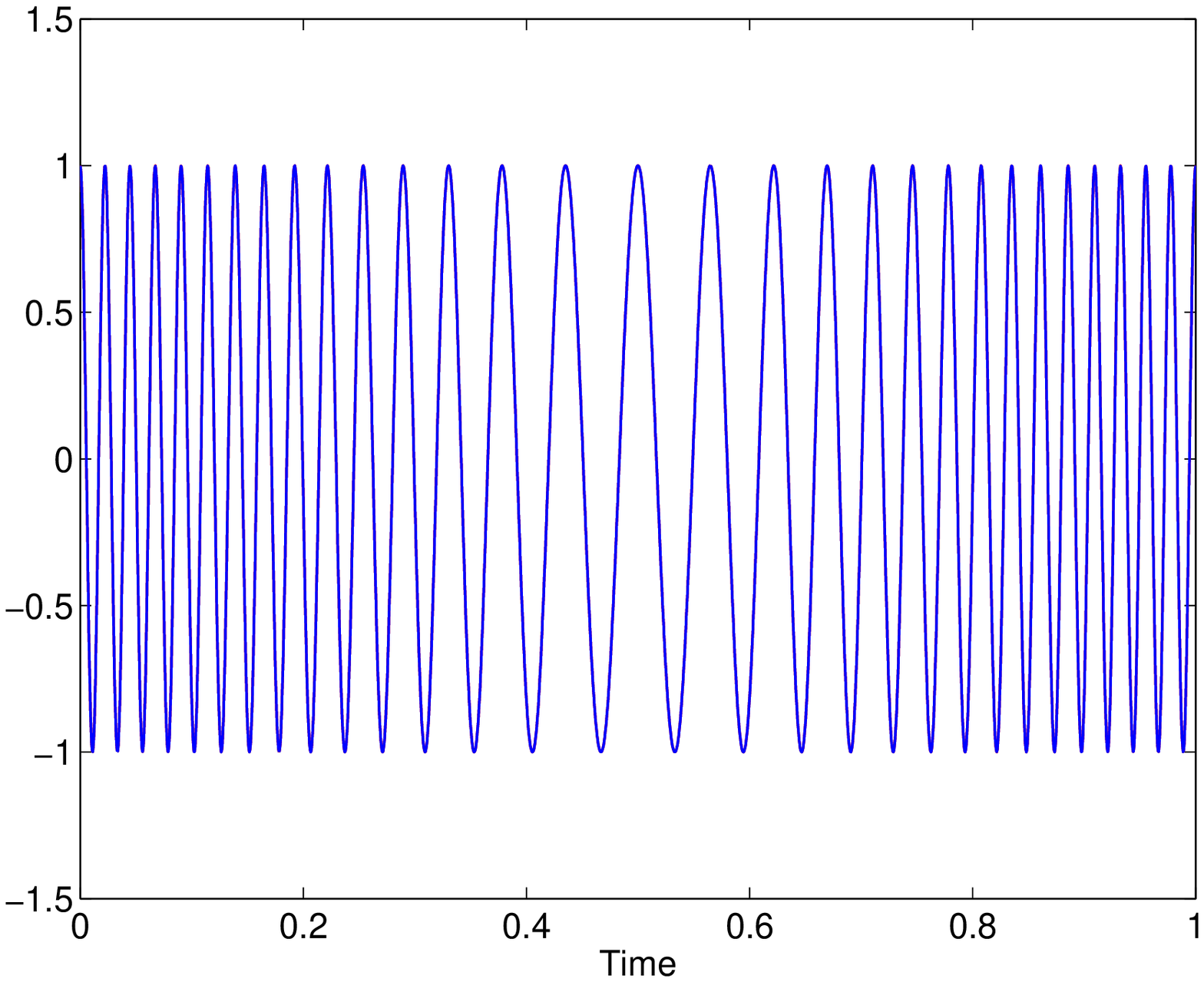}

\includegraphics[width=0.45\textwidth]{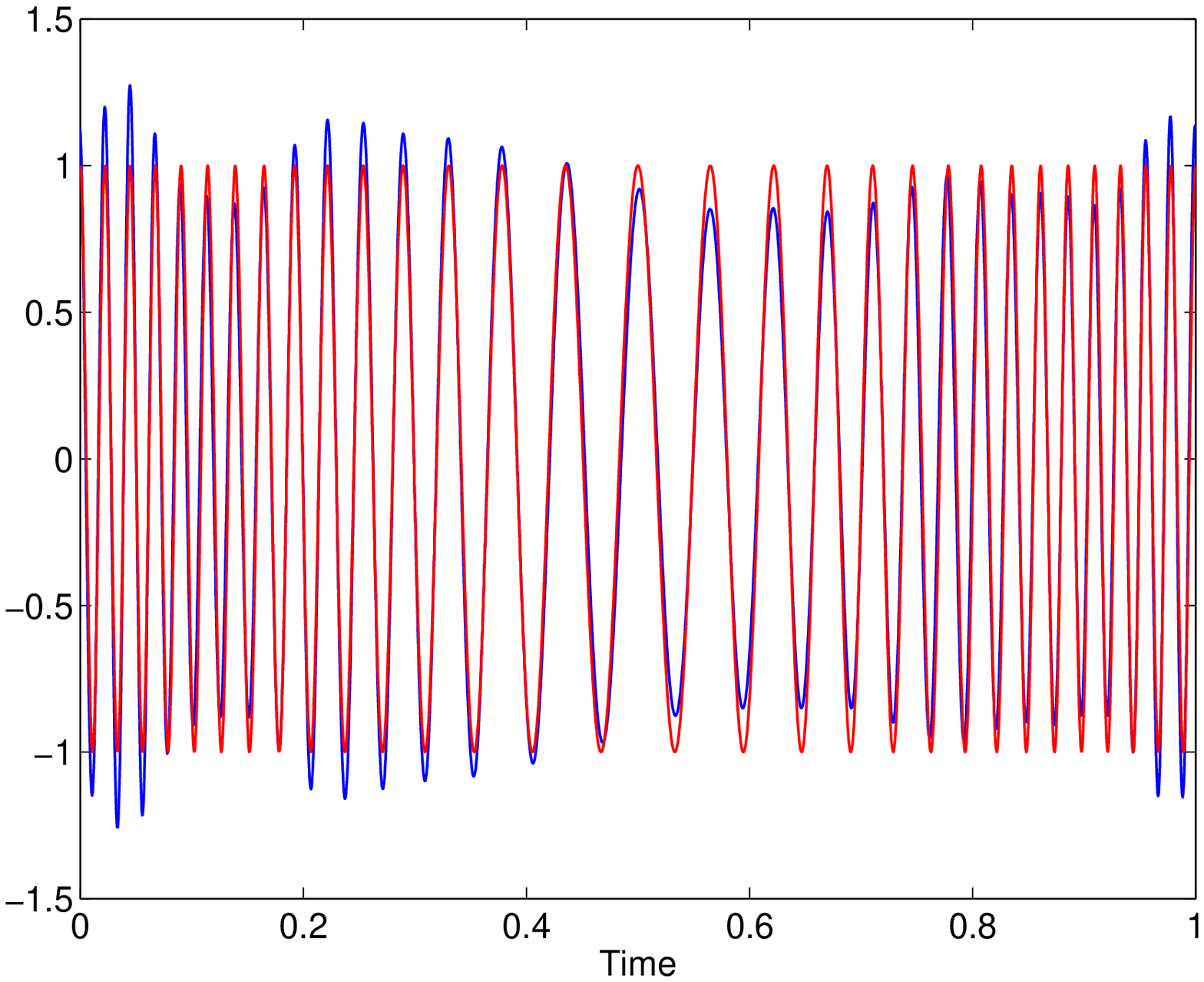}
\includegraphics[width=0.45\textwidth]{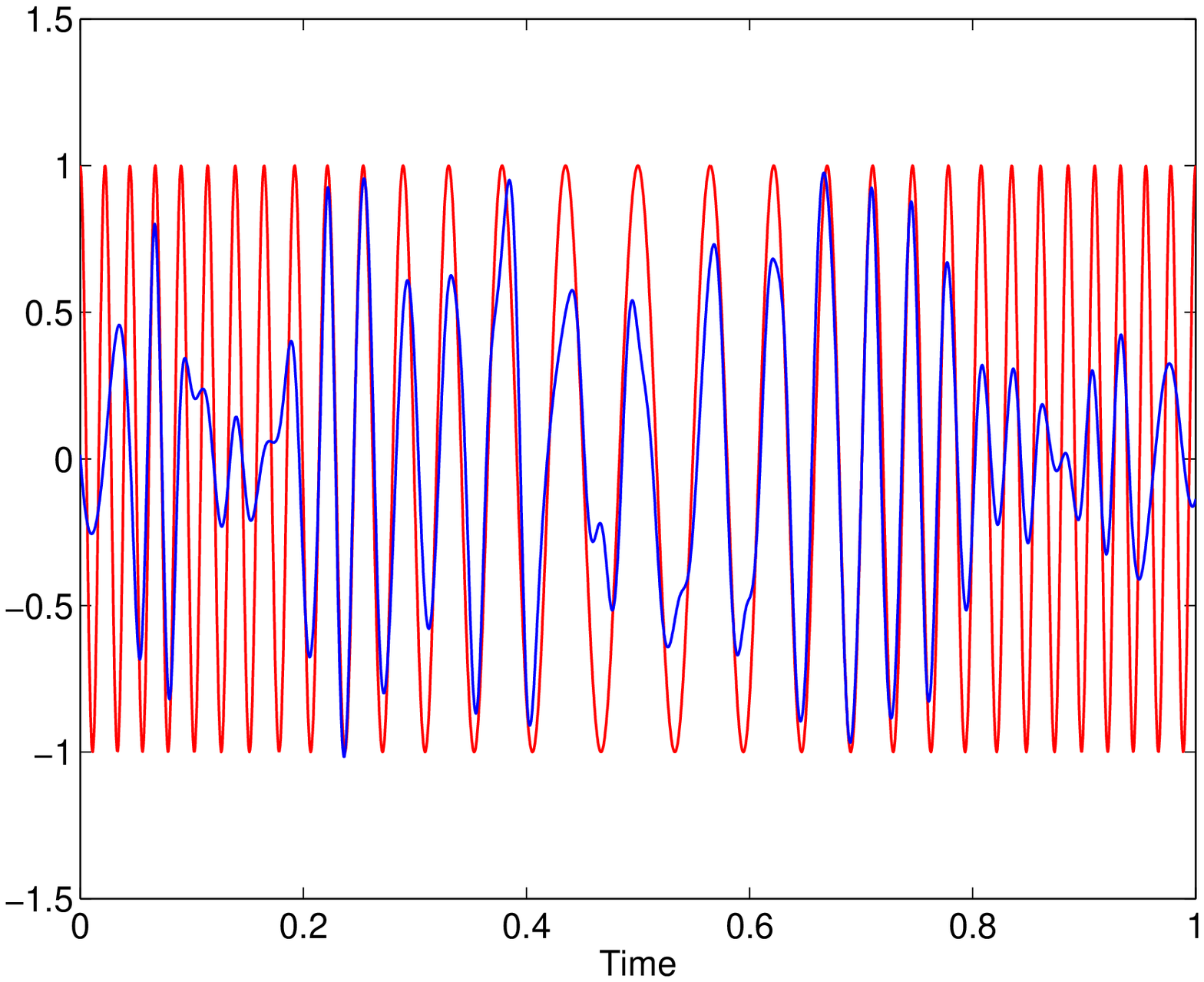}

\includegraphics[width=0.45\textwidth]{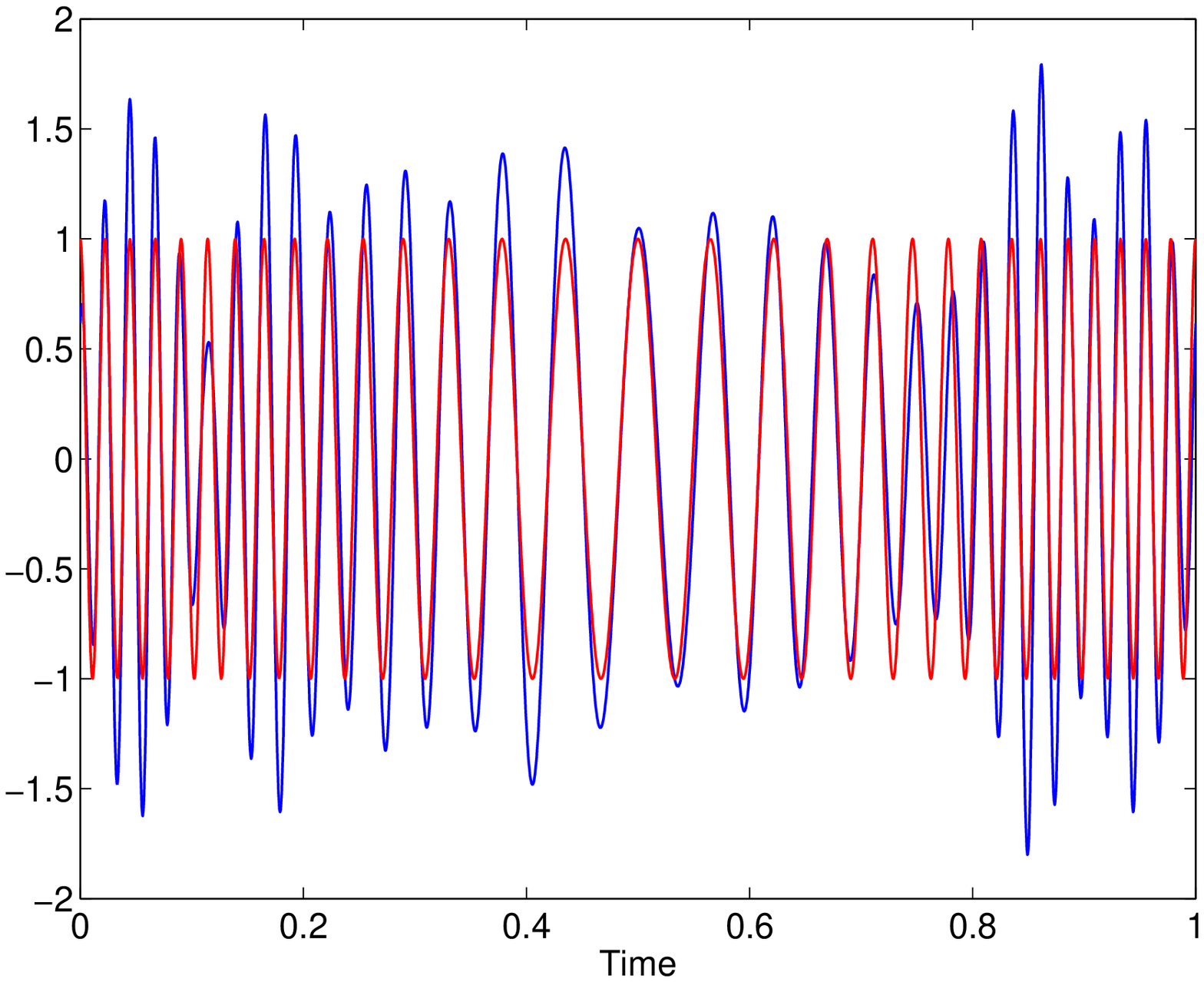}
\includegraphics[width=0.45\textwidth]{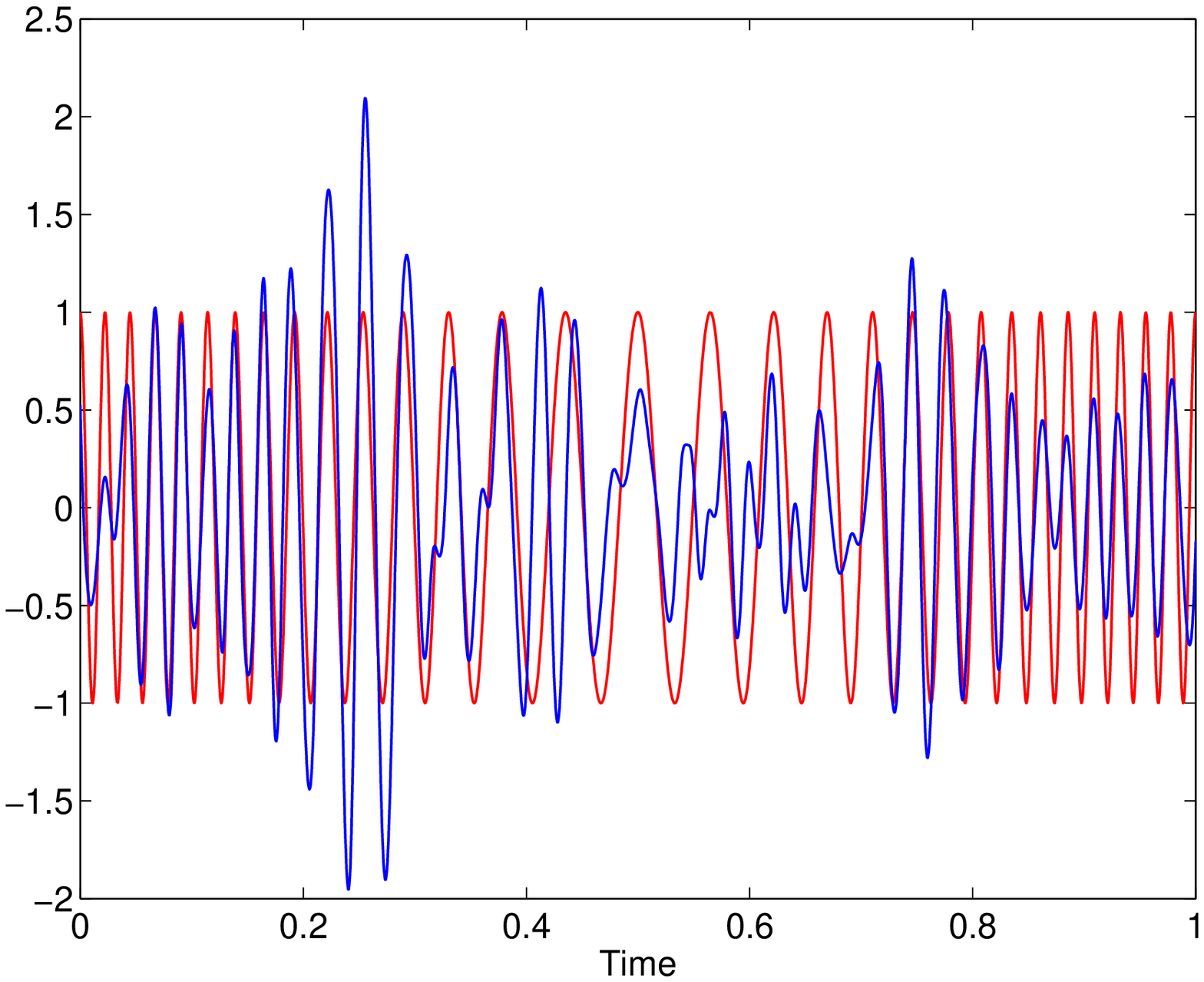}

     \end{center}
    \caption{ \label{IMF-single}The IMFs extracted by our method
and EMD/EEMD method. Left column: IMFs extracted by our method;
Right column: IMFs obtained by EMD/EEMD method. Top row: IMFs from $f(t)$; Middle row: IMFs from $f(t)+X(t)$; Bottom row:
IMFs from $f(t)+3X(t)$. $f(t)$ is defined in \myref{data-single}.}
\end{figure}

\vspace{3mm}
\textbf{Example 2}
\vspace{3mm}

Now, we consider a signal that consists of three IMFs.

\begin{eqnarray}
\label{data-tri}
f(t)&=&\frac{1}{1.5+\cos(2\pi t)}\cos(60\pi t+15\sin(2\pi t))+\frac{1}{1.5+\sin(2\pi t)}\cos(160\pi t+\sin(16\pi t))\nonumber\\
&&
+(2+\cos(8\pi t))\cos(140\pi(t+1)^2) .
\end{eqnarray}
In Fig. \ref{tri-IF}, we study the accuracy of the instantaneous frequencies obtained by our method with the exact instantaneous frequencies. The upper row corresponds to the signal without noise. As we can see, it is hard to tell any hidden structure from this signal even without noise. Our method recovers the three components of the instantaneous frequencies (blue) 
that match the exact instantaneous frequencies (red) extremely well.
They are almost indistinguishable from each other. In the case when noise is added to the original signal, the polluted signal looks really complicated and one can not recognize any hidden pattern from this polluted signal.
It is quite amazing that our method could still recover the three components of the instantaneous frequencies with accuracy comparable to the noise level, see the bottom row of Fig. \ref{tri-IF}.

\begin{figure}

    \begin{center}

\includegraphics[width=0.45\textwidth]{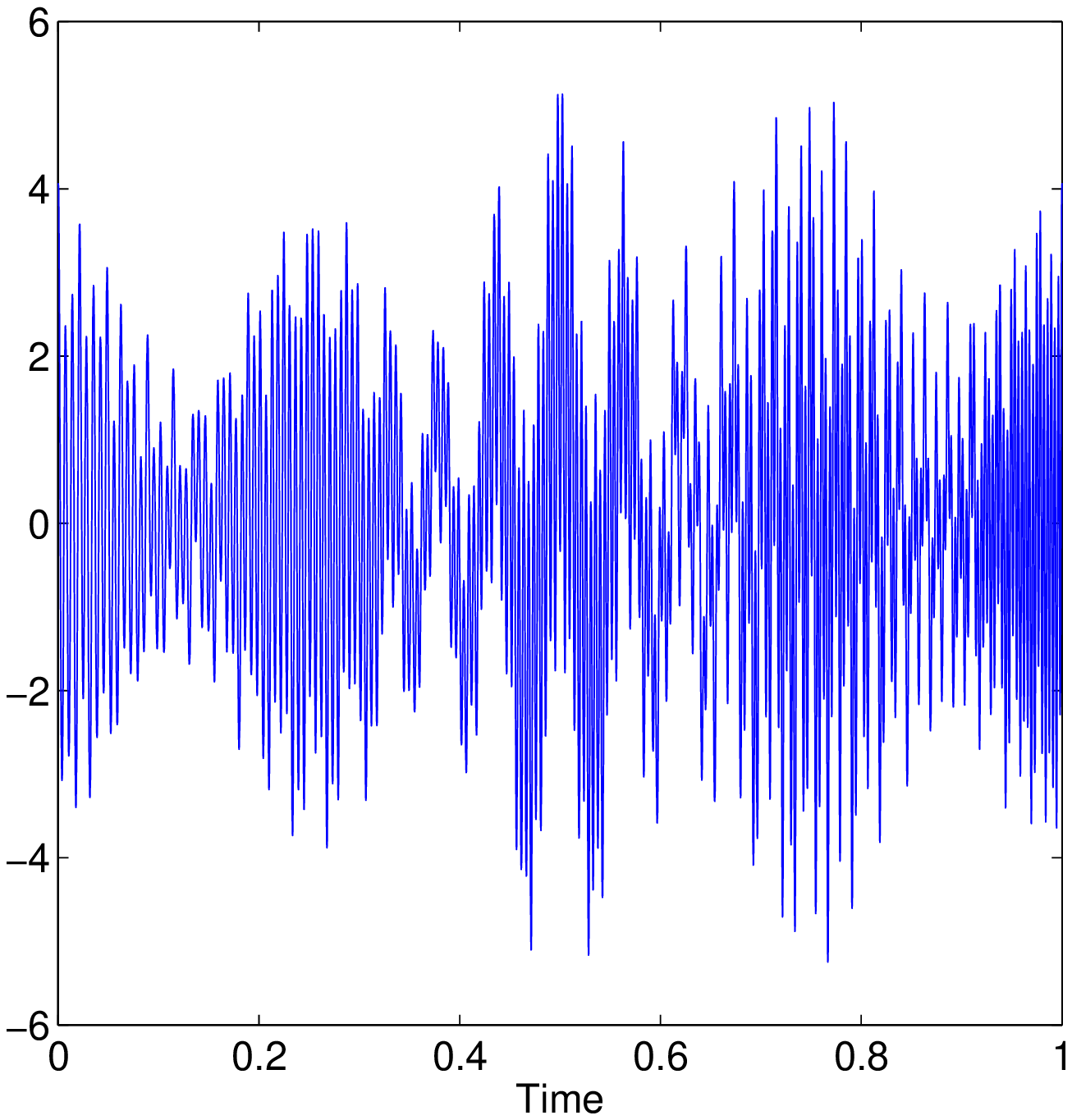}
\includegraphics[width=0.45\textwidth]{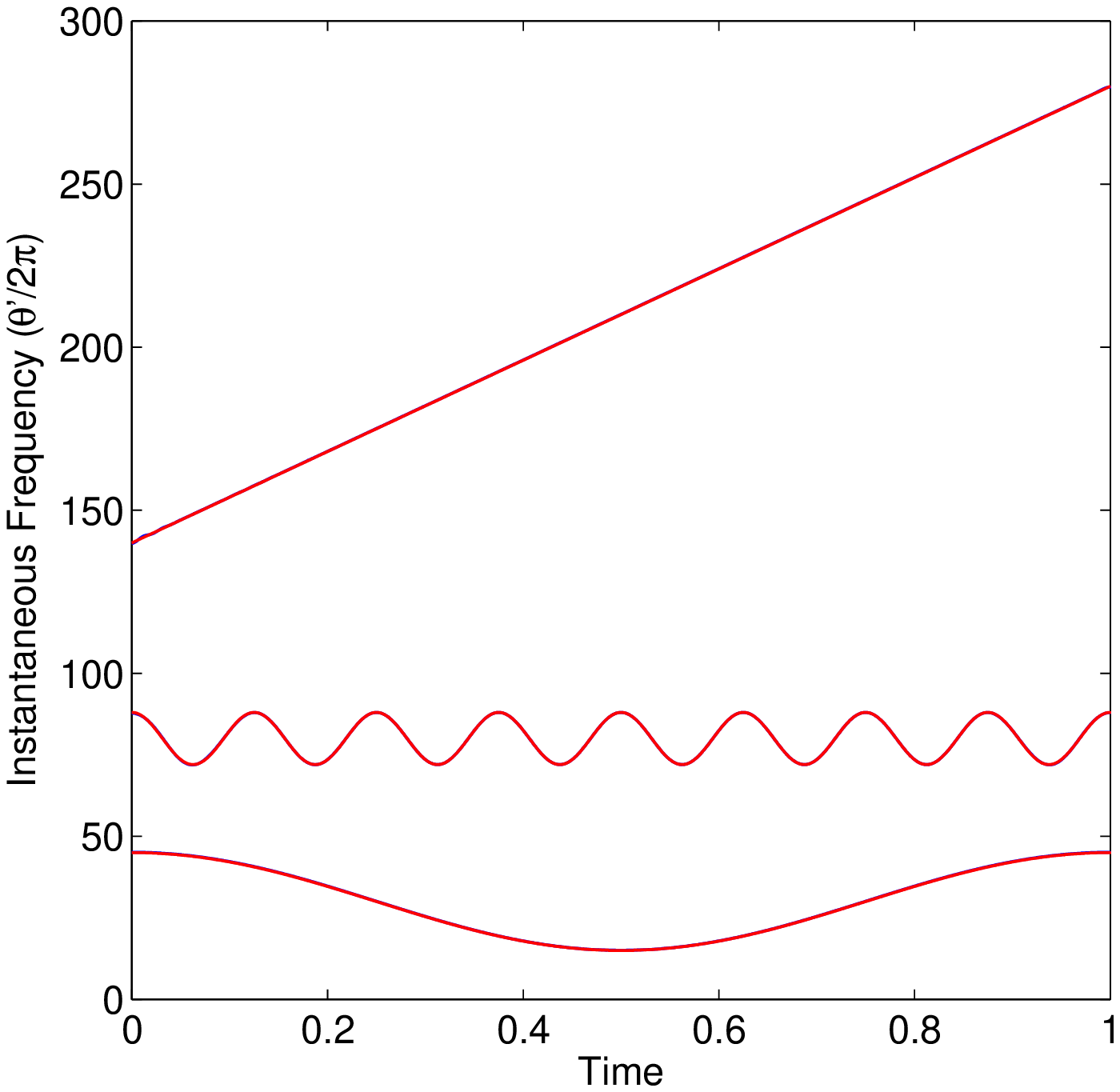}

\includegraphics[width=0.45\textwidth]{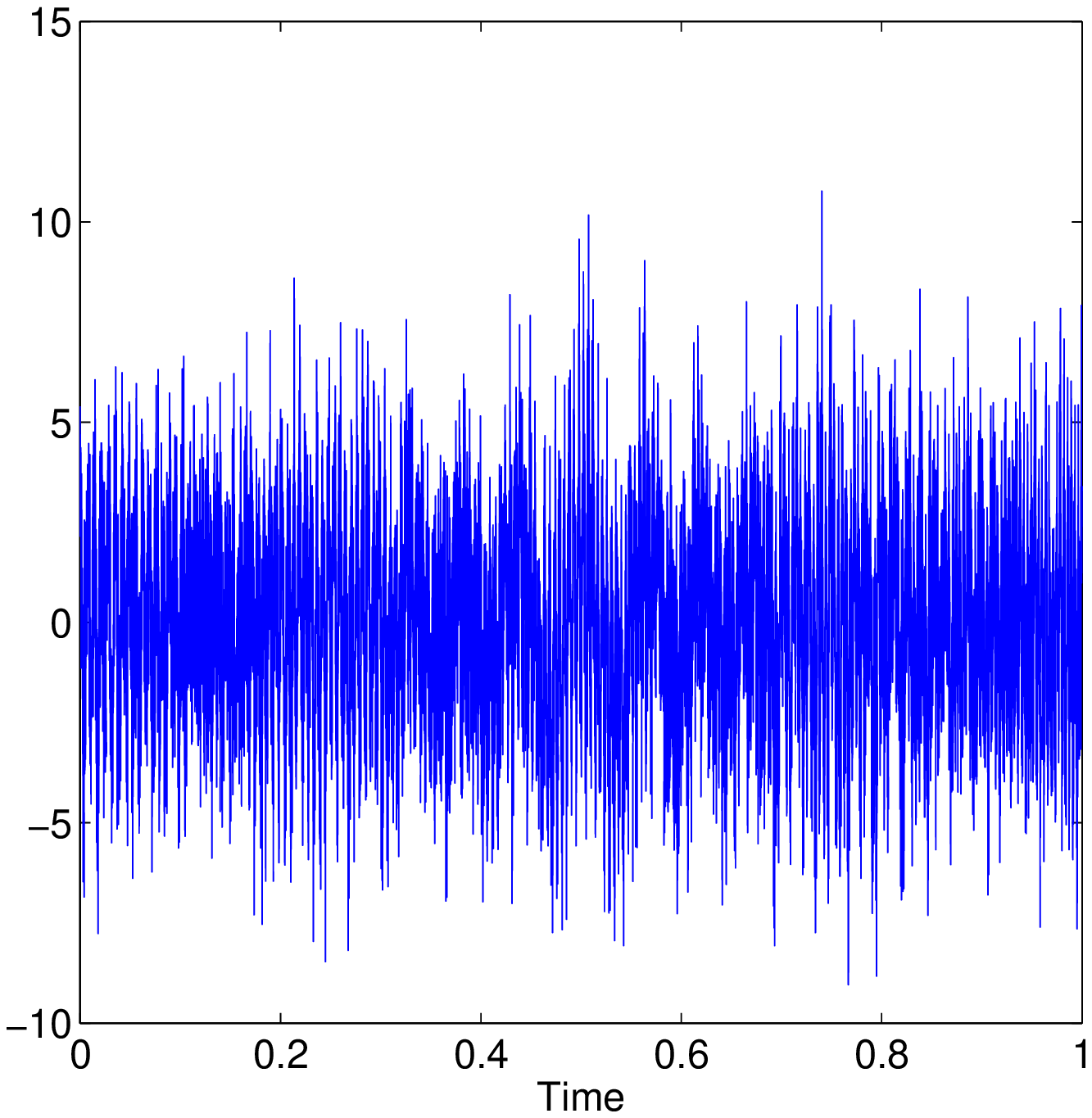}
\includegraphics[width=0.45\textwidth]{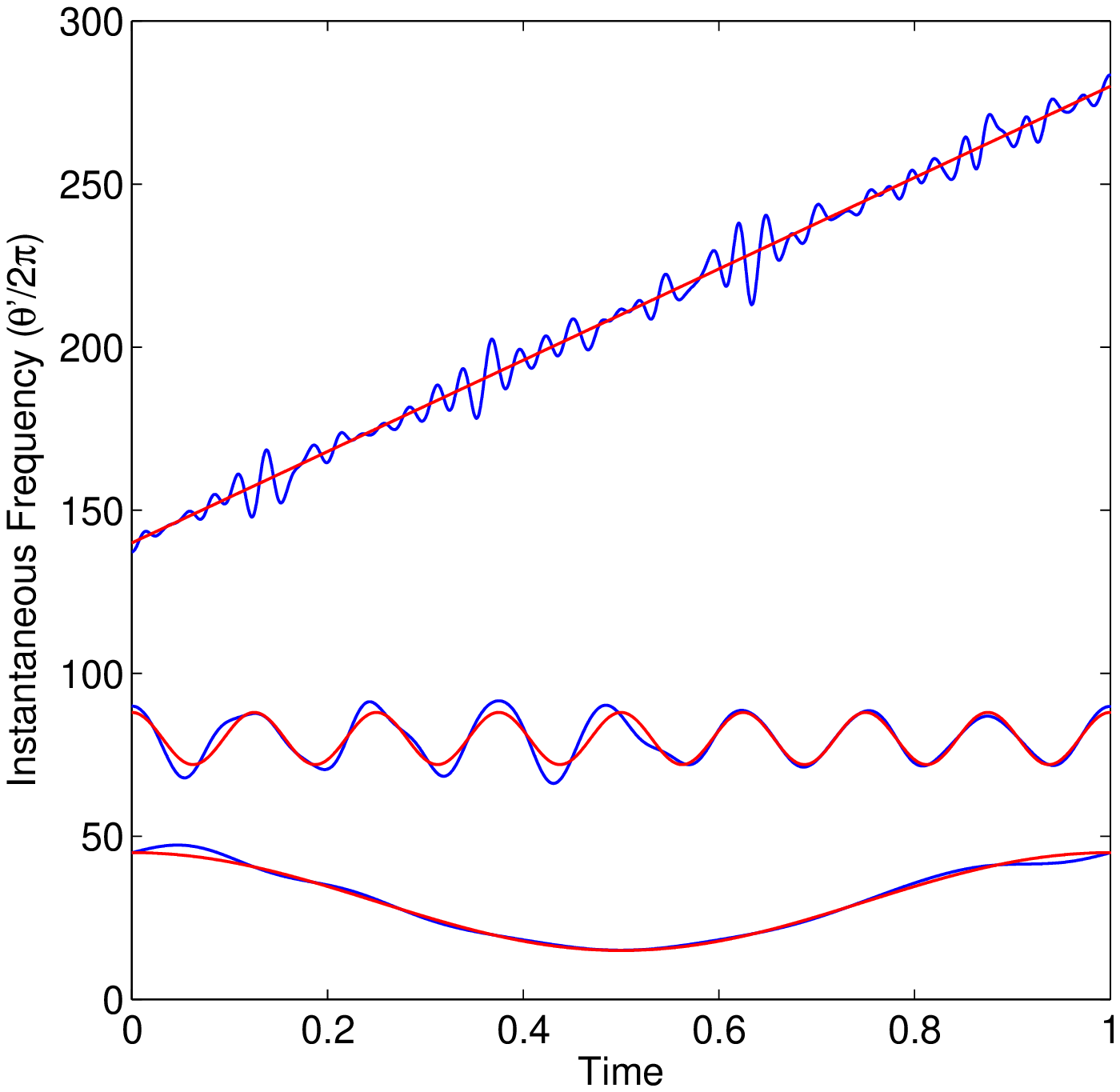}
     \end{center}
    \caption{ \label{tri-IF}Upper row: left: the signal defined in \myref{data-tri} without noise; right: Instantaneous frequencies;
red: exact frequencies; blue: numerical results. Lower row: the same as the upper row except that white noise $2X(t)$ was added to the original
signal, the corresponding SNR is -0.8 dB.}
\end{figure}

\vspace{3mm}
\textbf{Real Data: Length-of-Day Data}
\vspace{3mm}

Next, we apply our method to the Length-of-Day data, see
Fig. \ref{data-LOD}. The data we adopt here was produced
 by Gross \cite{Gross01}, covering the period from 20 January
1962 to 6 January 2001, for a total of 14,232 days (approximate 39 years).
In our previous paper \cite{HS11}, we also studied this data set.
Due to the high computational cost associated with the $l^1$ minimization,
we can not decompose the entire data set. Instead, we decompose
a segment of the data that contains 700 consecutive days.
Thanks to the low computational cost of the FFT-based nonlinear
matching pursuit method, we can now study the entire data set without
any compromise.

\begin{figure}

    \begin{center}

\includegraphics[width=0.9\textwidth]{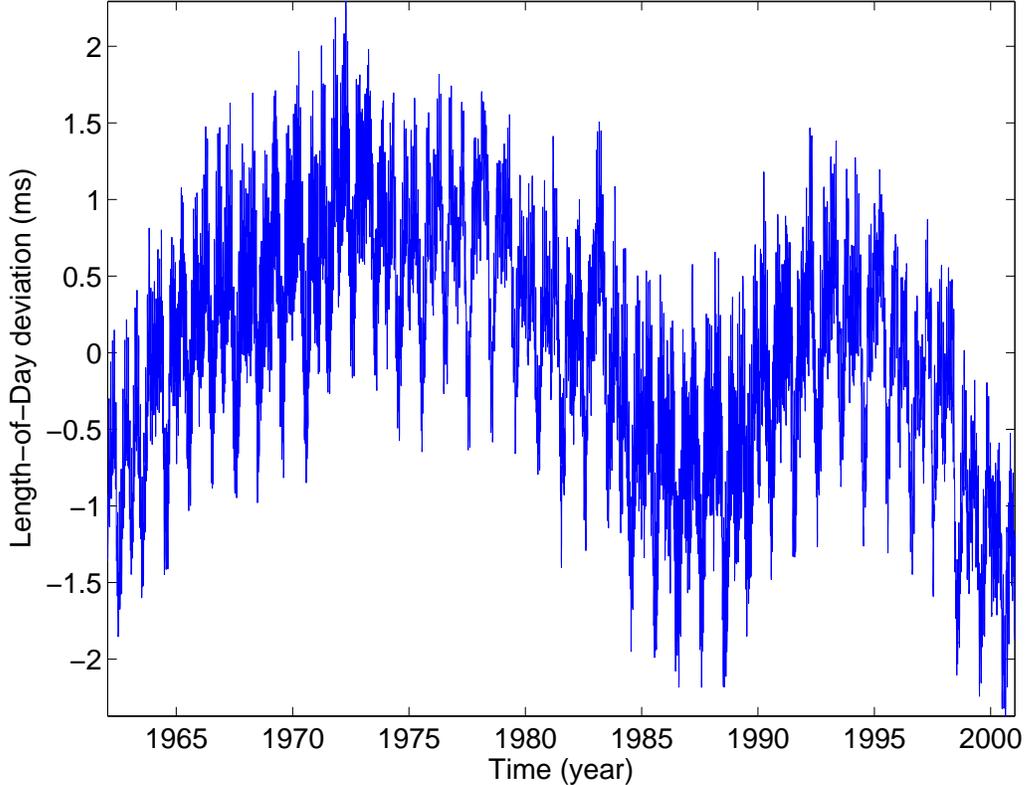}
     \end{center}
     \caption{\label{data-LOD} The daily Length-of-Day data from Jan 20, 1962 to Jan 6, 2001.}
\end{figure}

\begin{figure}

    \begin{center}
\includegraphics[width=0.9\textwidth]{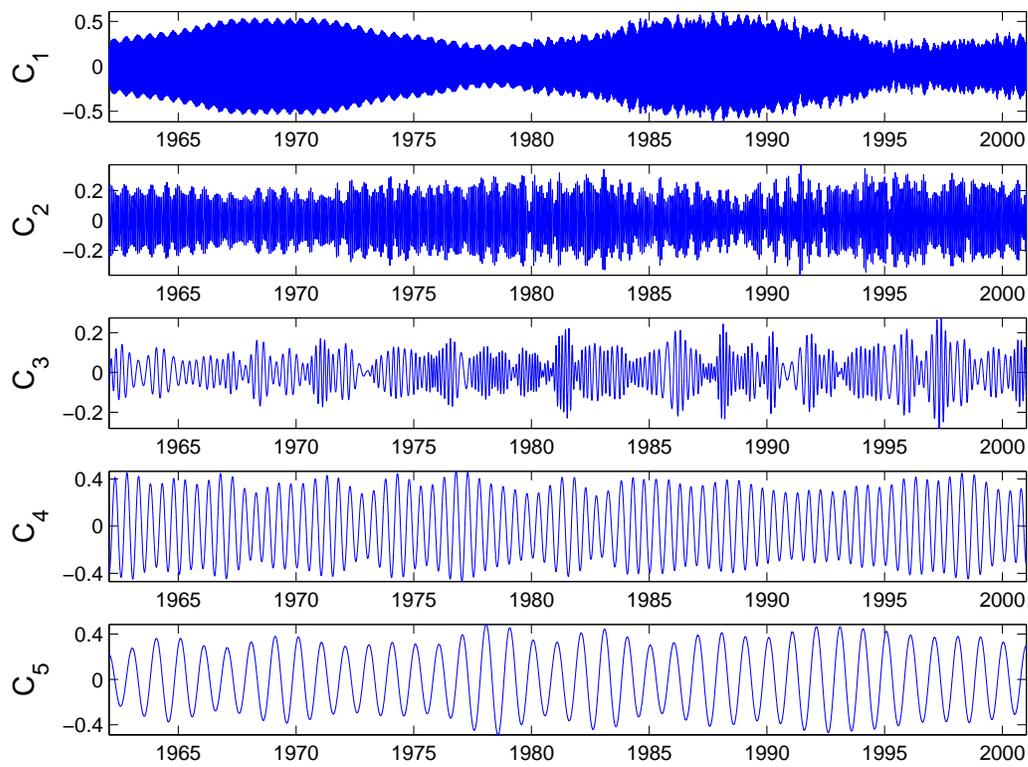}
     \end{center}
     \caption{\label{IMF-LOD} The first 5 IMFs with highest frequencies given by our FFT-based method.}
\end{figure}

Fig. \ref{IMF-LOD} displays the first 5 IMFs extracted by the FFT-based method.
These IMFs are sorted by their frequencies from high to low. We note that the results obtained
by our method do not suffer from the mode mixing phenomenon that is
present in the EMD decomposition.
Moreover, each component is enforced to be an IMF by the construction
of our dictionary. Thus, there is no need to do shifting or
post-processing as was done in the EMD or the EEMD method.
And the IMFs we got are qualitatively match those obtained by EEMD with post-processing \cite{WH09}.

It is interesting to note that the each IMF that we obtain has
a clear physical interpretation. For example, the period of $C_1$ is
around 14 days, corresponding to the semi-monthly tides.
The period of $C_2$ is about 28 days, corresponding to the monthly tides.
Similarly, the period of $C_4$ is about half a year, corresponding to
the  semi-annual cycle and $C_5$ corresponds to the annual cycle.

\subsection{Numerical results for the $l^1$ regularized nonlinear matching pursuit}
\label{non-periodic}

The nonlinear matching pursuit method based on the Fourier Transform
works well only for periodic data. For non-periodic data or data with
poor scale separation, the results
obtained by this method tend to produce some oscillations
near the boundary. This so-called ``end effect'' is also present
in the EMD method and other data analysis methods. In our method, the
``end effect'' comes from the use of the Fourier Transform in the
algorithm. To remove this ``end effect'' error, we need to use the
$l^1$ regularized nonlinear matching pursuit described in Section 3.2
with $V(\theta)$ being the overcomplete Fourier basis defined 
in \myref{2-fold-fourier}.

\subsubsection{Numerical results for non-periodic data}

In this subsection, we perform a numerical experiment to test 
the effectiveness of our $l^1$ regularized nonlinear matching pursuit
for non-periodic data. We first consider the following data in 
our experiment:
\begin{eqnarray}
\label{chirp-nonperiodic}
&&\theta_1=20\pi (t+1)^2+1,\quad \theta_2=161.4\pi t+4(1-t)^2\sin(16\pi t),\nonumber\\
&&f(t)=\frac{1}{1.5+\sin(1.5\pi t)}+(2t+1)\cos\theta_1+(2-t)^2\cos\theta_2.
\end{eqnarray}
In this numerical example, the parameter $\gamma$ is chosen to be 1.
From Fig. \ref{chirp-global}, we observe that the $l^1$ regularized 
nonlinear matching pursuit seems to produce considerably smaller error 
near the boundary for this non-periodic signal.
\begin{figure}

    \begin{center}
	\includegraphics[width=0.5\textwidth]{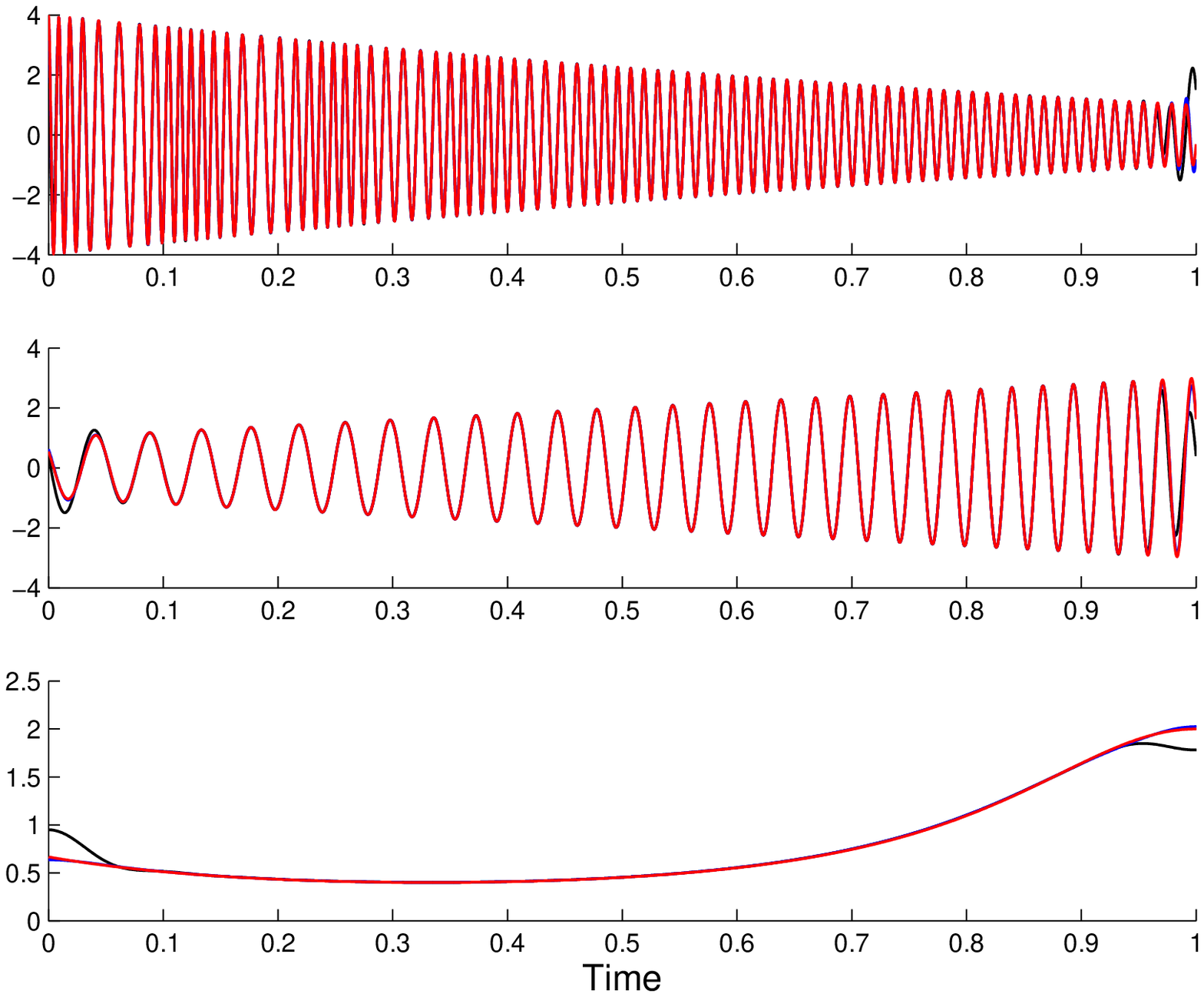}
	\includegraphics[width=0.42\textwidth]{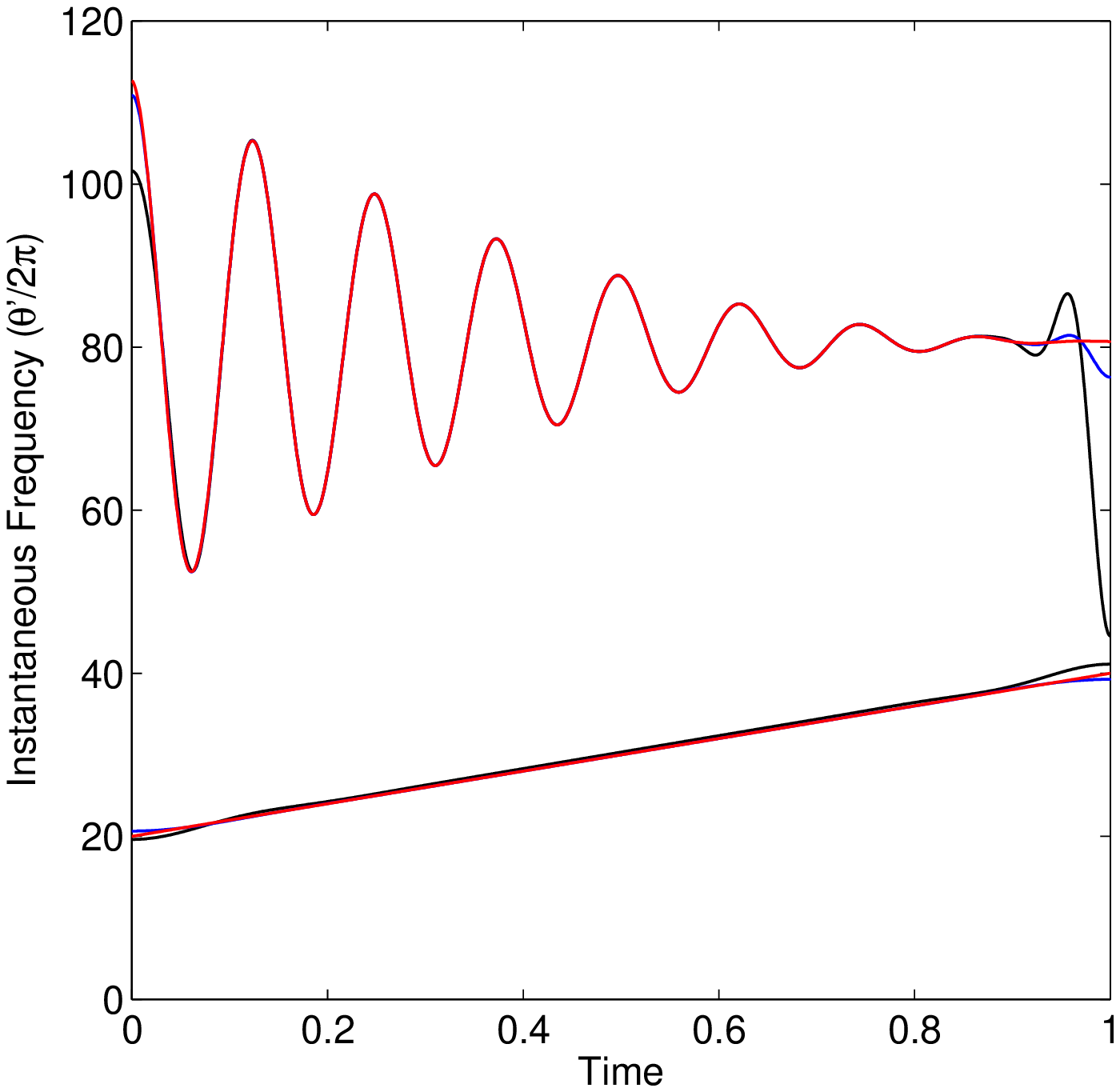}
     \end{center}
     \caption{\label{chirp-global} IMF (left) and Instantaneous frequency (right) of the signal
in \myref{chirp-nonperiodic} obtained from different methods. Red: exact;
Blue: $l^1$ regularized nonlinear matching pursuit; Black: FFT-based algorithm.}
\end{figure}

\subsubsection{Numerical results for data with incomplete or sparse samples}

The $l_1$ regularized nonlinear matching pursuit can also handle the 
incomplete data and the data with sparse samples. We illustrate this
property of our method through a few examples.

The first example is an incomplete signal given by
\myref{incomplete-small}.
\begin{eqnarray}
\label{incomplete-small}
  \theta(t)=120\pi t +10\cos(4\pi t),\;
 a(t)=2+\cos(2\pi t),\;
f(t)=a(t)\cos\theta(t), \; t\in [0,0.4]\cup [0.6,1].
\end{eqnarray}
For this signal, we have only eighty percent of the original data and miss
twenty percent of the data in the gap interval $[0.4,0.6]$. In 
Fig. \ref{data-incomplete}, we plot the recovered signal in the gap
interval $[0.4,0.6]$ (see the middle panel). The recovered signal 
matches the original signal almost perfectly in the gap interval. 
The recovered instantaneous frequency also matches the exact instantaneous
frequency with high accuracy.

In Fig. \ref{data-incomplete-large}, we perform the same numerical 
experiment by enlarging the interval of missing data from 
$(0.4,0.6)$ to $(0.3, 0.7)$, i.e. we miss forty percent of the data.
Even for this more challenging example, our method still gives quite 
reasonable reconstruction of the original data in the region of missing 
data. The recovered instantaneous frequency still approximates the 
exact instantaneous frequency with reasonable accuracy, 
especially away from the region of missing data.

\begin{figure}
    \begin{center}
	\includegraphics[width=0.3\textwidth]{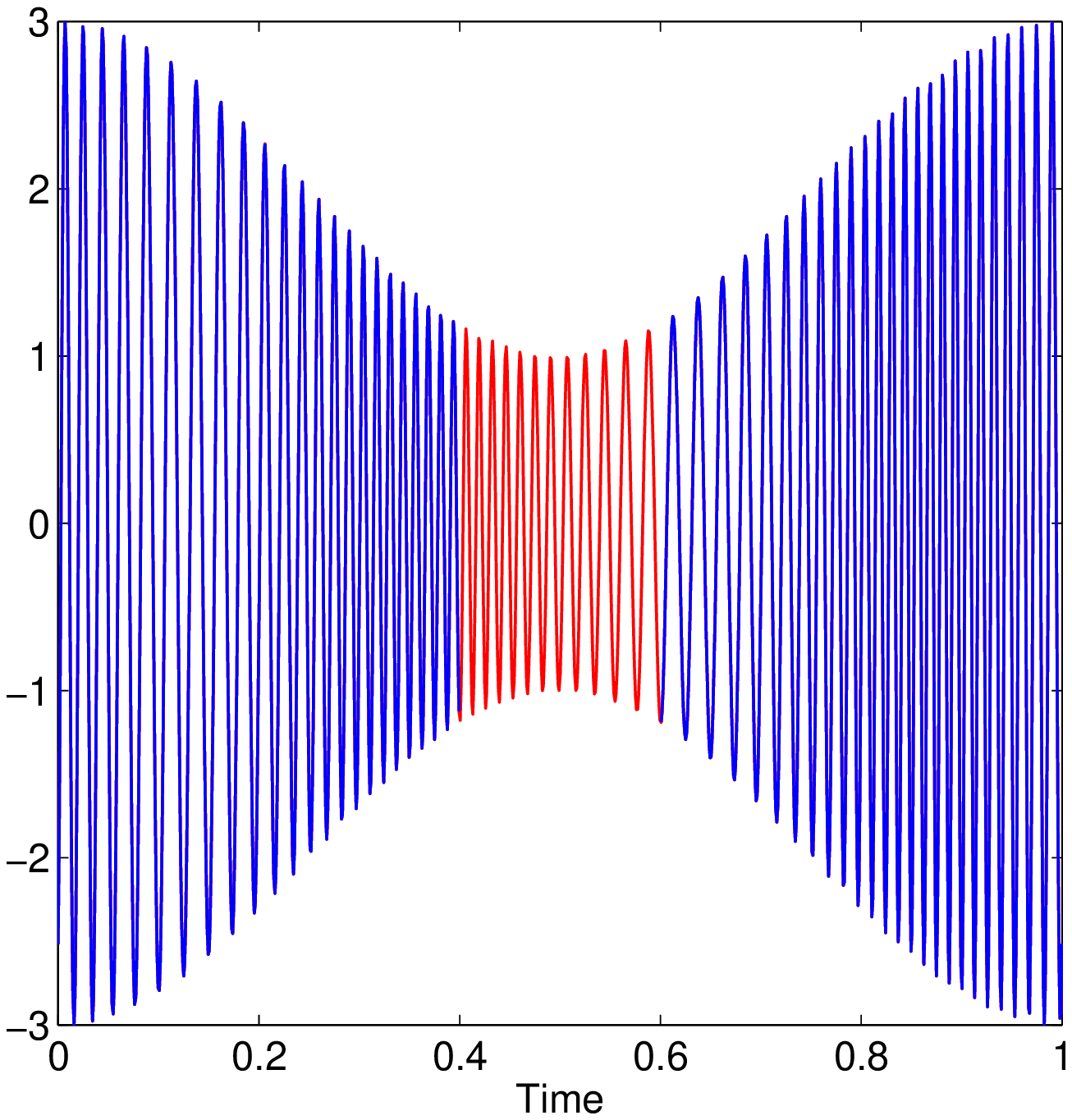}
	\includegraphics[width=0.3\textwidth]{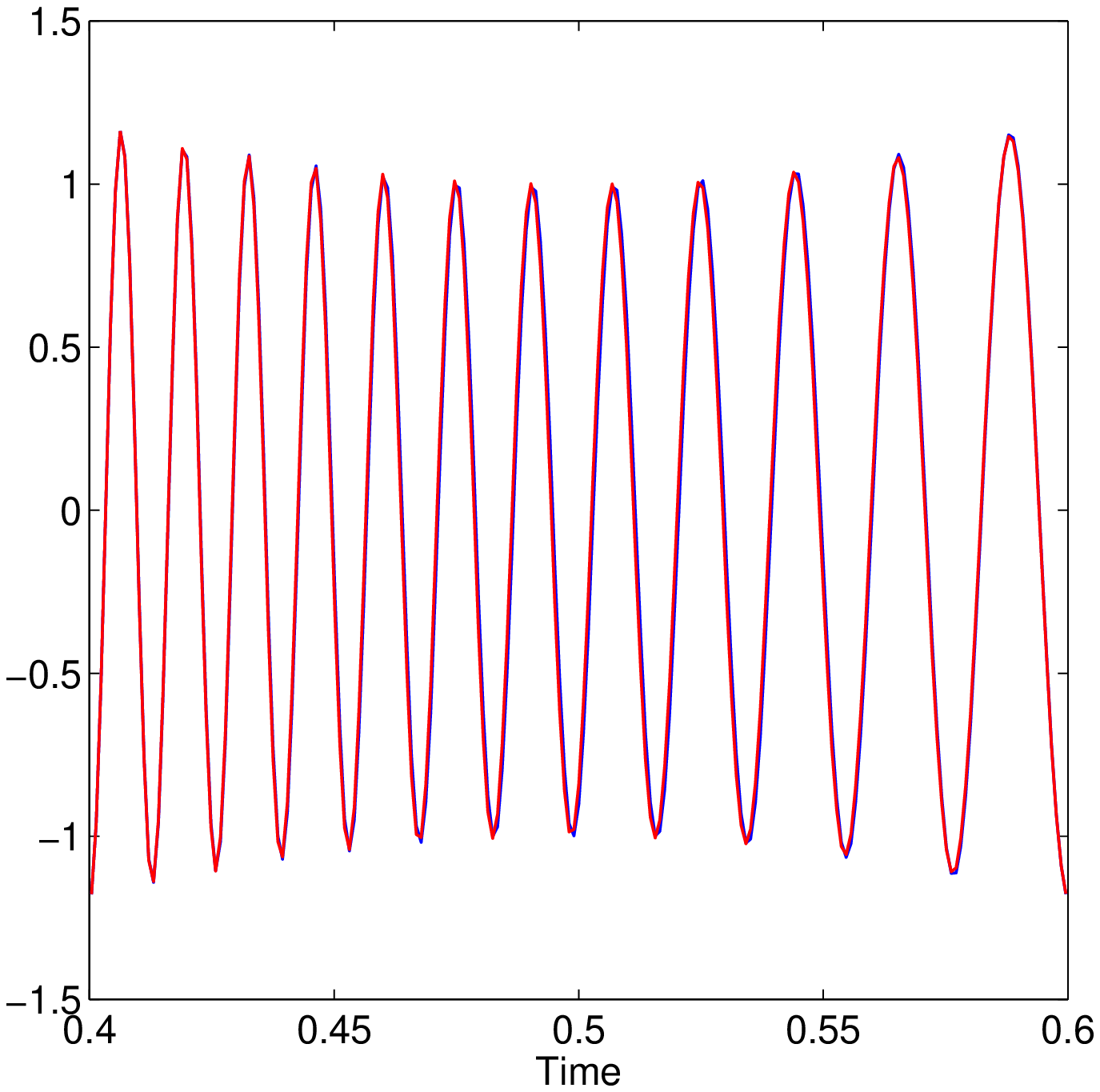}
	\includegraphics[width=0.3\textwidth]{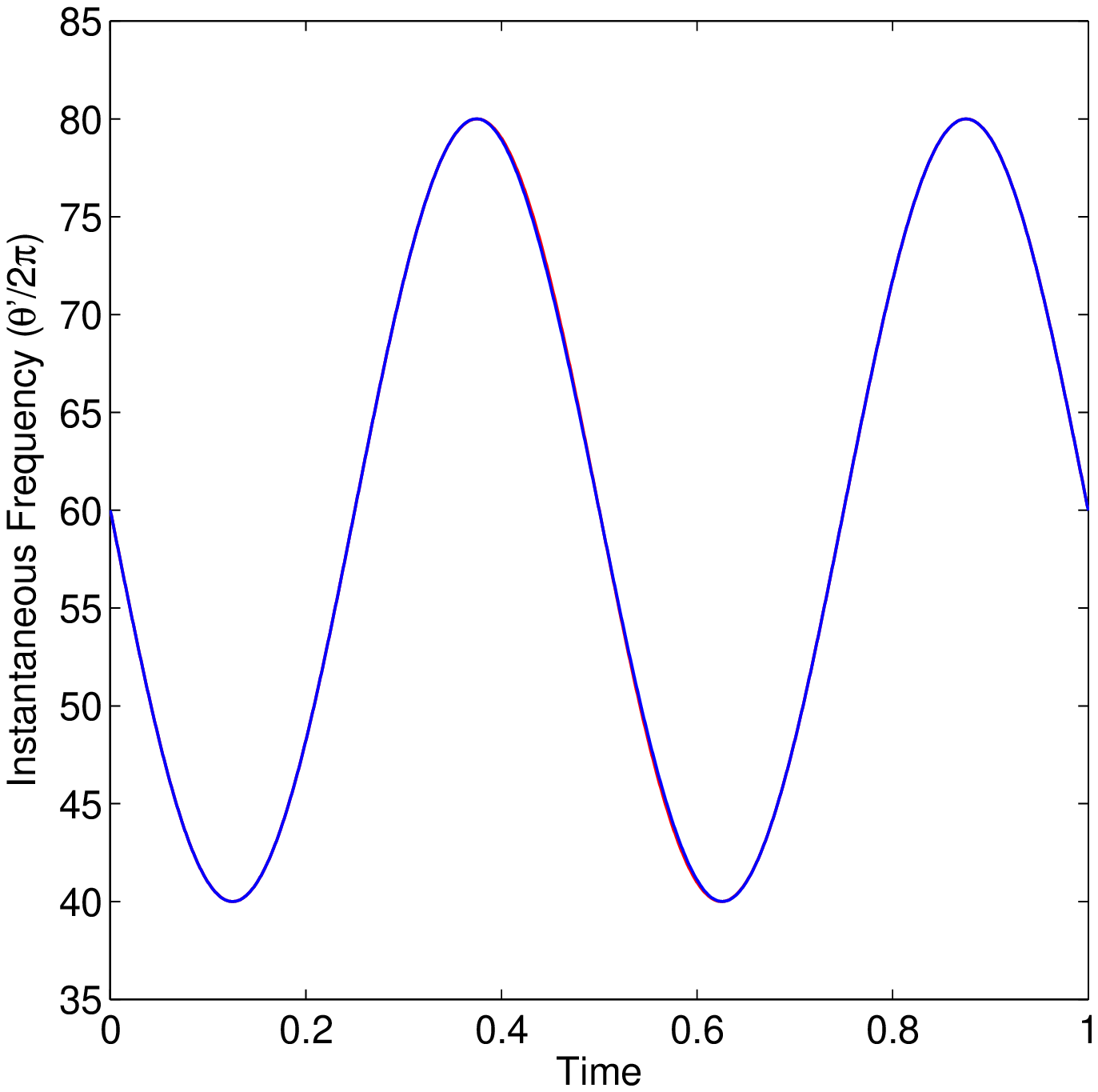}
     \end{center}
     \caption{\label{data-incomplete} Left: blue: the original incomplete data, the gap is $(0.4,0.6)$; red: the
missing data recovered by our method; Middle: the recovered missing data, red: exact; blue: numerical.
Right: the instantaneous frequencies, red: exact; blue: numerical.}
\end{figure}
\begin{figure}
    \begin{center}
	\includegraphics[width=0.3\textwidth]{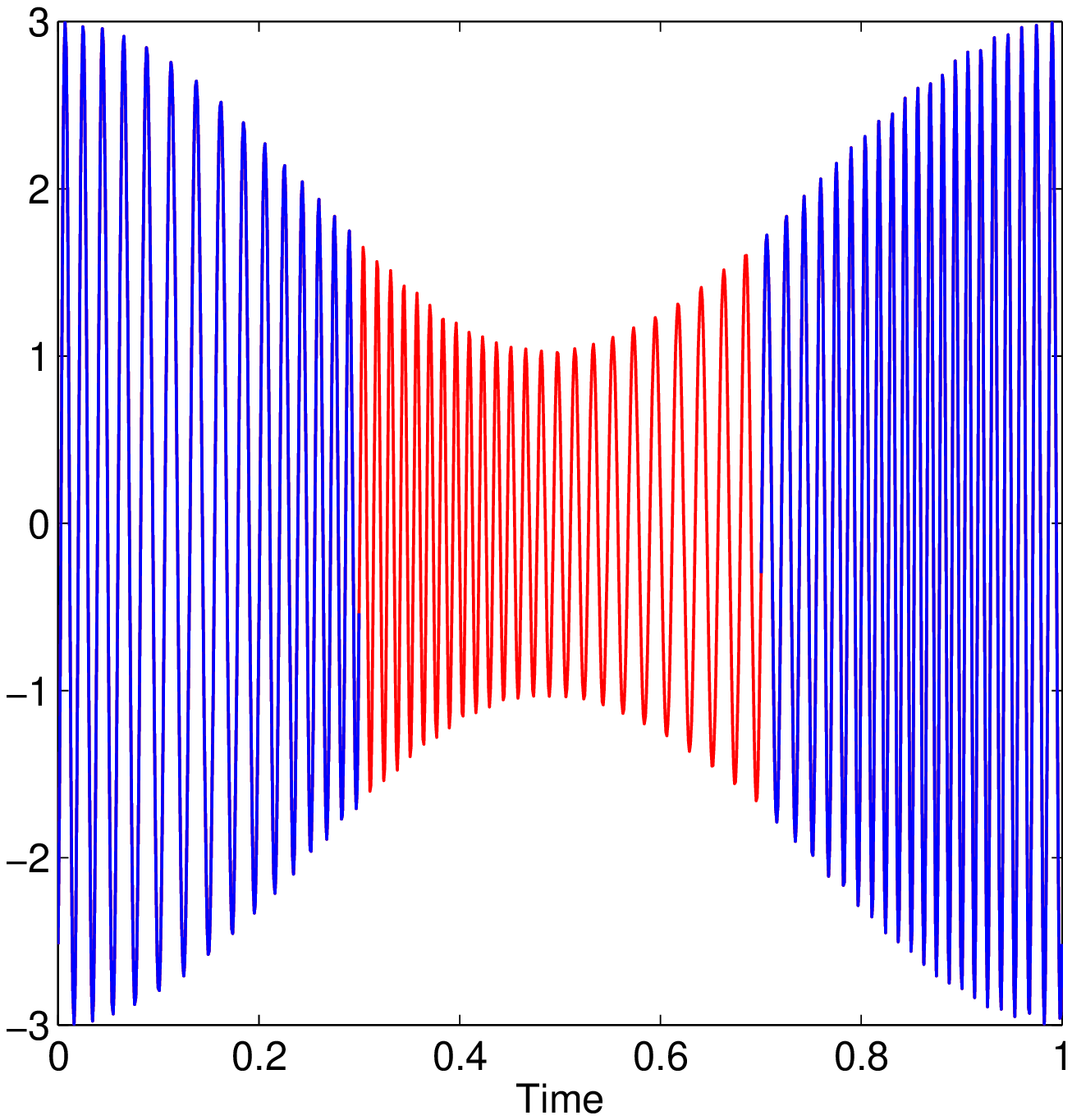}
	\includegraphics[width=0.3\textwidth]{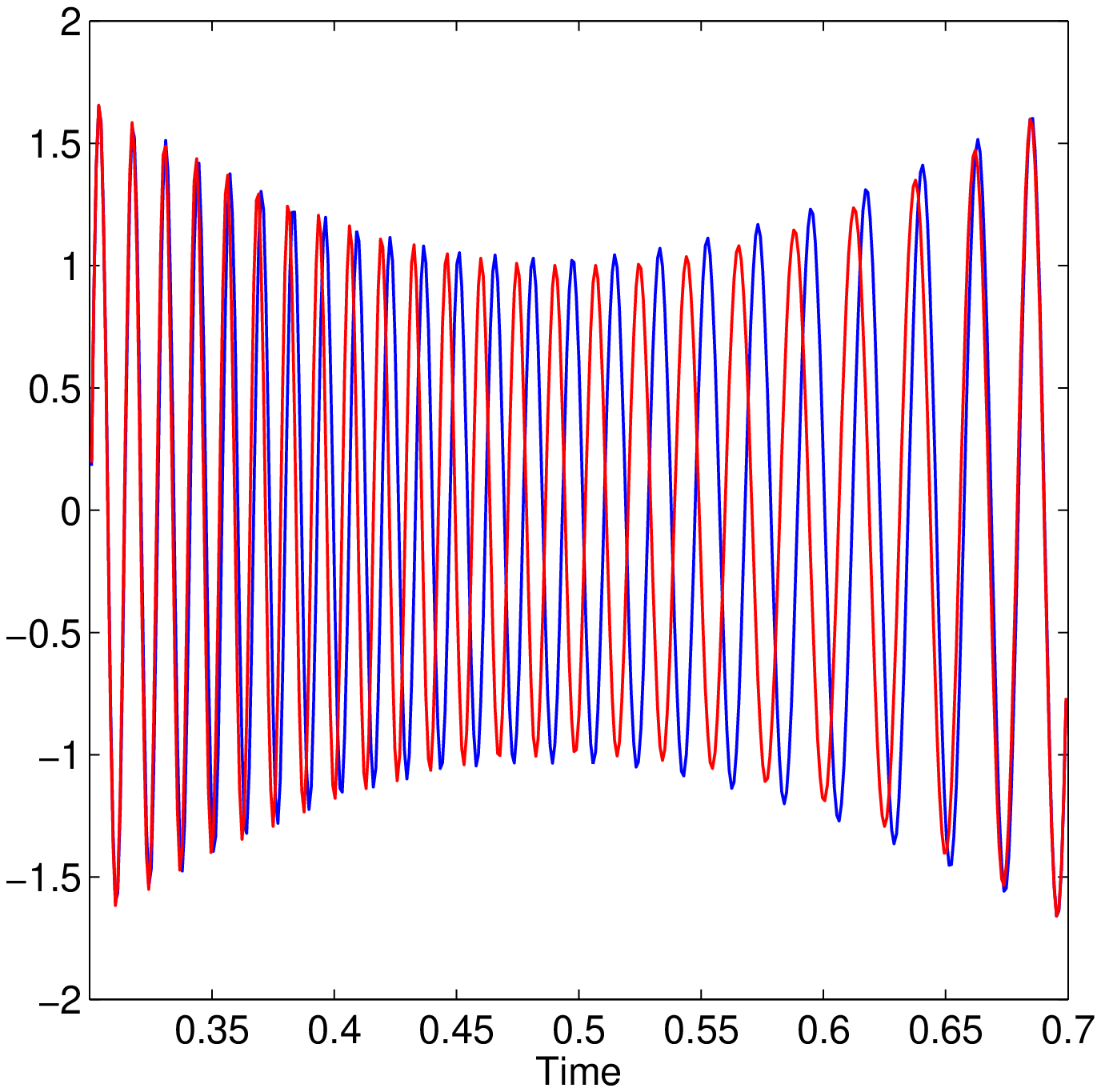}
	\includegraphics[width=0.3\textwidth]{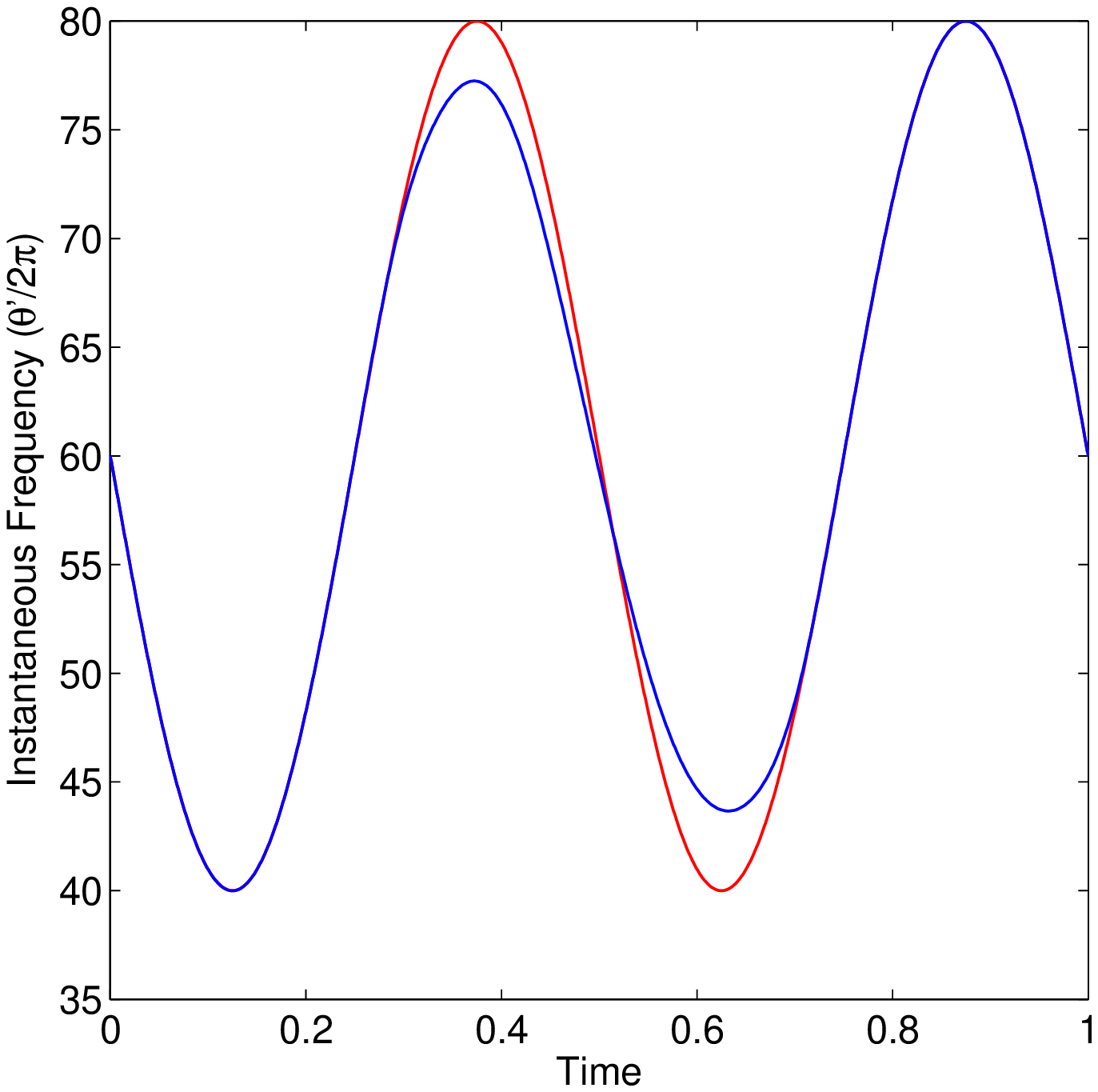}
     \end{center}
     \caption{\label{data-incomplete-large}  Left: blue: the original incomplete data, the gap is $(0.3,0.7)$; red:
the missing data recovered by our method; Middle: the recovered missing data, red: exact; blue: numerical.
Right: the instantaneous frequencies, red: exact; blue: numerical.}
\end{figure}

Finally, we consider an example with insufficient samples. The signal 
is generated as follows: 
\begin{eqnarray}
\label{data-sparse-formula}
  \theta(t_i)=120\pi t_i +10\cos(2\pi t_i),\quad
 a(t_i)=2+\cos(2\pi t_i),\quad
f(t_i)=a(t_i)\cos\theta(t_i),\; t_i\in [0,1].
\end{eqnarray}
and $i=1,2,\cdots, N$. The location $t_i$ is chosen randomly in $[0,1]$.
In this example, the number of samples is $64$. This means that we have 
about one sample point within one period of the signal on average. 

Fig. \ref{data-sparse} gives the results obtained by our method. In 
the case of insufficient samples without noise, the recovered signal 
and the original signal are almost indistinguishable.
\begin{figure}

    \begin{center}
	\includegraphics[width=0.45\textwidth]{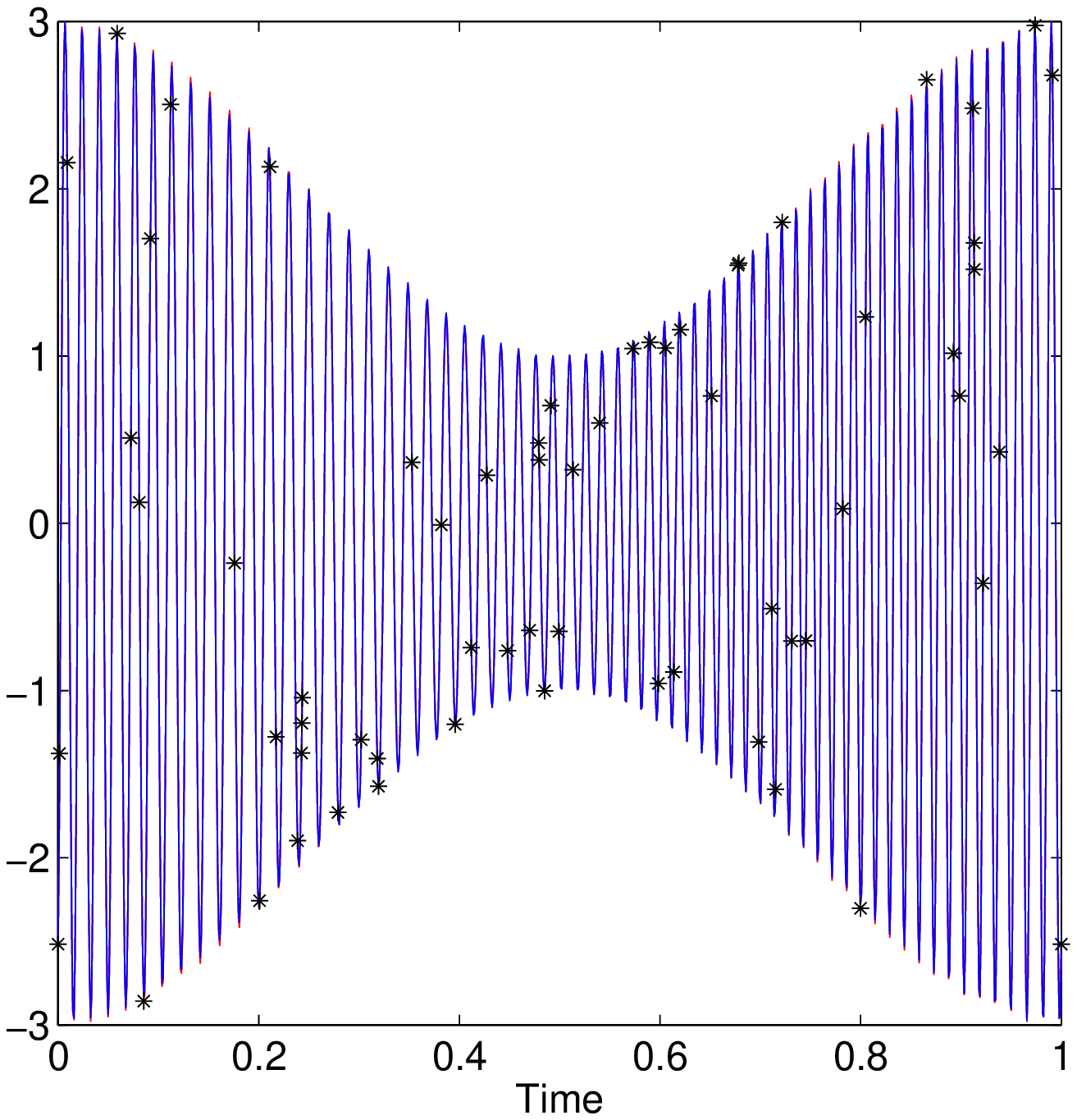}
	\includegraphics[width=0.45\textwidth]{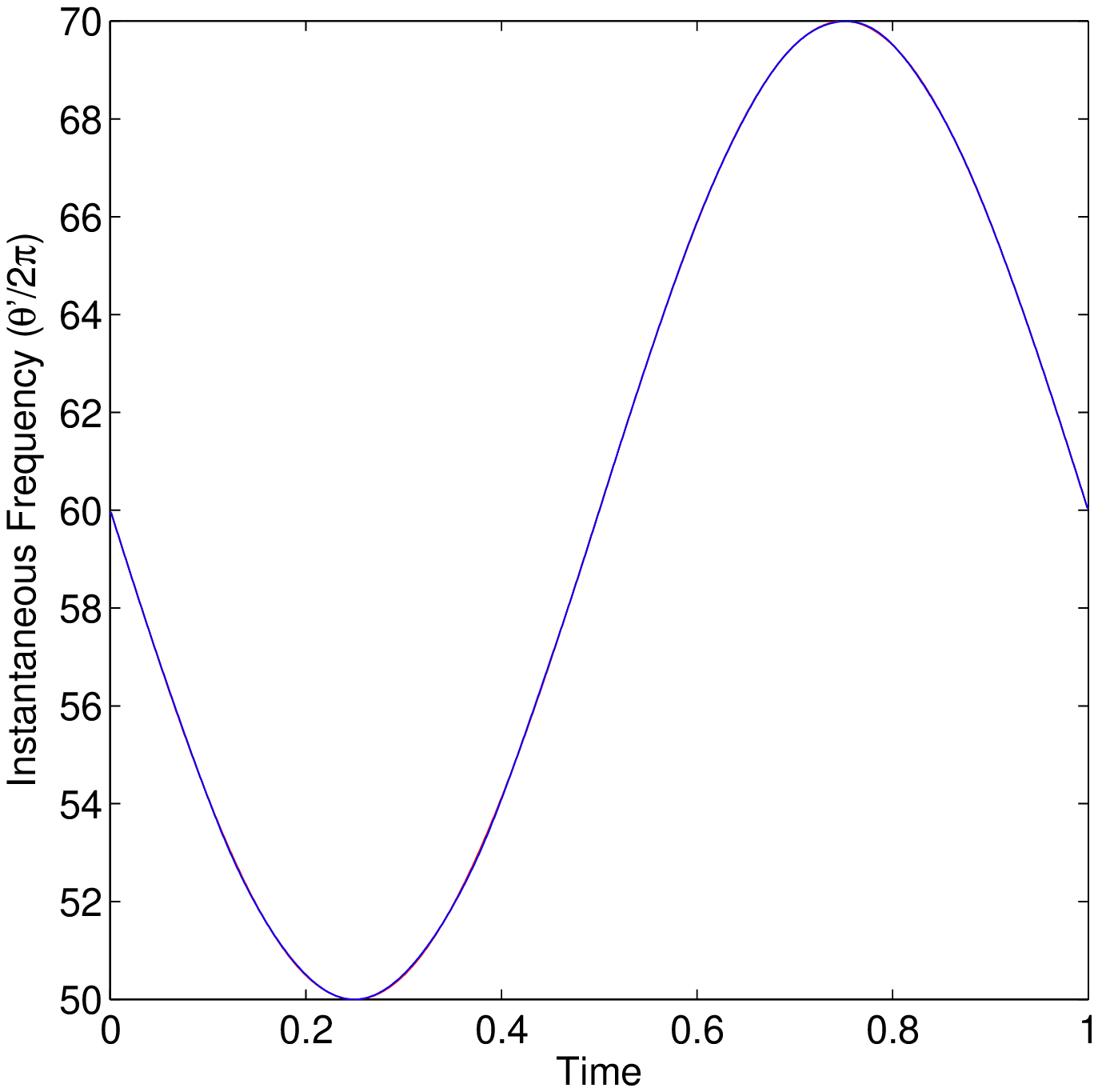}
     \end{center}
     \caption{\label{data-sparse} Left: original samples, red: exact; blue: recovered; '*' represent the sample points.
Right: instantaneous frequency, red: exact; blue: numerical.}
\end{figure}

We now add Gaussian noise to the original samples and apply our method 
to this noisy data. The result is given in Fig. \ref{data-sparse-noise}. 
In this case, the noise $0.2X(t)$ is added to the origianl signal $f(t)$ given 
in \myref{data-sparse-formula}.
We can see that both the recovered signal and the instantaneous frequency 
still have reasonable accuracy. This shows that our method is stable 
with respect to noise perturbation even for sparse under-sampled data.
\begin{figure}
    \begin{center}
	\includegraphics[width=0.45\textwidth]{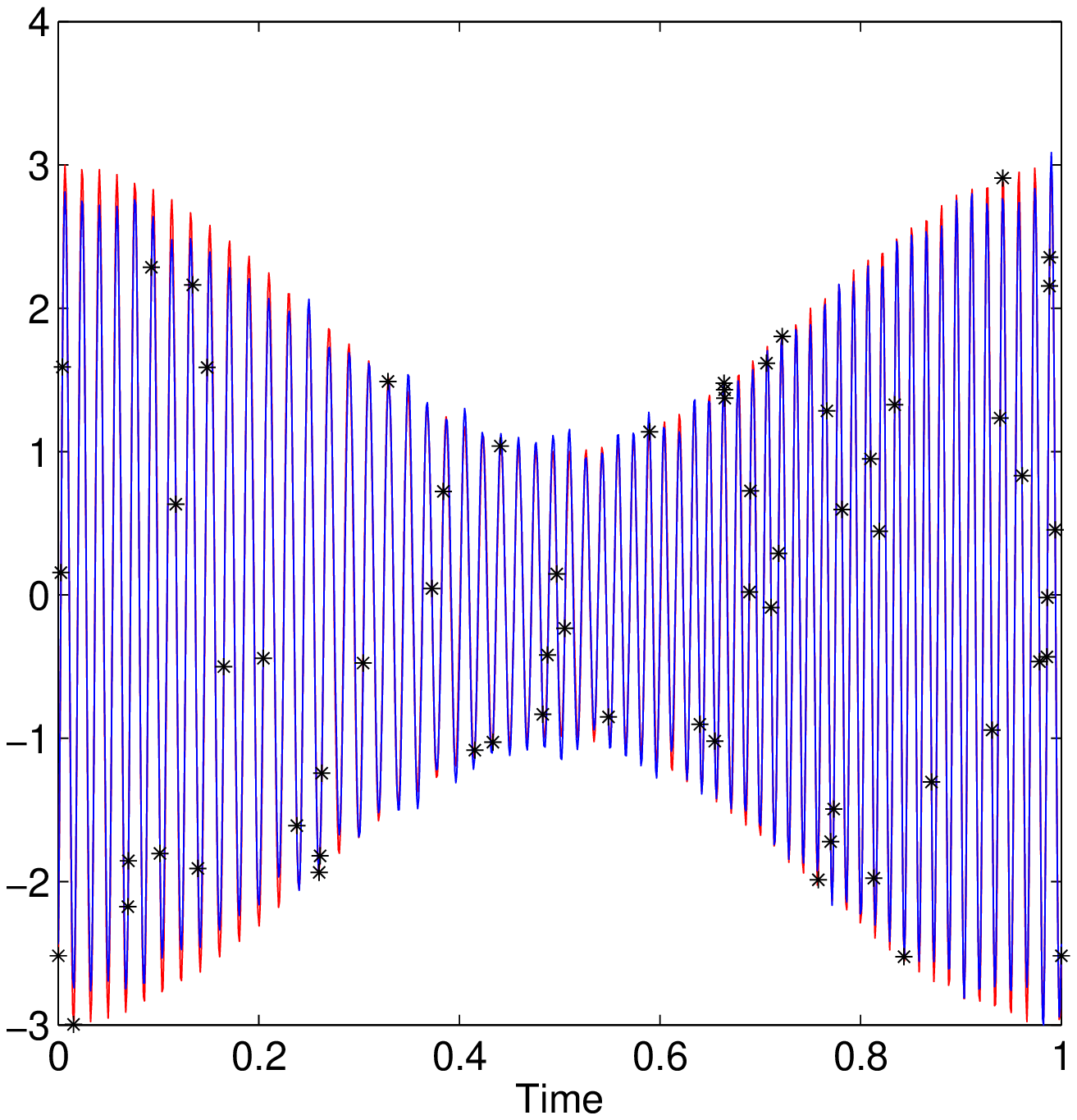}
	\includegraphics[width=0.45\textwidth]{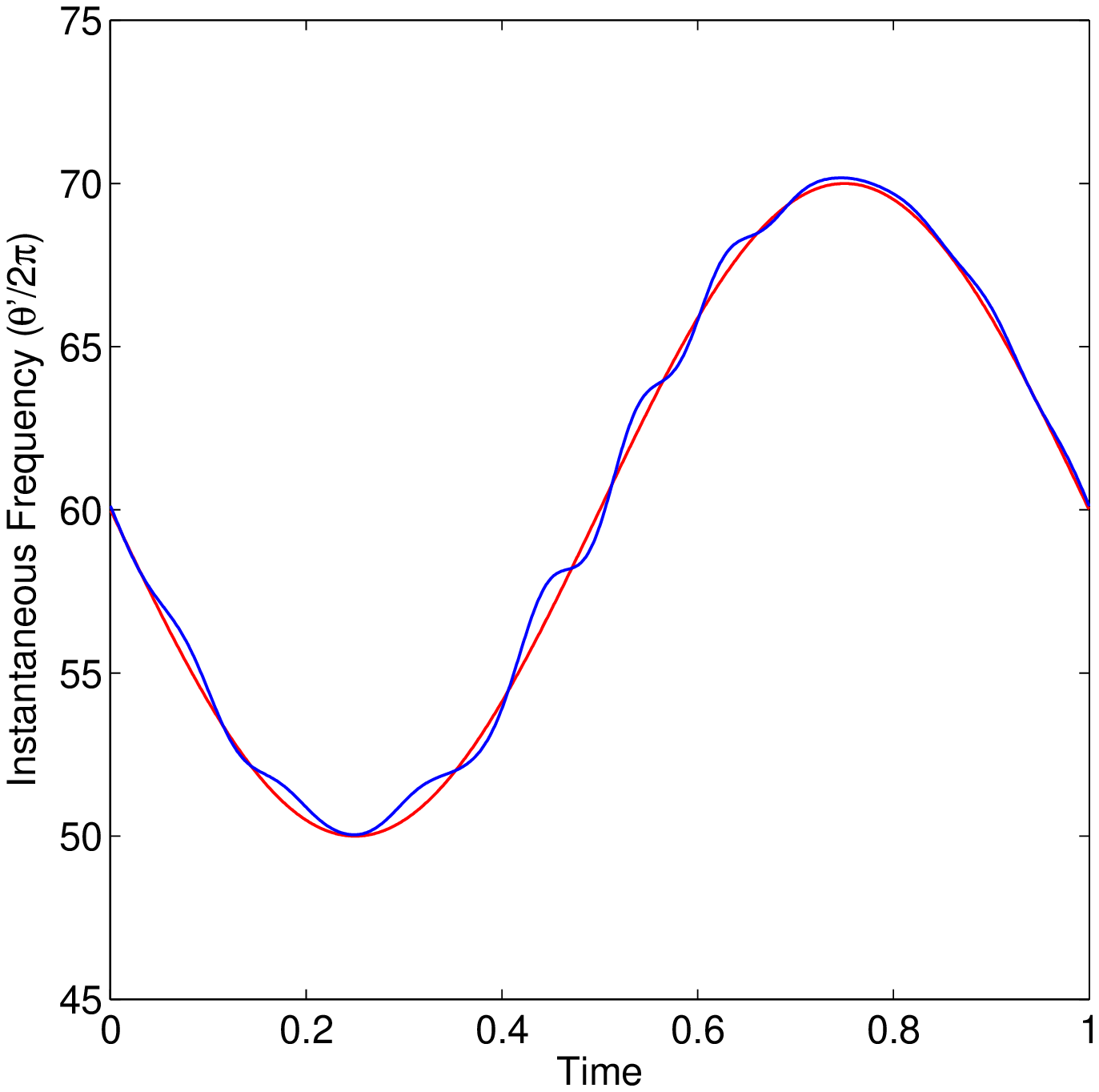}
     \end{center}
     \caption{\label{data-sparse-noise}Left: original samples, red: exact; blue: recovered from the noised data, 
$f(t_i)+0.2X(t_i)$; 
'*' represent the sample points.
Right: instantaneous frequencies, red: exact; blue: numerical.}
\end{figure}

In the future,
we plan to perform some theoretical study of our method for data with 
incomplete or sparse samples and carry out more numerical experiments 
for some complicated or real data.

\section{Generalizations for the $l^1$ regularized nonlinear matching pursuit}

The iterative algorithm based on $l^1$ regularized nonlinear matching 
pursuit can also be generalized to deal with more complicated data, such 
as the data with poor scale-separation and the data with intra-wave 
frequency modulation. In this section, the power of this kind of 
algorithm is shown through several numerical examples, more details can 
be presented in our subsequent papers.

\subsection{Numerical results for data with poor scale-separation property}

In the previous sections, we show that for data with a good 
scale-separation property, the algorithm based on $l^1$ regularized 
nonlinear matching 
pursuit can give an accurate decomposition. Now, we give a brief 
discussion how to decompose data with poor scale-separation property.

Let $f$ be a signal that has the
following sparse decomposition over dictionary $\mathcal{D}$:
\begin{eqnarray}
  f(t)=\sum_{k=1}^M a_k\cos\theta_k,\quad a_k\cos\theta_k\in \mathcal{D},
\end{eqnarray}
where $\mathcal{D}$ is defined in \myref{dic-D}. But now the 
instantaneous frequencies $\theta'_k(t)$ are not well separated, 
so $f(t)$ does not satisfy the scale-separation condition defined
in Section \ref{analysis}.

It is well known that for data consisting of components with 
interfering frequencies, the matching pursuit with a Gabor
dictionary may not give a sparse decomposition \cite{Mallat09}. 
Since our method is based on the matching pursuit, it is not 
surprising that it may not be able to generate the sparsest 
decomposition either.

To illustrate, we consider the following signal consisting of 
two IMFs whose instantaneous frequencies intersect each other.
 The signal is generated by the analytical formula given below.
\begin{eqnarray}
\label{data-cross}
f(t)=\cos(20\pi t+40\pi t^2+\sin(2\pi t))+\cos(40\pi t) .
\end{eqnarray}

\begin{figure}

    \begin{center}

\includegraphics[width=0.3\textwidth]{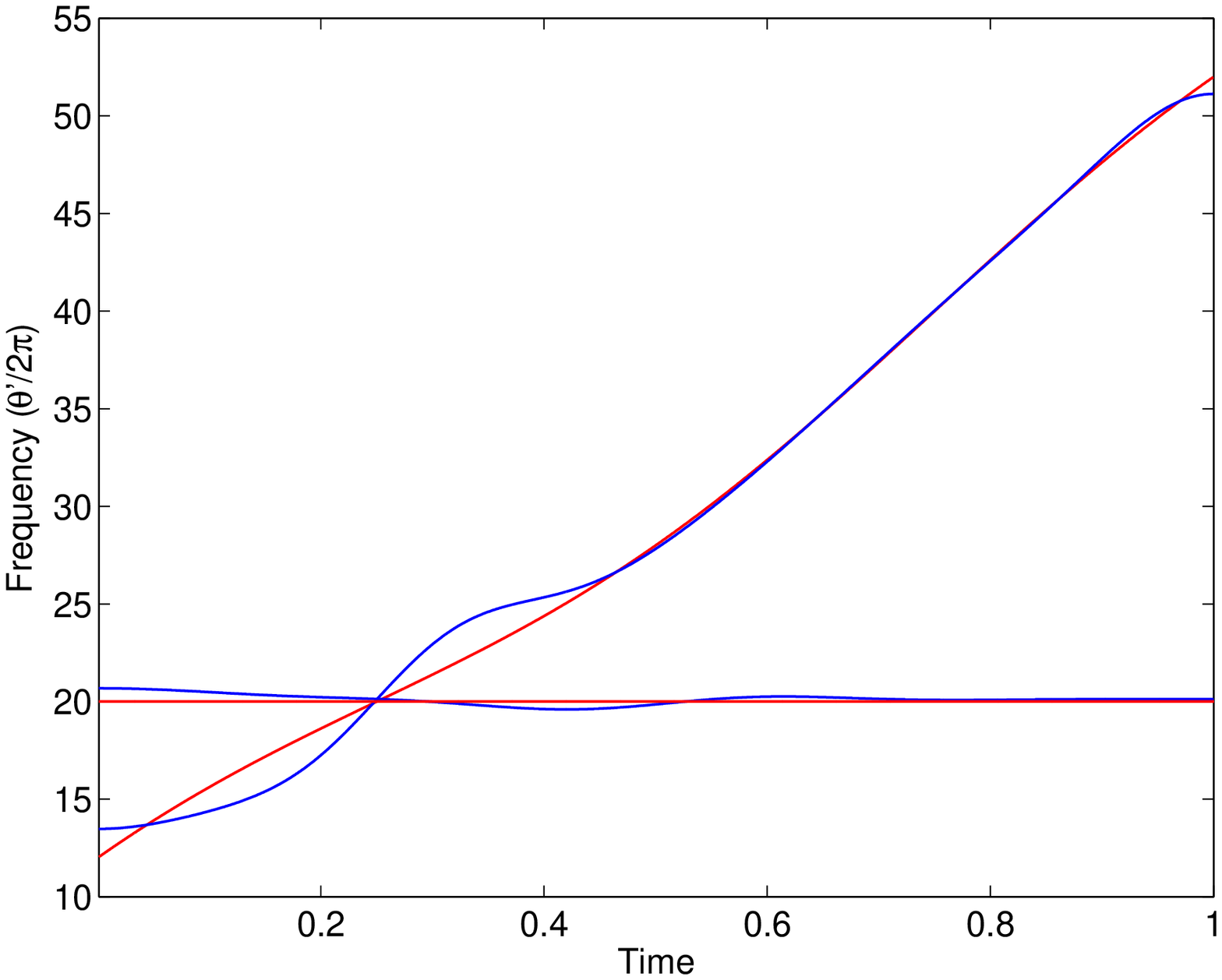}
\includegraphics[width=0.3\textwidth]{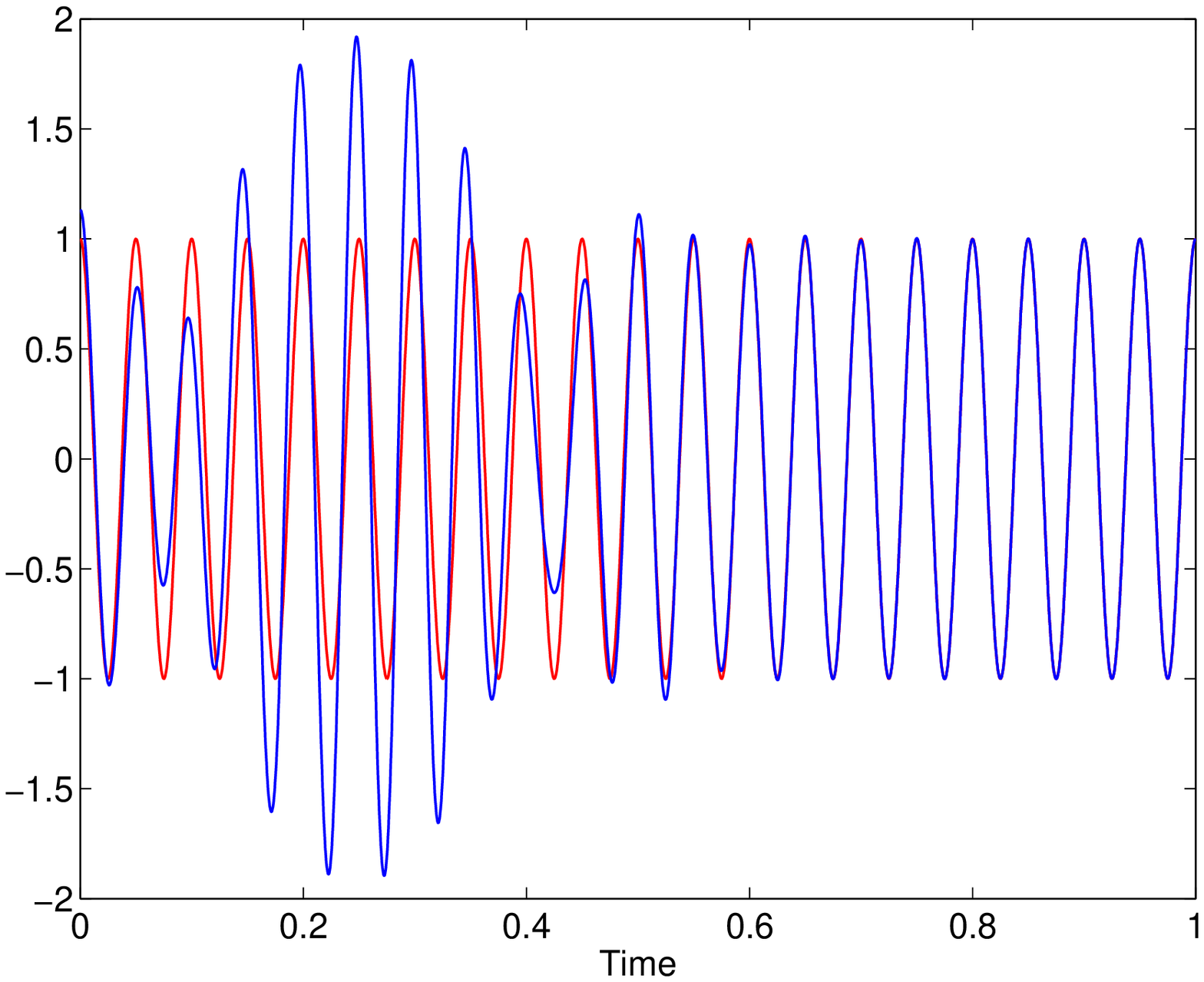}
\includegraphics[width=0.3\textwidth]{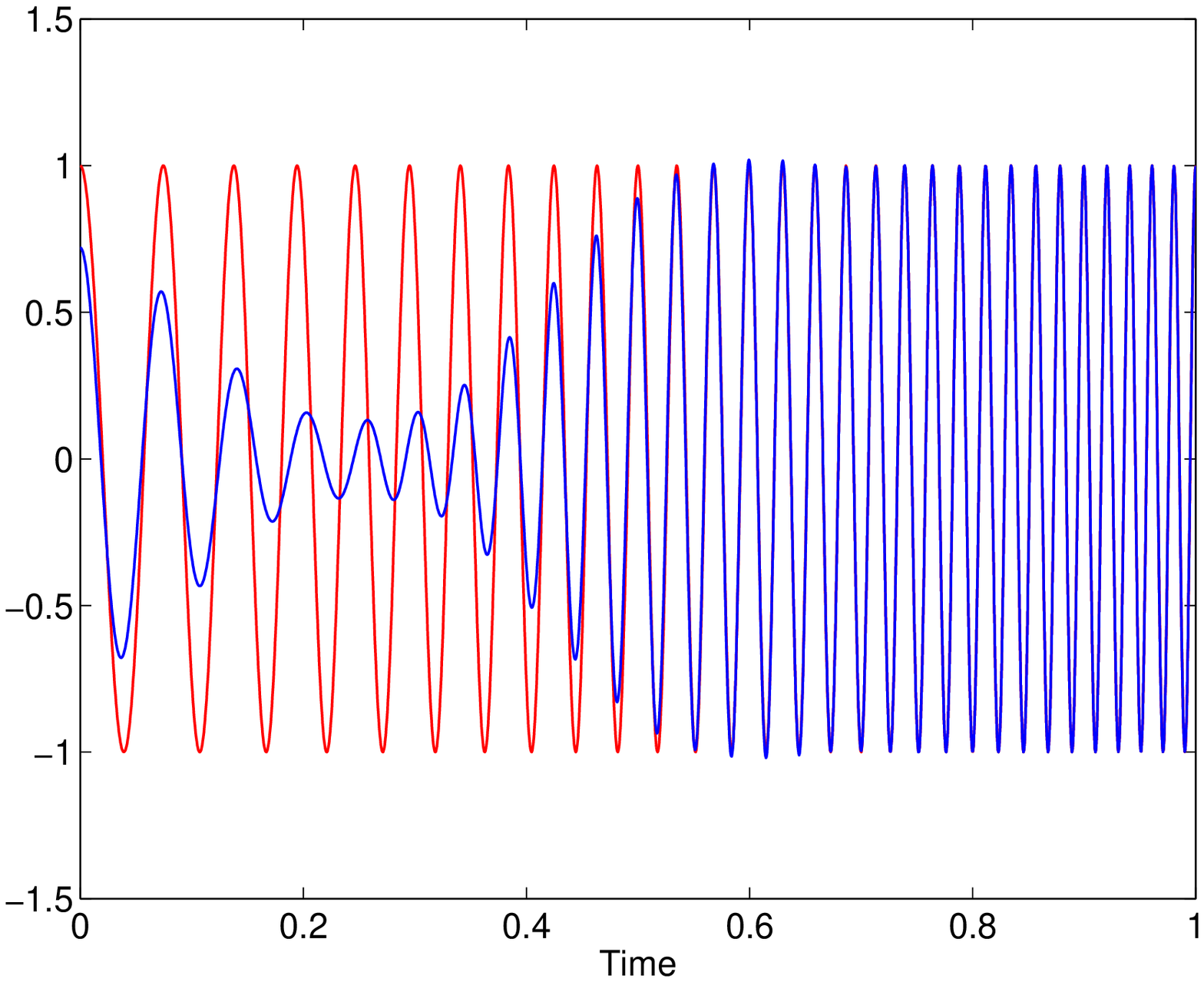}

     \end{center}
    \caption{ \label{cross-IF}Left: Instantaneous frequencies;
red: exact frequencies; blue: numerical results. Middle and Right: IMFs extracted by the previous nonlinear matching pursuit.}
\end{figure}

Fig. \ref{cross-IF} plots the instantaneous frequencies and IMFs 
recovered by the nonlinear matching pursuit given in the previous section. Near the point of
intersection, both the instantaneous frequencies and IMFs produce 
noticeable errors. The good news is that the instantaneous frequency
recovered by our method is still in phase with the exact one. 
Furthermore, the accuracy is quite reasonable
in the region far away from the point of intersection.
This shows that our method has
a temporal locality property, which is important in many
physical applications.

To further improve the accuracy of our decomposition when there are
a number of instantaneous frequencies that are not well separated, we 
need to decompose these components simultaneously since these IMFs 
have strong correlation. Assume that we can learn from our 
$l^1$ regularized nonlinear matching pursuit that there are $M$ 
components of IMFs whose instantaneous frequencies are not well 
separated, we modify our decomposition method to solve the following
optimization problem:
\begin{eqnarray}
  \label{opt-mul}
  \min \left (\gamma\sum_{k=1}^M \|\widehat{a}_k\|_{l^1}+\|f(t)-\sum_{k=1}^M a_k\cos\theta_k\|_{l^2}^2\right ),\quad s.t. \quad a_k\cos\theta_k\in \mathcal{D},
\end{eqnarray}
where $\gamma >0$ is a regularized parameter and $\widehat{a}_k$ is the representation of $a_k$ in $V(\theta_k)$ space.

This problem is much more difficult to solve than the original one, 
since the different components may have strong correlation. 
Based on the $l^1$ regularized nonlinear matching pursuit that we 
introduced in the previous sections, we have developed a new 
method to solve the above optimization problem. The detail 
of this method will be reported in our subsequent paper. 
Here we give an example to demonstrate that this new method has
the capability to deal with the signal with poor scale separation.

Fig \ref{cross-IF-mul} gives the results obtained by our new method 
for the signal given in \myref{data-cross}. We can see that both 
the instantaneous frequencies and IMFs match the exact ones 
pretty well. These results are much better
than those given in Fig. \ref{cross-IF}.
\begin{figure}

    \begin{center}

\includegraphics[width=0.3\textwidth]{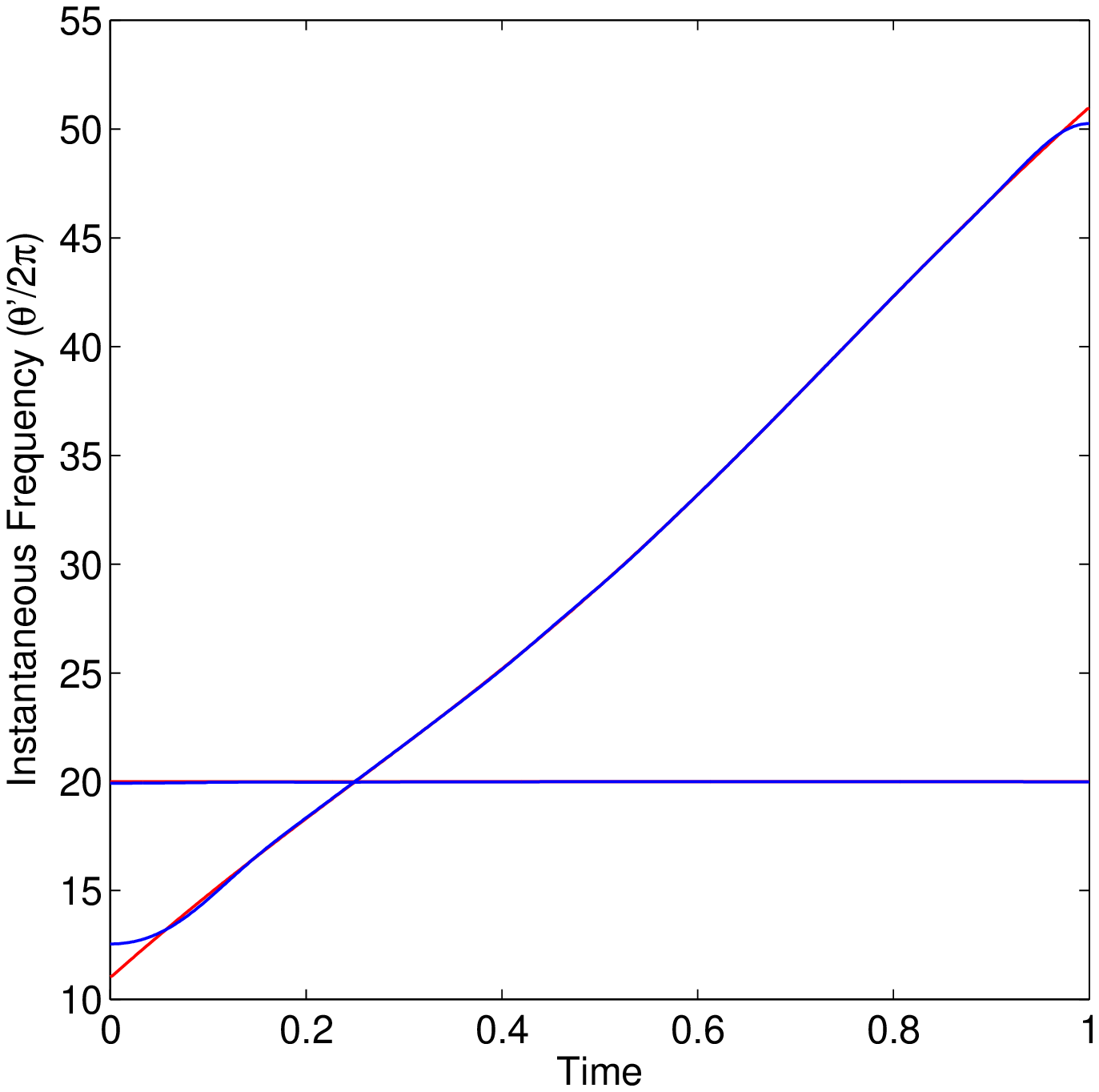}
\includegraphics[width=0.3\textwidth]{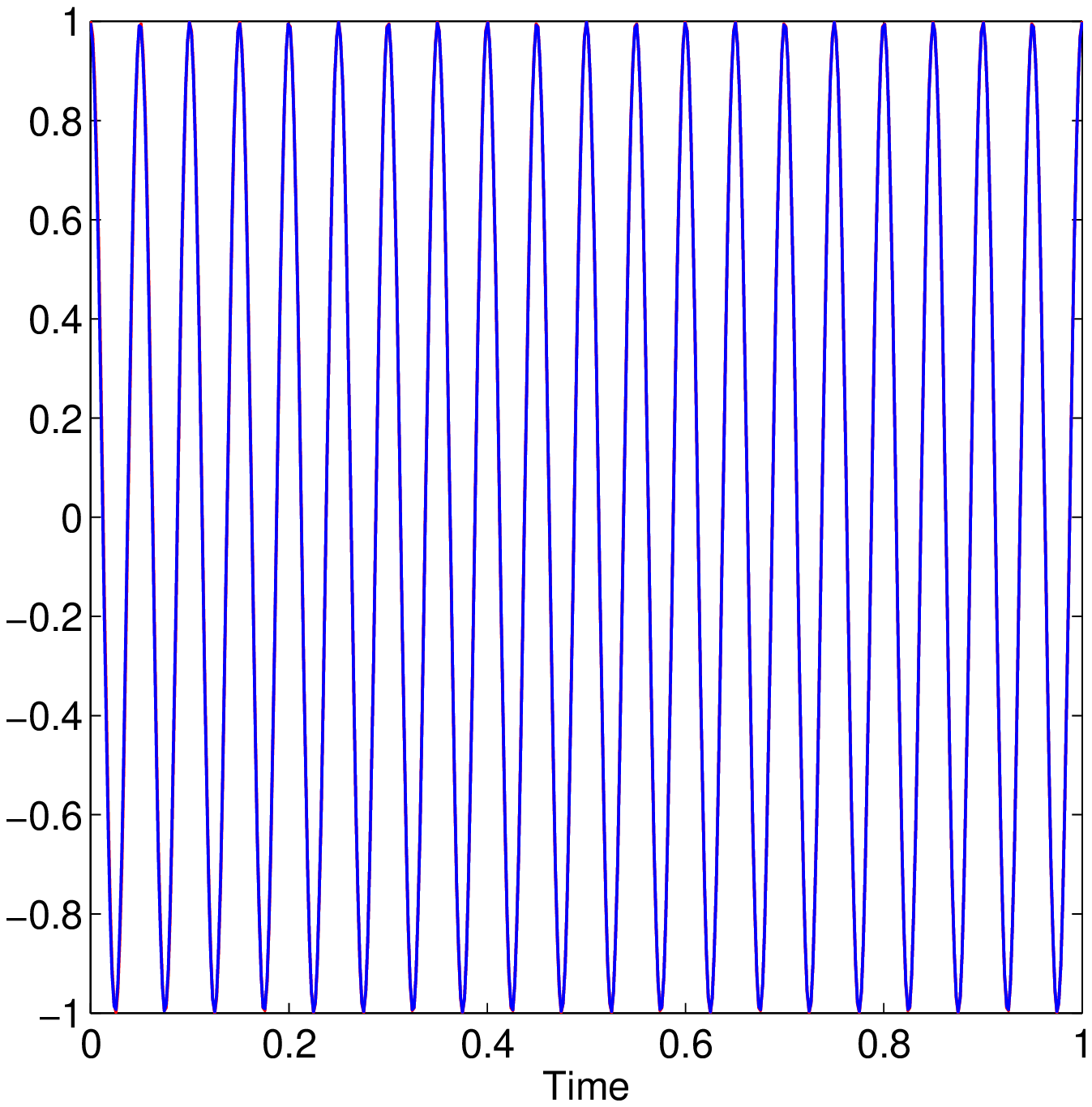}
\includegraphics[width=0.3\textwidth]{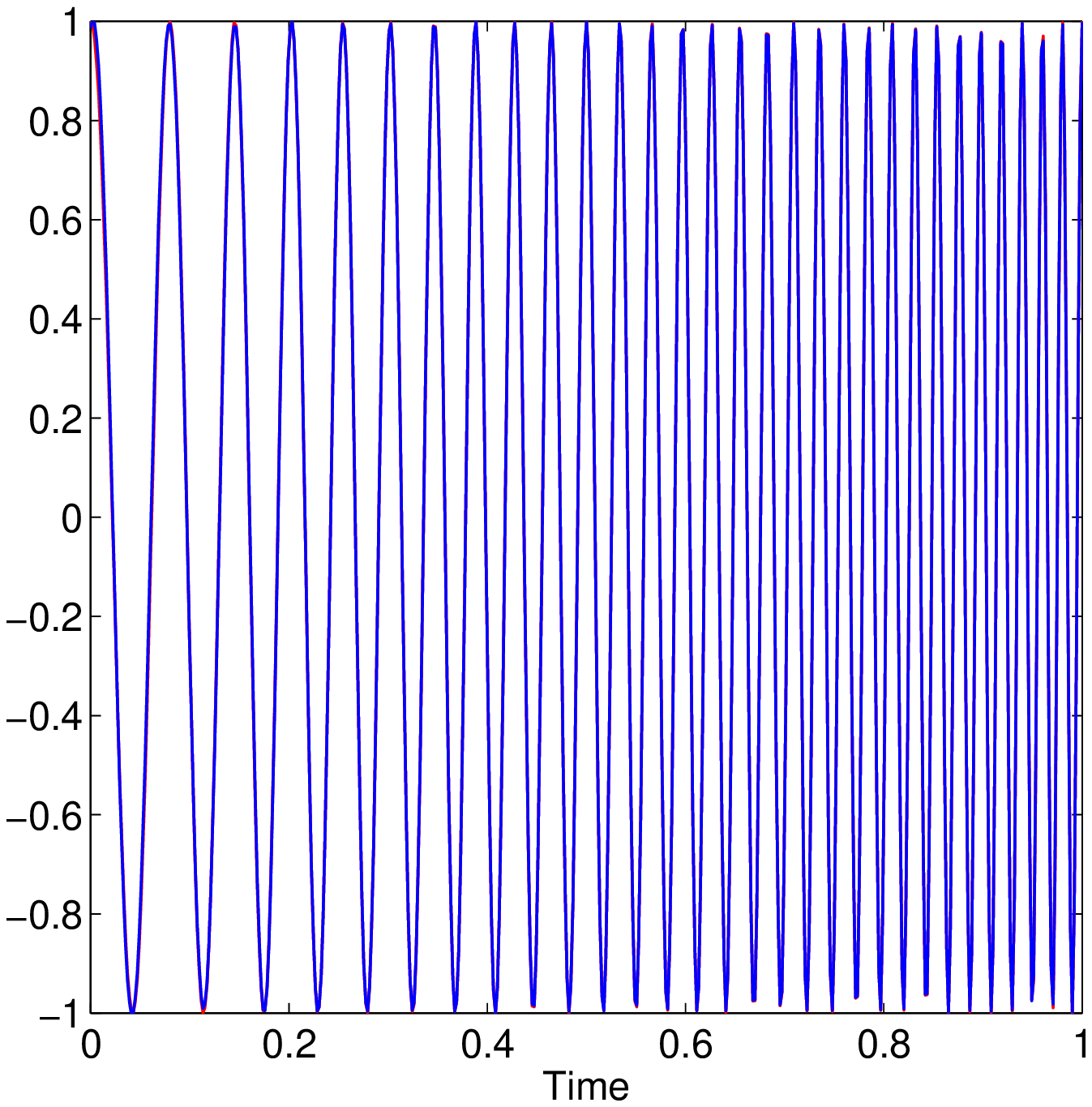}

     \end{center}
    \caption{ \label{cross-IF-mul} Left: Instantaneous frequencies; Middle (first component) and right (second component): IMFs obtained by 
extracting two IMFs together.
red: exact results; blue: numerical results.}
\end{figure}

In the previous example we consider, the scale separation is destroyed 
near the region where instantaneous frequencies of two different IMFs
intersect each other. In the following example, we consider another 
example with poor scale separation, but for a different reason. 
In this case, the frequencies of different IMFs are well separated, 
but the instantaneous frequency of an IMF is not smooth any more.
Specifically, we consider the following example:
\begin{eqnarray}
\label{data-jump}
&&f=\frac{1}{1.5+\cos 2\pi t}+\cos\theta_1+\cos\theta_2,\quad \theta_1(0)=\theta_2(0)=0,\\
&&  \theta'_1=\left\{\begin{array}{cc} 40\pi,& 0\le t\le 0.3,\\
60\pi,& 0.3<t\le 1.\end{array}\right.,\quad\quad
  \theta'_2=\left\{\begin{array}{cc} 140\pi,& 0\le t\le 0.6,\\
160\pi,& 0.6<t\le 1.\end{array}\right.\nonumber.
\end{eqnarray}
In this example, the instantaneous frequencies are discontinuous at 
$t=0.3$ and $t=0.6$ respectively. Even for such signal, our $l^1$ 
regularized nonlinear matching pursuit method still gives a reasonable
decomposition even if the signal is polluted with noise.
\begin{figure}

    \begin{center}

\includegraphics[width=0.45\textwidth]{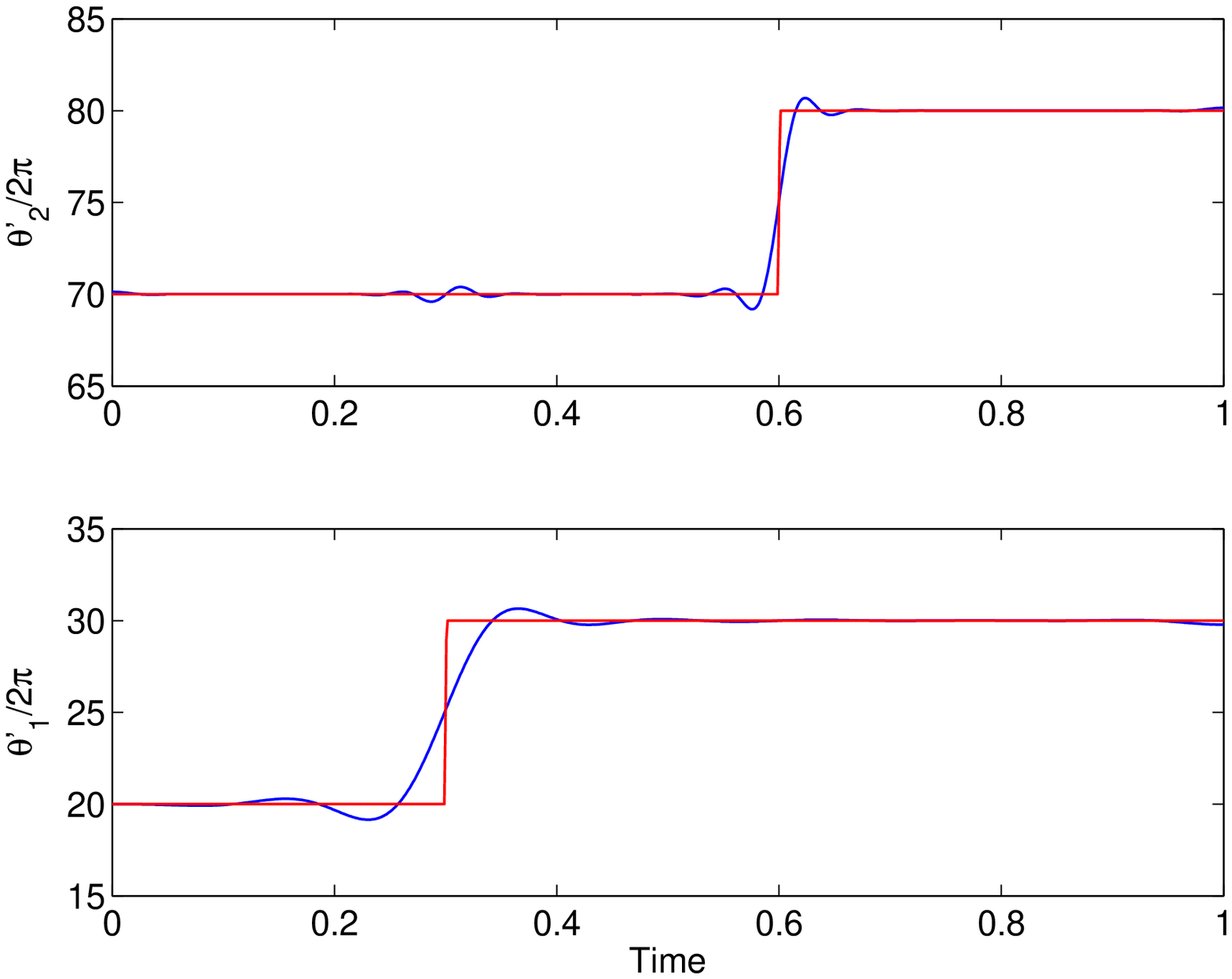}
\includegraphics[width=0.45\textwidth]{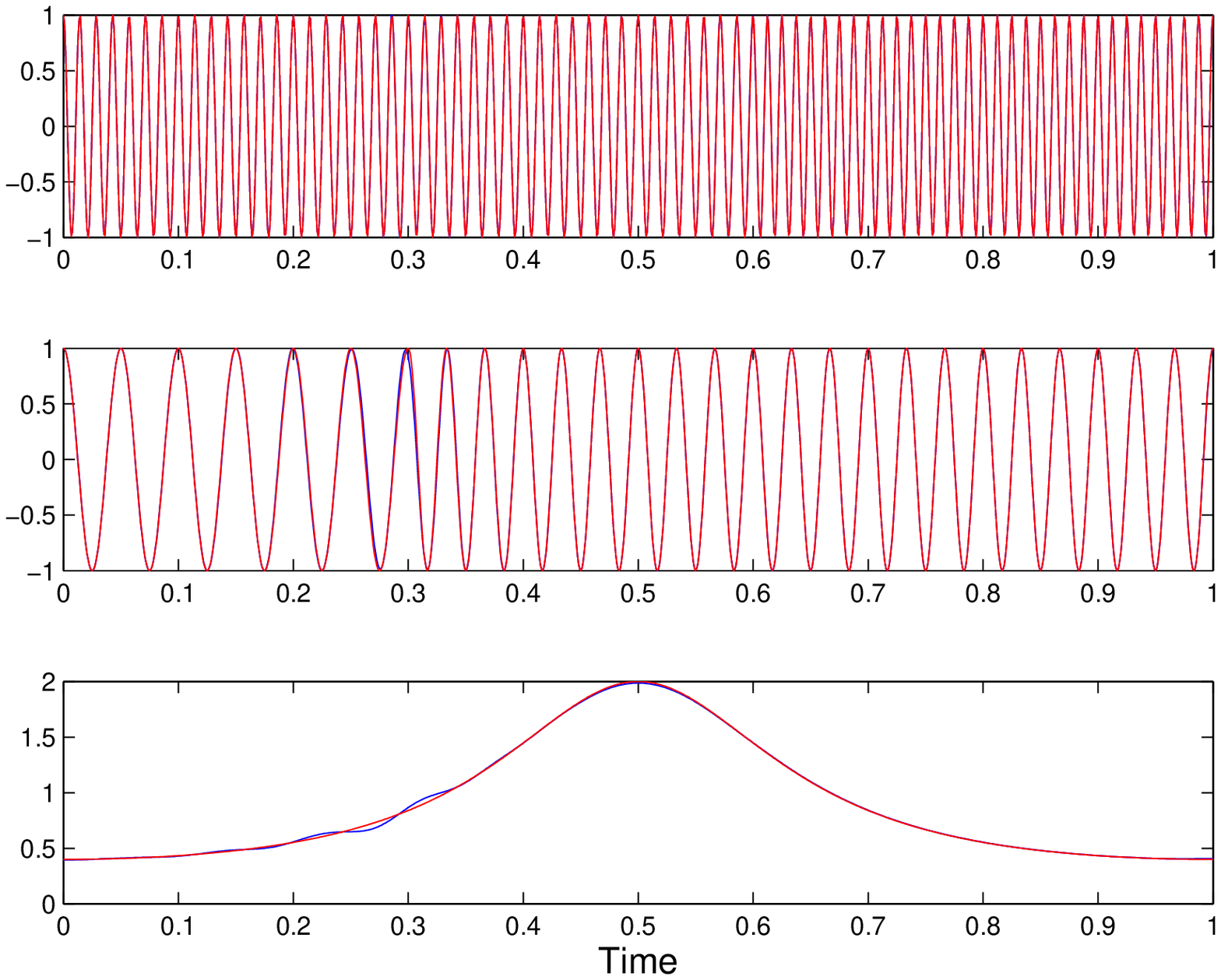}
     \end{center}
    \caption{ \label{result-jump} Instantaneous frequencies and IMFs given by the
$l^1$ regularized nonlinear matching pursuit for the data given in \myref{data-jump}. 
Left: Instantaneous frequencies; Right: IMFs obtained by our method.
red: exact results; blue: numerical results.}
\end{figure}

\begin{figure}

    \begin{center}

\includegraphics[width=0.45\textwidth]{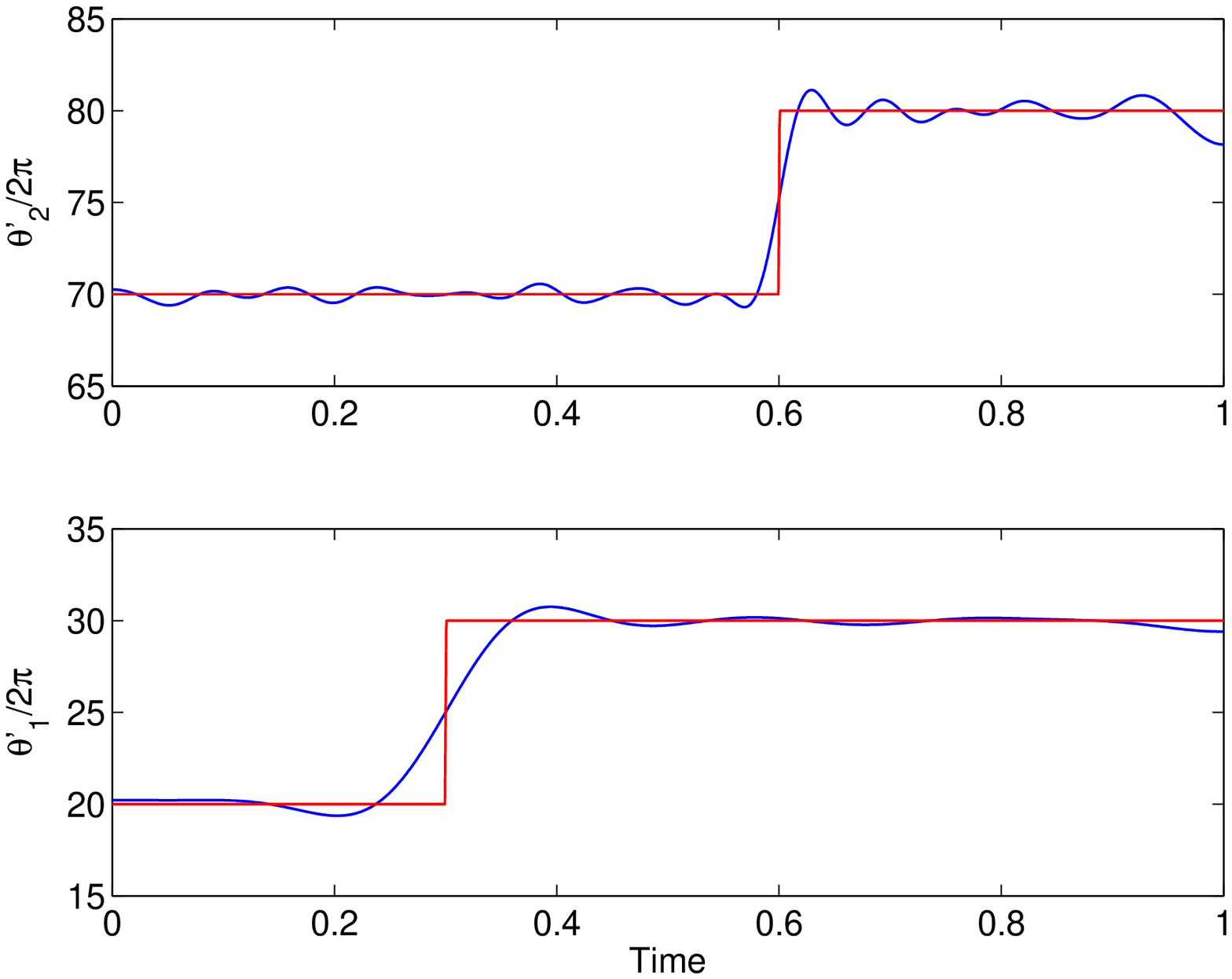}
\includegraphics[width=0.45\textwidth]{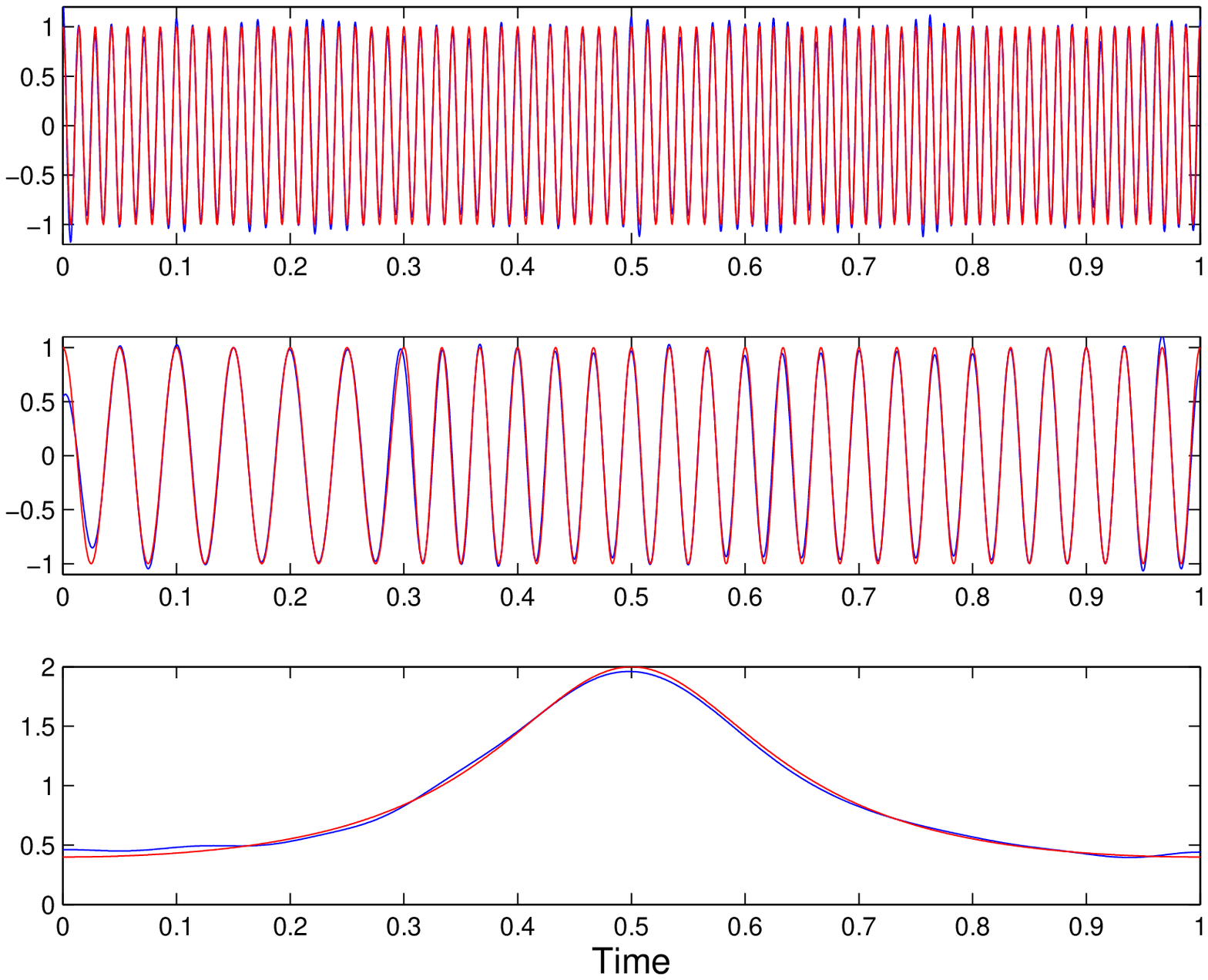}
     \end{center}
    \caption{ \label{result-jump-noise} 
Instantaneous frequencies and IMFs given by 
$l^1$ regularized nonlinear matching pursuit for the data $f(t)+0.3X(t)$, where $f$ is given in \myref{data-jump}
and $X(t)$ is Gaussian noise with standard derivation $\sigma^2=1$.
Left: Instantaneous frequencies; Right: IMFs obtained by our method.
red: exact results; blue: numerical results.}
\end{figure}
As we can see from Fig. \ref{result-jump}, 
there are some oscillations due to the Gibbs phenomenon
near the points of discontinuity $(t=0.3,0.6)$.  IMFs also have some 
errors near these two points. But in the region away from these two 
points, the numerical results match very well with the exact ones. 
These results also suggest that our $l^1$ regularized nonlinear 
matching pursuit has temporal locality property. The error is 
confined in a small region near the points where scale separation 
is poor. This property will be discussed further in Section 6. 
When the noise is added to this signal, the results we obtain still 
have a reasonable accuracy, see Fig. \ref{result-jump-noise}.

\subsection{Numerical results for data with intra-wave frequency modulation}

In some physical applications such as the stokes waves or some
nonlinear dynamic systems, we need to deal with data that have 
intra-wave frequency modulation. For this type of signals, they 
have the sparse decomposition:
\begin{eqnarray}
  f(t)=\sum_{k=1}^M a_k\cos\theta_k,
\end{eqnarray}
where $a_k$ are smooth envelopes, but their instantaneous frequencies, 
$\theta'_k$, are not smooth any more. Typically,
the phase function has the form
$\theta_k=\phi_k+\e \cos\left(\omega_k \phi_k\right)$, where 
$\phi_k$ is a smooth function, $\e>0$ is a small number and 
$\omega_k\ge 1$ is an integer. In the study of some nonlinear waves 
or dynamical systems, $\omega_k$ is an important parameter related to
the characteristic of the nonlinearity of the system.

An essential difficulty for this type of data is that the instantaneous 
frequency, $\theta'_k$, is as oscillatory as $\cos\theta_k$ or even more 
oscillatory. In the 
nonlinear matching pursuit method that we proposed in the sections, we 
assume that $a_k$ and $b_k$ are smoother than $\cos\theta_k$ and use 
these two coefficients to update $\theta_k$. In the case where the 
signal has strong intra-wave modulation, $\theta'_k$ is as oscillatory 
as $\cos\theta_k$. Thus our current method would not be able to give 
a good approximation of $\theta'_k$ unless we are given a very good 
initial guess for $\theta_k$. To overcome this difficulty, we 
introduce a shape function, $s_k$, to replace 
cosine function. The idea is to absorb the high frequency intra-wave
modulation into the shape function $s_k$. This will ensure that 
$\theta'_k$ is smoother than $s_k(\theta_k)$. This idea has been 
proposed by Hau-tieng Wu in \cite{Wu11}, but efficient algorithm to 
compute the shape function has not appear in the literature. 

Note that $s_k$ is not 
known {\it a priori} and is adapted to the signal. We need to learn 
$s_k$ from the physical signal. This consideration naturally motivates 
us to modify the construction of the dictionary as follows:
\begin{eqnarray}
  \label{dic-intra}
  \mathcal{M}=\left\{a_ks_k(\theta_k):\quad a_k, \theta'_k\in V(\theta_k), \;s_k\; \mbox{is $2\pi$-period function}\right\},
\end{eqnarray}
where $s_k$ is an unknown $2\pi$-periodic `shape function' and is 
adapted to the signal. If we choose $s_k$ to be cosine function, 
then $\mathcal{M}=\mathcal{D}$.

We have developed an efficient method based on our $l^1$ regularized 
nonlinear matching pursuit to recover the components with intra-wave 
frequency modulation by looking for the sparsest decomposition 
using this new dictionary $\mathcal{M}$. By exploring the fact that
$s_k$ is a periodic function of $\theta_k$, we can identify certain
low rank structure of the signal. This structure enables us to extract 
the shape function from the signal. Once we get an approximation of 
the shape function $s_k$, we can use $s_k$ to update $\theta_k$.
This process continues until it converges. The detail of 
this method will appear in another paper. Here, we give several 
numerical examples to demonstrate the capability of our method.

The first example is the solution of the Duffing equation.
This is an important example to demonstrate the importance of the 
intra-wave frequency modulation. Our method can handle this signal 
even with noise.

The Duffing equation is a nonlinear ODE which has the following form:
\begin{eqnarray}
  \label{duffing-1}
  \frac{d^2u}{dt^2}+u+\e u^{1+\omega}=\gamma\cos(\beta t).
\end{eqnarray}
The parameters, $\e,\gamma,\omega$, that we use here to generate the 
solution in Fig. \ref{duffing}, are the same as those in
the paper \cite{Huang98}, $\e=-1,\,\gamma=0.1,\,\beta=\frac{1}{25}$
and $\omega = 2$. The initial condition is $u(0)=u'(0)=1$.

In Fig. \ref{duffing}, we plot the shape function that we obtain from 
the solution of the Duffing equation. In this example, we can express
$s_k(\theta_k)$ in terms of $\cos\widetilde{\theta_k}$, from which
we can recover the instantaneous frequency of the signal. More 
interestingly, from the Fourier coefficients of the shape function, 
we can recover the information regarding the nonlinearity
of the Duffing equation, which is $\omega=2$ in this case. 
We refer to a recent work of Prof. Norden Huang for more discussions
of how to extract the degree of nonlinearity using EMD 
(see Dr. Huang's lecture in the IMA Hot Topic Workshop on Trend 
and Instantaneous Frequency, September 7-9, 2011, IMA).

\begin{figure}

    \begin{center}

\includegraphics[width=0.3\textwidth]{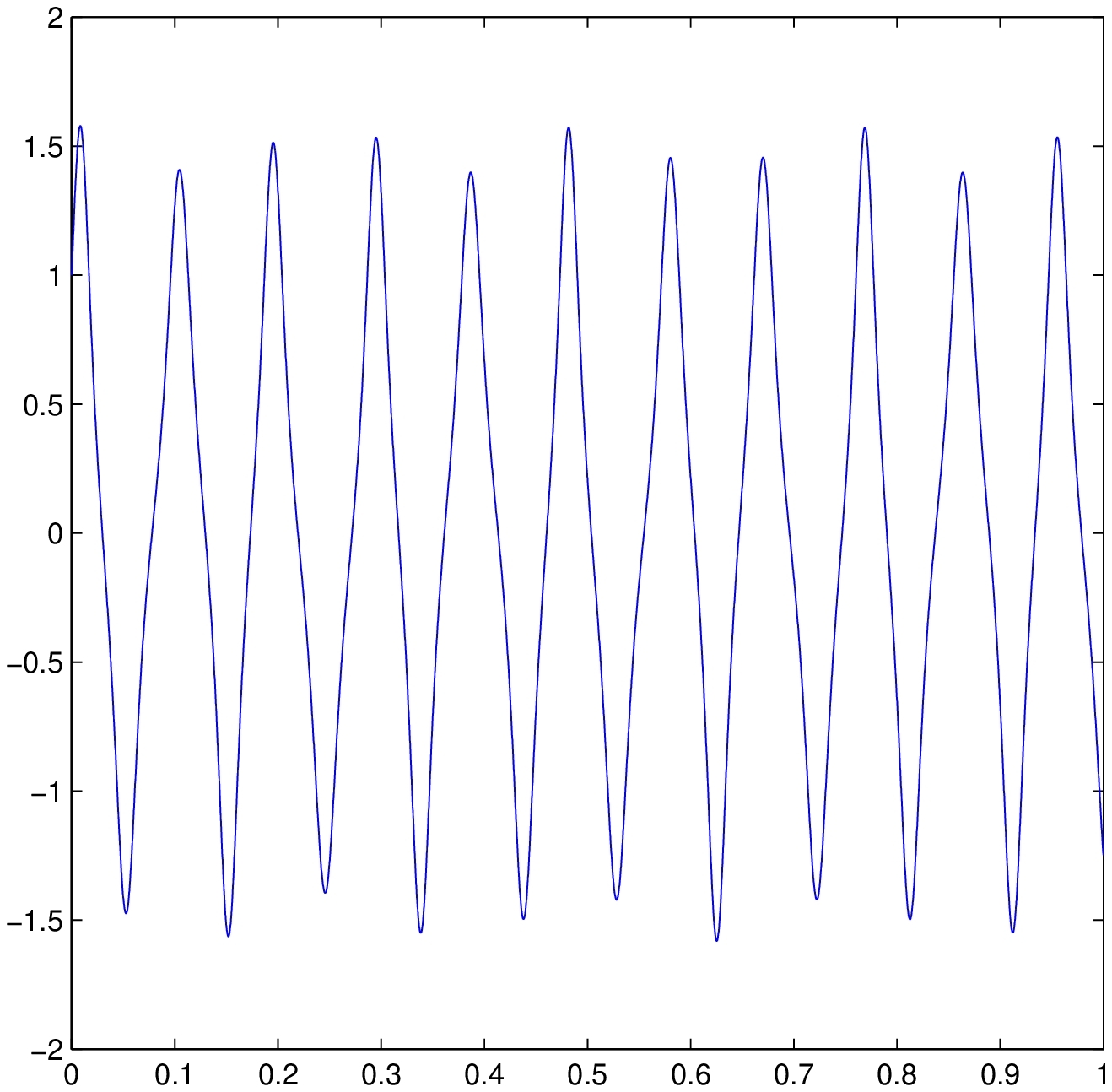}
\includegraphics[width=0.3\textwidth]{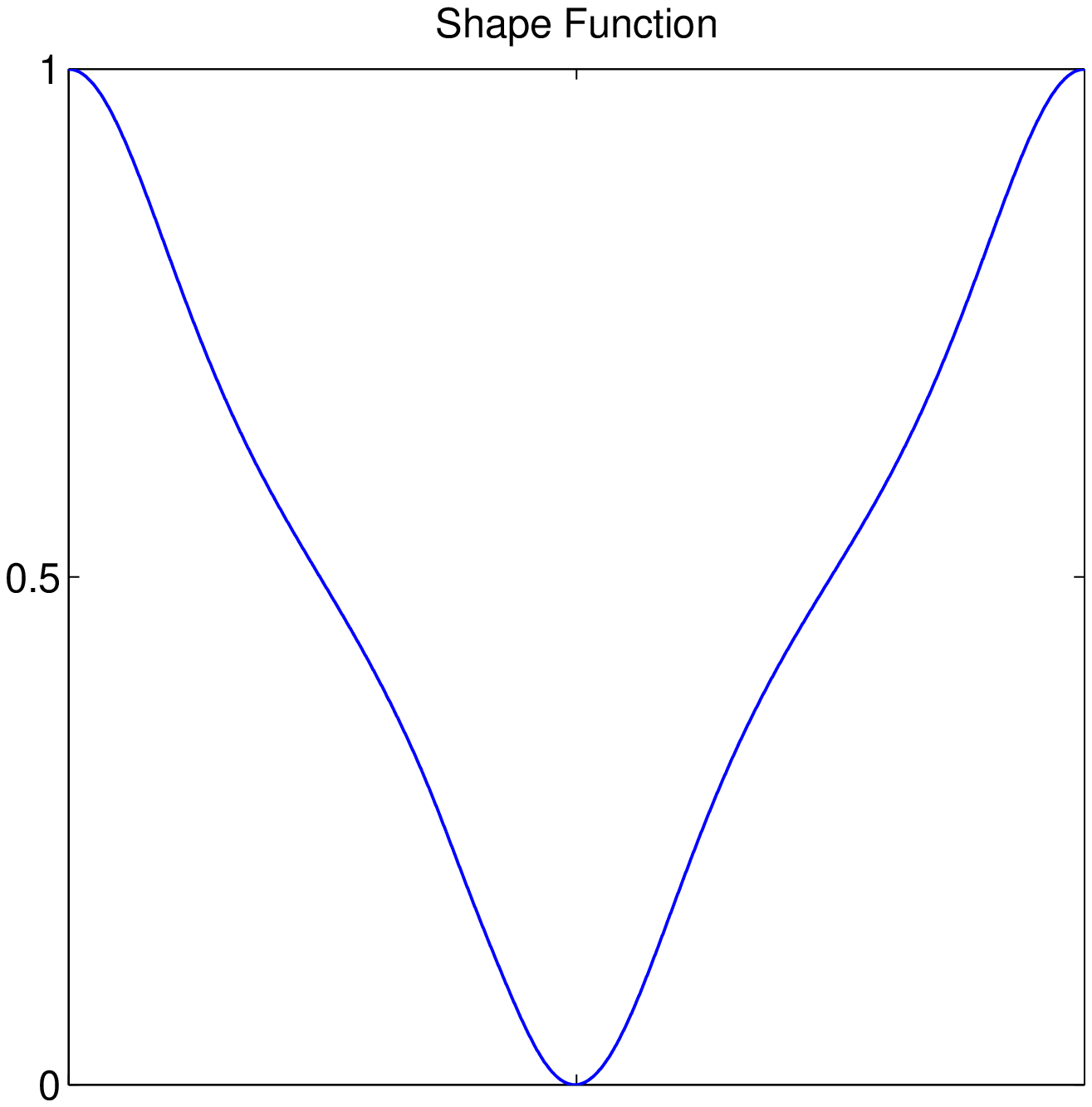}
\includegraphics[width=0.3\textwidth]{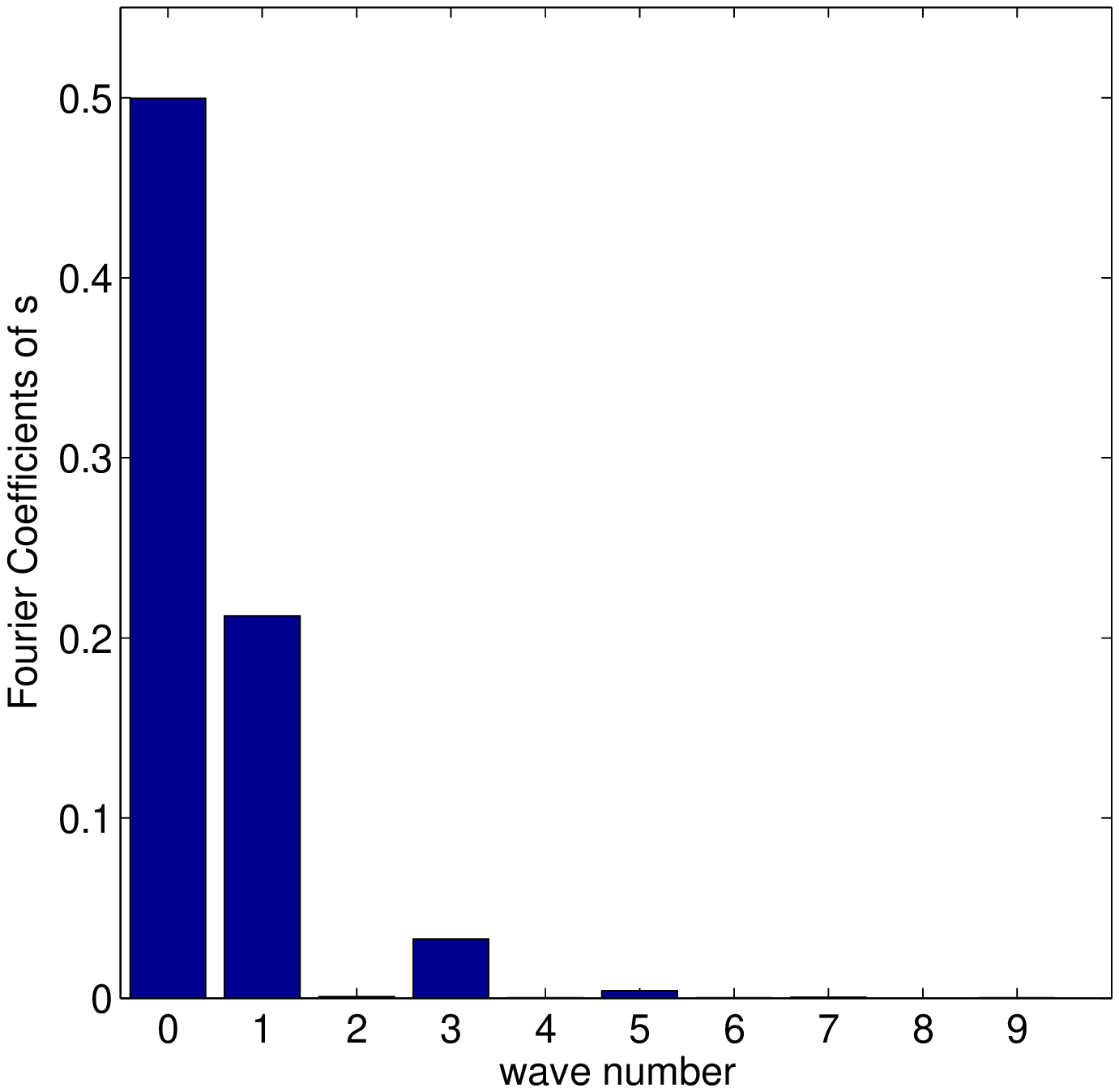}

     \end{center}
    \caption{ \label{duffing}Left: the solution of duffing equation;
Middle: the shape function $s$; Right: the Fourier coefficients of $s$.}
\end{figure}

We also add Gaussian noise $X(t)$ with covariance 
$\sigma^2=1$ to the original solution of the Duffing equation.
Fig. \ref{duffing-noise} shows the corresponding results. We can see 
that the shape function extracted from the noisy signal still keeps 
the main characteristics of the shape function extracted from the 
signal without noise. We can also clearly extract the
degree of nonlinearity, $\omega = 2$, even with such large noise
perturbation to the solution of the Duffing equation.
\begin{figure}

    \begin{center}

\includegraphics[width=0.3\textwidth]{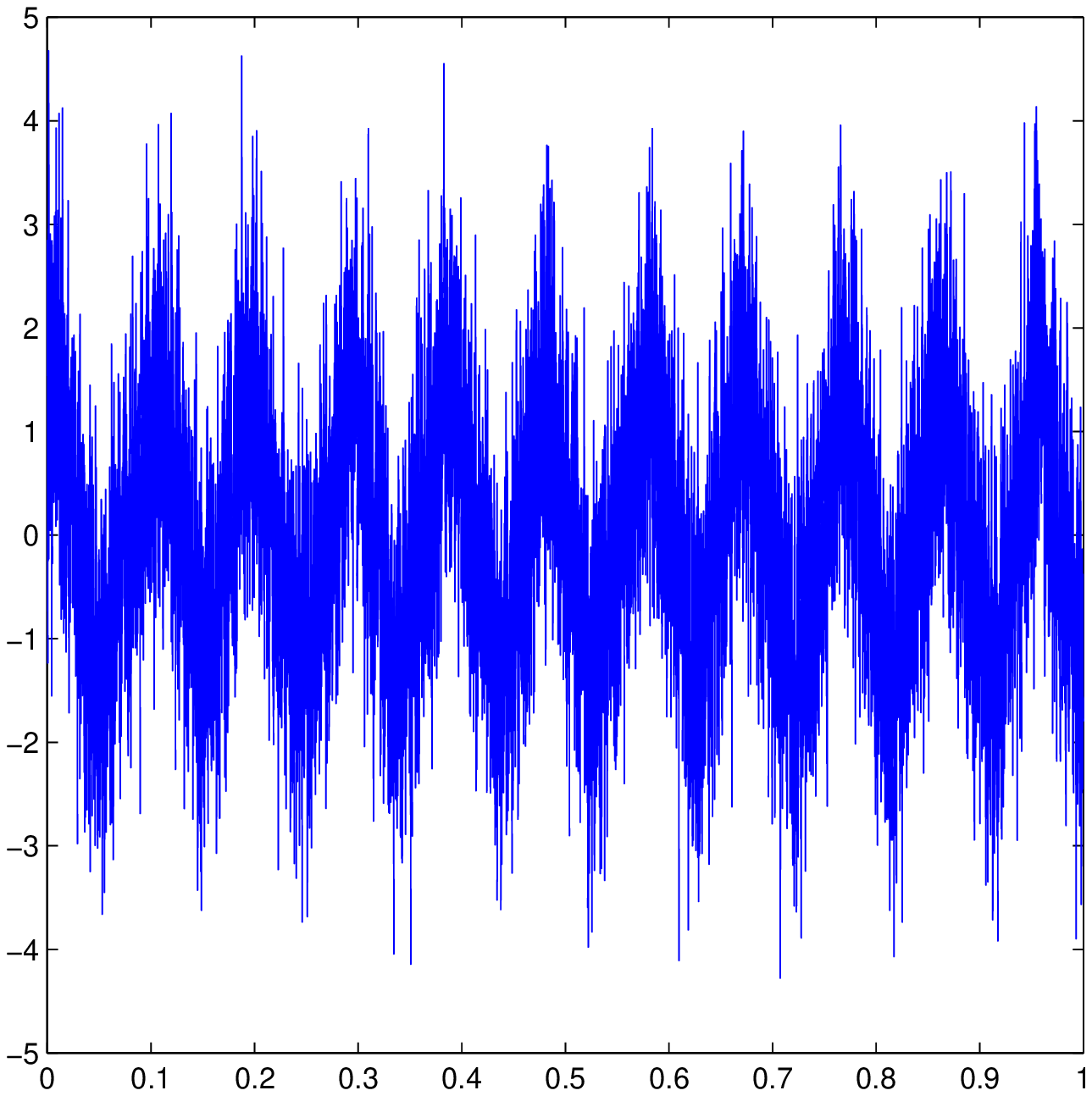}
\includegraphics[width=0.3\textwidth]{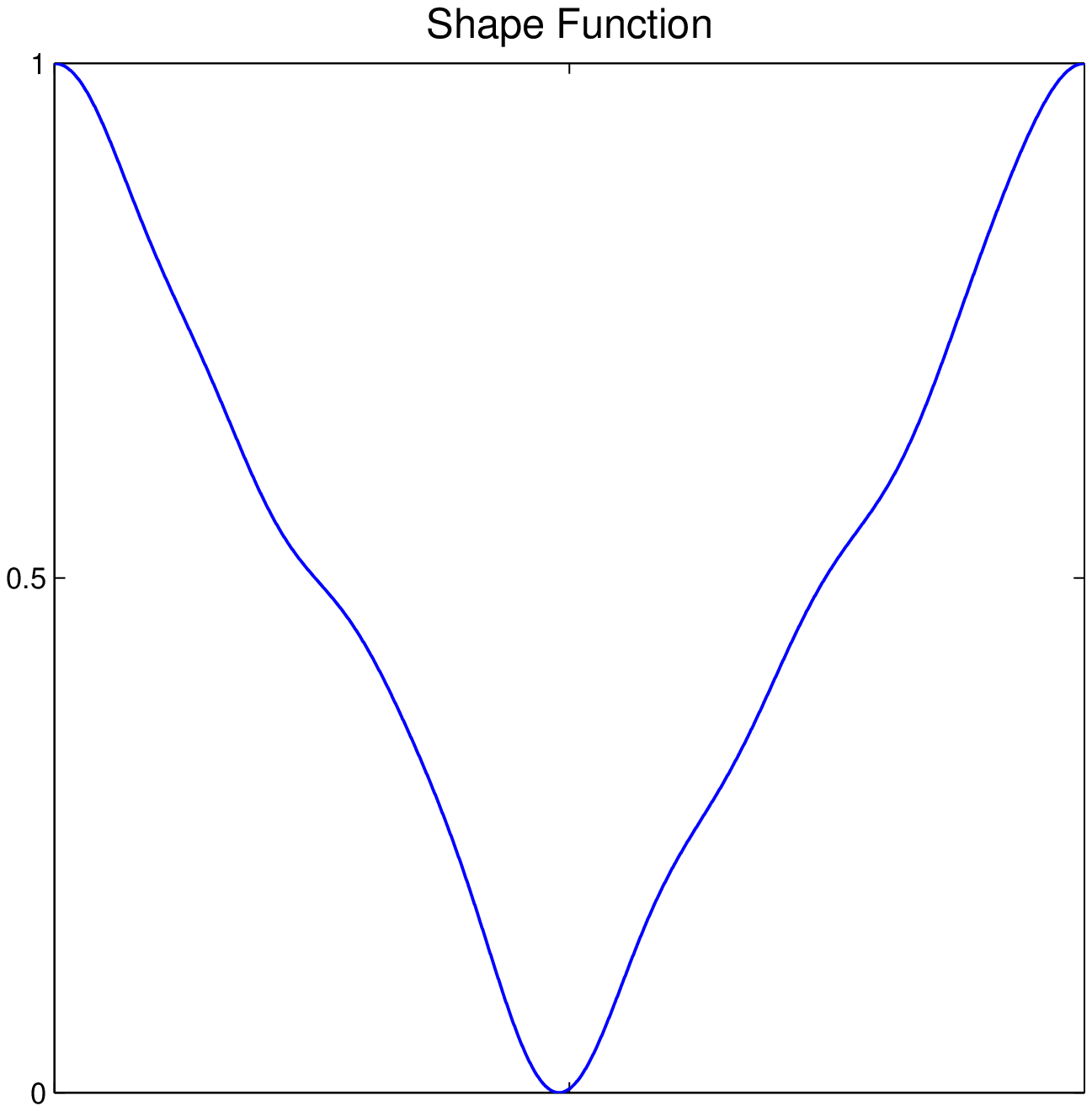}
\includegraphics[width=0.3\textwidth]{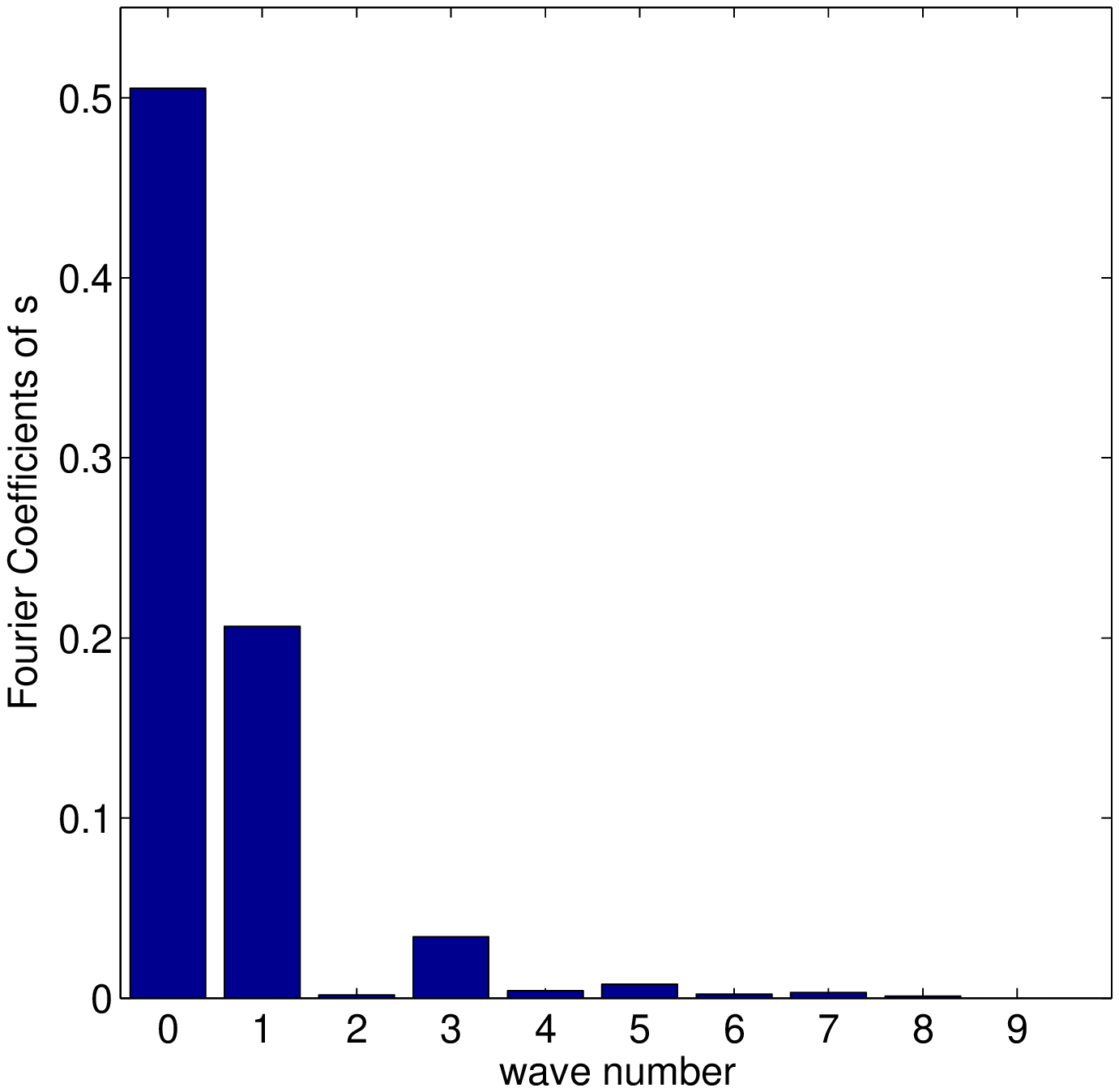}

     \end{center}
    \caption{ \label{duffing-noise}Left: the solution of duffing equation with noise $X(t)$;
Middle: the shape function $s$; Right: the Fourier coefficients of $s$.}
\end{figure}

\section{Some preliminary error analysis for the data with scale separation}
\label{analysis}

In this section, we perform some preliminary error analysis for our
nonlinear matching pursuit method. To guarantee uniqueness of the
decomposition, we need to impose certain scale separation property
for the data that we try to decompose. Before we state our result, we
first define what we mean by scale separation for a given signal.

\begin{definition}
  [Scale-separation]
\label{scale-seperation}
One function $f(t)=a(t)\cos\theta(t)$ is said to satisfy a
scale-separation property with a separation factor $\e >0$,
if $a(t)$ and $\theta(t)$ satisfy the following conditions:
\begin{eqnarray*}
 && a(t)\in C^1(\mathbb{R}),\; \theta\in C^2(\mathbb{R}),\\
&&\inf_{t\in \mathbb{R}} \theta'(t)>0,\quad
M=\sup_{t\in \mathbb{R}}|\theta''(t)|<\infty\\
&& \left|\frac{a'(t)}{\theta'(t)}\right|,\; \left|\frac{\theta''(t)}{\left(\theta'(t)\right)^2}\right|\le \e,\quad \forall t\in
\mathbb{R}.
\end{eqnarray*}
\end{definition}
\begin{definition}
  [Well-separated signal]
\label{well-seperated}
A signal $f: \mathbb{R}\rightarrow \mathbb{R}$ is said to be
well-separated with separation factor $\e$ and frequency
ratio $d>1$ if it can be written as
\begin{eqnarray*}
  f(t)=\sum_{k=1}^Ka_k(t)\cos\theta_k(t)
\end{eqnarray*}
where all $f_k(t)=a_k(t)\cos\theta_k(t)$ satisfies the
scale-separation property with separation factor $\e$, and
their phase function $\theta_k$ satisfies
\begin{eqnarray}
  \label{seperation-IMF}
\theta_k'(t)\ge d \theta_{k-1}'(t),\quad \forall t\in \mathbb{R}.
\end{eqnarray}
\end{definition}
\begin{theorem}
\label{main}
  Let $f(t)$ be a function satisfying the scale-separation property with
separation factor $\e$ and frequency ratio $d$ as defined in
Definition \ref{well-seperated}. Choose a low-pass filter $\phi$
such that its Fourier Transform $\widehat{\phi}$ has support in
$[-\Delta, \Delta]$ with $\Delta < \frac{d-1}{d+1/2}$ and
$\widehat{\phi}(k)=1,\;\forall k\in [-\Delta/2,\Delta/2]$.
If in the $n$th step, the approximate phase function
$\theta_{k_0}^n$ satisfies the following condition:
\begin{eqnarray}
 \left|\frac{\left(\theta_{k_0}^n\right)'(t)}{\theta_{k_0}'(t)}-1\right|<\frac{\Delta}{2},\quad
  \left|\frac{\left(\theta_{k_0}^n\right)''(t)}{\left(\left(\theta_{k_0}^n\right)'(t)\right)^2}\right|\le \e,
\end{eqnarray}
then the accuracy of the phase function in the next step is order $\e$, i.e.
\begin{eqnarray}
  \left|\theta_{k_0}^{n+1}(t)-\theta_{k_0}(t)\right|=O(\e).
\end{eqnarray}
\end{theorem}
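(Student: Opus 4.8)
The plan is to carry the entire analysis at the level of the instantaneous frequency, as in \myref{dtheta}, so that no phase-unwrapping difficulty arises, and to read off the one-step error from the complex envelope $a^{n+1}_{k_0}+\mathrm{i}\,b^{n+1}_{k_0}$ produced by Steps~1--2 of the FFT-based iteration. First I would change variables to the coordinate $s=\theta^n_{k_0}(t)$, which is legitimate because $(\theta^n_{k_0})'>0$, and recognize that the index shift $\widehat r_{\theta^n}(k+1)$ together with the inverse transform realizes a band-pass filtering of $r$, viewed as a function of $s$, around the fundamental wavenumber of $\cos\theta^n_{k_0}$ with the window $\widehat\phi$, followed by demodulation of the selected band to baseband. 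The whole statement then reduces to the single estimate
\[
a^{n+1}_{k_0}+\mathrm{i}\,b^{n+1}_{k_0}=a_{k_0}(t)\,e^{\mathrm{i}\,(\theta^n_{k_0}(t)-\theta_{k_0}(t))}\bigl(1+O(\e)\bigr),
\]
for once this is known, $\arctan(b^{n+1}_{k_0}/a^{n+1}_{k_0})$ recovers the phase mismatch $\theta^n_{k_0}-\theta_{k_0}$ up to $O(\e)$; taking $\lambda=1$ (which is admissible since the corrected frequency is $\theta_{k_0}'+O(\e)>0$) and integrating the frequency identity \myref{dtheta} yields $|\theta^{n+1}_{k_0}-\theta_{k_0}|=O(\e)$.

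The core of the proof is to establish the displayed identity by splitting the residual into the target component $a_{k_0}\cos\theta_{k_0}$ and the remainder. Writing $\cos\theta_{k_0}=\tfrac12(e^{\mathrm{i}\theta_{k_0}}+e^{-\mathrm{i}\theta_{k_0}})$, the relevant piece in the $s$-coordinate has instantaneous wavenumber $\theta_{k_0}'/(\theta^n_{k_0})'$, and the hypothesis $|(\theta^n_{k_0})'/\theta_{k_0}'-1|<\Delta/2$ places it, after demodulation, inside the flat band $[-\Delta/2,\Delta/2]$ on which $\widehat\phi=1$, while its conjugate is mapped to a wavenumber of modulus $\approx 2$ and is annihilated because $\Delta<(d-1)/(d+1/2)<1$. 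The only obstruction to exact reproduction is the spectral spreading caused by the variation of $a_{k_0}$ and of $\theta_{k_0}-\theta^n_{k_0}$; I would control it through the scale-separation bounds of Definition~\ref{scale-seperation}, namely $|a_{k_0}'/\theta_{k_0}'|\le\e$ and $|\theta_{k_0}''/(\theta_{k_0}')^2|\le\e$, together with the analogous bound assumed for $\theta^n_{k_0}$, which force the $s$-logarithmic derivative of $a_{k_0}e^{\mathrm{i}(\theta_{k_0}-\theta^n_{k_0})}$ to be $O(\e)$, so that this slowly varying profile is returned by the smooth window up to relative error $O(\e)$.

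Next I would show that every other component $a_k\cos\theta_k$, $k\ne k_0$, contributes only $O(\e)$ to the envelope because its spectrum lies outside the support of the band-pass window. The negative-frequency parts all sit below the baseband band and are harmless; for the positive-frequency part of the nearest higher neighbour the demodulated wavenumber is at least $\theta_{k_0+1}'/(\theta^n_{k_0})'-1\ge d/(1+\Delta/2)-1$, and this exceeds $\Delta$ under the hypothesis $\Delta<(d-1)/(d+1/2)$, which is precisely the role played by that inequality. The lower neighbours are pushed symmetrically below $-\Delta$ using $\theta_{k_0}'\ge d\,\theta_{k_0-1}'$, and the first-order vanishing of the window \myref{cutoff-cosine} at the band edge absorbs any residual $O(\Delta^2)$ overlap into the $O(\e)$ error.

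I expect the main obstacle to be the quantitative $O(\e)$ control of the filtering error on the target in the second step, rather than the contamination estimate, which is purely a matter of spectral support. Concretely, I must prove a lemma stating that a chirped, amplitude-modulated profile whose logarithmic derivative in the $s$-variable is $O(\e)$ is reproduced by the fixed smooth low-pass window up to relative error $O(\e)$; making this rigorous requires estimating the tail of the profile's spectrum that leaks past the flat part $[-\Delta/2,\Delta/2]$ and bounding the distortion of the window there, and it is exactly here that the regularity assumptions $a_{k_0}\in C^1$, $\theta_{k_0}\in C^2$ and the uniform bound $M=\sup|\theta_{k_0}''|$ enter. Once this lemma and the two separation estimates are assembled, the envelope identity follows and integrating \myref{dtheta} gives the claimed accuracy $|\theta^{n+1}_{k_0}-\theta_{k_0}|=O(\e)$.
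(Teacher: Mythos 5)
Your proposal follows essentially the same route as the paper's proof: pass to the $\theta^n_{k_0}$-coordinate, interpret the FFT step as convolution with the low-pass filter $\phi$, split $f$ via the product-to-sum identity into the resonant difference-frequency term of the $k_0$-th component (which lands in the flat band where $\widehat{\phi}=1$) and all remaining terms (whose demodulated frequencies the hypotheses $\Delta<\frac{d-1}{d+1/2}$ and $\left|(\theta^n_{k_0})'/\theta_{k_0}'-1\right|<\Delta/2$ push outside the support of $\widehat{\phi}$), and conclude via $\arctan(b/a)$. The only substantive difference is that the paper establishes the key reproduction estimate (its Lemma \ref{conv-lemma}) by a direct time-domain Taylor expansion of the phase, bounding the error by the filter moments $I_1,I_2$ times $\sup|a'|$ and $\sup|\theta''|$, rather than by the spectral-leakage argument you sketch for your auxiliary lemma; both yield the same $O(\e)$ bound.
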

In order to prove the above theorem, we need the following lemma:
\begin{lemma}
  \label{conv-lemma}
For any $a(t)\in C^1(\mathbb{R})$, $\theta\in C^2(\mathbb{R})$, we have
\begin{eqnarray}
\left|\int a(\tau)e^{-i\theta(\tau)}\; \phi(\tau-t) d\tau-a(t)e^{-i\theta(t)}\widehat{\phi}\left(\theta'(t)\right)\right|
\le \sup |a'(t)|I_1+\frac{1}{2}|a(t)|\sup |\theta''(t)|I_2 ,
\end{eqnarray}
where $I_n=\int|t^n\phi(t)|dt$.
\end{lemma}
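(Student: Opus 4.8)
The plan is to prove Lemma \ref{conv-lemma} by a direct estimate comparing the true convolution $\int a(\tau)e^{-i\theta(\tau)}\phi(\tau-t)\,d\tau$ against the idealized value $a(t)e^{-i\theta(t)}\widehat{\phi}(\theta'(t))$. The key observation is that the idealized value is exactly what one obtains if $a$ were frozen at $a(t)$ and $\theta$ were replaced by its first-order Taylor expansion $\theta(t)+\theta'(t)(\tau-t)$ about the base point $t$. Indeed, under that linearization the integral becomes $a(t)e^{-i\theta(t)}\int e^{-i\theta'(t)(\tau-t)}\phi(\tau-t)\,d\tau$, and after the substitution $s=\tau-t$ this is precisely $a(t)e^{-i\theta(t)}\widehat{\phi}(\theta'(t))$ by the definition of the Fourier transform of $\phi$. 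So the whole lemma reduces to bounding the error introduced by (i) replacing $a(\tau)$ with $a(t)$ and (ii) replacing $\theta(\tau)$ with its linearization.

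First I would write the difference as a single integral,
\begin{eqnarray*}
\int \left[a(\tau)e^{-i\theta(\tau)}-a(t)e^{-i\theta(t)}e^{-i\theta'(t)(\tau-t)}\right]\phi(\tau-t)\,d\tau,
\end{eqnarray*}
and then split the integrand into two telescoping pieces. The first piece, $\bigl(a(\tau)-a(t)\bigr)e^{-i\theta(\tau)}\phi(\tau-t)$, handles the variation of the amplitude: using the mean value theorem $|a(\tau)-a(t)|\le \sup|a'|\,|\tau-t|$ and $|e^{-i\theta(\tau)}|=1$, this contributes at most $\sup|a'(t)|\int|\tau-t|\,|\phi(\tau-t)|\,d\tau=\sup|a'(t)|\,I_1$. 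The second piece, $a(t)e^{-i\theta(t)}\bigl(e^{-i(\theta(\tau)-\theta(t))}-e^{-i\theta'(t)(\tau-t)}\bigr)\phi(\tau-t)$, handles the curvature of the phase.

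For the phase piece the main tool is the elementary inequality $|e^{ix}-e^{iy}|\le |x-y|$ together with Taylor's theorem with remainder. Writing $\theta(\tau)-\theta(t)-\theta'(t)(\tau-t)=\tfrac12\theta''(\xi)(\tau-t)^2$ for some $\xi$ between $t$ and $\tau$, we get $\bigl|e^{-i(\theta(\tau)-\theta(t))}-e^{-i\theta'(t)(\tau-t)}\bigr|\le \tfrac12\sup|\theta''|\,(\tau-t)^2$. Multiplying by $|a(t)|\,|\phi(\tau-t)|$ and integrating yields the bound $\tfrac12|a(t)|\sup|\theta''(t)|\int (\tau-t)^2|\phi(\tau-t)|\,d\tau=\tfrac12|a(t)|\sup|\theta''(t)|\,I_2$. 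Adding the two contributions gives exactly the claimed estimate, after the substitution $s=\tau-t$ identifies the integrals with $I_1$ and $I_2$.

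I expect the only delicate point to be the clean identification of the idealized term $a(t)e^{-i\theta(t)}\widehat{\phi}(\theta'(t))$ with the linearized integral, i.e.\ getting the sign and the argument of $\widehat{\phi}$ right (one must check that $\int e^{-i\theta'(t)s}\phi(s)\,ds=\widehat{\phi}(\theta'(t))$ under the paper's Fourier convention). Everything else is a routine application of the triangle inequality, the mean value theorem, and the second-order Taylor remainder, so the main work is simply organizing the two telescoping error terms and confirming that the constants $I_1$ and $I_2$ appear with the stated coefficients.
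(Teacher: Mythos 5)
Your proposal is correct and follows essentially the same route as the paper: the same splitting into an amplitude-variation term (bounded via the mean value theorem by $\sup|a'|\,I_1$) and a phase-linearization term (bounded via the second-order Taylor remainder and $|e^{iz}-1|\le|z|$ by $\tfrac12|a(t)|\sup|\theta''|\,I_2$). The sign detail you flag is real but harmless — the paper's displayed intermediate line $e^{-i(\theta(t)-\theta'(t)(\tau-t))}$ is a typo for $e^{-i(\theta(t)+\theta'(t)(\tau-t))}$, which is what the paper actually uses in the subsequent lines and what your version has.
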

\begin{proof} The proof follows from the following direct calculations:
\begin{eqnarray}
  \label{eq-cos1}
&&\left|\int a(\tau)e^{-i\theta(\tau)}\; \phi(\tau-t) d\tau-a(t)e^{-i\theta(t)}\widehat{\phi}\left(\theta'(t)\right)\right|\nonumber\\
&=&\left|\int \left(a(\tau)-a(t)\right)e^{i\theta(\tau)}\; \phi(\tau-t) d\tau +
a(t)\int \left(e^{-i\theta(\tau)}-e^{-i\left(\theta(t)-\theta'(t)(\tau-t)\right)}\right)\; \phi(\tau-t) d\tau\right|\nonumber\\
&=&\left|\int \left(a(\tau)-a(t)\right)e^{i\theta(\tau)}\; \phi(\tau-t) d\tau +
a(t)\int \left(e^{-i\left(\theta(\tau)-\theta(t)-\theta'(t)(\tau-t)\right)}-1\right)e^{-i\left(\theta(t)-\theta'(t)(\tau-t)\right)}\;
\phi(\tau-t) d\tau\right|\nonumber\\
&\le& \sup |a'(t)|\int |\tau\phi(\tau)|d\tau+|a(t)|\left|\int \left(e^{-\frac{1}{2}i\theta''(s(\tau))(\tau-t)^2}-1\right)
 e^{-i\left(\theta(t)+\theta'(t)(\tau-t)\right)}\phi(\tau-t) d\tau\right|\nonumber\\
&\le& \sup |a'(t)|\int |\tau\phi(\tau)|d\tau+|a(t)|\int\left| \frac{1}{2}\theta''(s(\tau))(\tau-t)^2
 \phi(\tau-t)\right| d\tau\nonumber\\
&\le& \sup |a'(t)|\int |\tau\phi(\tau)|d\tau+\frac{1}{2}|a(t)|\sup |\theta''(t)|\int\left|\tau^2
 \phi(\tau)\right| d\tau\nonumber\\
&=& \sup |a'(t)|I_1+\frac{1}{2}|a(t)|\sup |\theta''(t)|I_2
\end{eqnarray}
\end{proof}

\begin{remark}
We remark that since we typically deal with data of finite
support and extend them periodically to the whole domain, the
estimates for $I_1$ and $I_2$ in the above lemma are effectively
taken only in the finite support of the data.
\end{remark}

\begin{corollary}
If the Fourier Transform of the low-pass filter $\phi$ is symmetric,
i.e. $\widehat{\phi}(k)=\widehat{\phi}(-k)$, then we have
\label{coro}
  \begin{eqnarray}
\label{esti-cos}
\left|\int a(\tau)\cos\left(\theta(\tau)\right) \phi(\tau-t) d\tau-a(t)\cos\theta(t)\widehat{\phi}\left(\theta'(t)\right)\right|
\le \sup |a'(t)|I_1+\frac{1}{2}|a(t)|\sup |\theta''(t)|I_2\\
\label{esti-sin}
\left|\int a(\tau)\sin\left(\theta(\tau)\right) \phi(\tau-t) d\tau-a(t)\sin\theta(t)\widehat{\phi}\left(\theta'(t)\right)\right|
\le \sup |a'(t)|I_1+\frac{1}{2}|a(t)|\sup |\theta''(t)|I_2 .
\end{eqnarray}
\end{corollary}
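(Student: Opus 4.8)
The plan is to reduce both estimates to Lemma \ref{conv-lemma} by writing the cosine and sine in terms of complex exponentials. Specifically, I would use $\cos\theta(\tau)=\frac{1}{2}\left(e^{i\theta(\tau)}+e^{-i\theta(\tau)}\right)$ and $\sin\theta(\tau)=\frac{1}{2i}\left(e^{i\theta(\tau)}-e^{-i\theta(\tau)}\right)$, so that each of the two integrals appearing in the corollary splits into a contribution carrying $e^{-i\theta}$ and one carrying $e^{+i\theta}$. The same splitting applies to the main terms $a(t)\cos\theta(t)\widehat{\phi}(\theta'(t))$ and $a(t)\sin\theta(t)\widehat{\phi}(\theta'(t))$ on the right-hand side.

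For the $e^{-i\theta}$ contribution, Lemma \ref{conv-lemma} applies verbatim and produces exactly the bound $\sup|a'(t)|I_1+\frac{1}{2}|a(t)|\sup|\theta''(t)|I_2$, with main term $a(t)e^{-i\theta(t)}\widehat{\phi}(\theta'(t))$. For the $e^{+i\theta}$ contribution, I would apply the same lemma but with $\theta$ replaced by $-\theta$. Since $\sup|(-\theta)''|=\sup|\theta''|$, the right-hand bound is unchanged, and the lemma then yields the main term $a(t)e^{+i\theta(t)}\widehat{\phi}(-\theta'(t))$. This is precisely where the symmetry hypothesis $\widehat{\phi}(k)=\widehat{\phi}(-k)$ is used: it lets me replace $\widehat{\phi}(-\theta'(t))$ by $\widehat{\phi}(\theta'(t))$, so that both exponential pieces carry the common factor $\widehat{\phi}(\theta'(t))$.

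Combining the two pieces is then immediate. For the cosine, the difference $\int a(\tau)\cos\theta(\tau)\phi(\tau-t)\,d\tau - a(t)\cos\theta(t)\widehat{\phi}(\theta'(t))$ equals one half the sum of the two exponential errors, each of which obeys the bound $\sup|a'(t)|I_1+\frac{1}{2}|a(t)|\sup|\theta''(t)|I_2$; applying the triangle inequality gives \myref{esti-cos}, the two factors of $\frac{1}{2}$ exactly compensating the two contributions. The sine estimate \myref{esti-sin} follows identically, now writing the difference as one half the signed difference of the same two exponential errors and using $|1/(2i)|=1/2$.

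The computation is entirely routine once the lemma is in hand; the only point requiring any care is the role of the symmetry assumption, which is exactly what guarantees that the reconstruction coefficient $\widehat{\phi}(\theta'(t))$ is common to both the $e^{+i\theta}$ and $e^{-i\theta}$ terms. Without it, the two main terms would involve $\widehat{\phi}(\pm\theta'(t))$ separately and would not recombine into $a(t)\cos\theta(t)\widehat{\phi}(\theta'(t))$ (respectively the sine version), so this is the one hypothesis I would be careful to invoke explicitly.
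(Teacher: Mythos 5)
Your proof is correct and is essentially the argument the paper intends: the corollary is stated as an immediate consequence of Lemma \ref{conv-lemma}, and splitting $\cos\theta$ and $\sin\theta$ into $e^{\pm i\theta}$, applying the lemma to each piece (with $\theta\mapsto-\theta$ for the $e^{+i\theta}$ part), and invoking $\widehat{\phi}(k)=\widehat{\phi}(-k)$ to recombine the main terms is exactly the right bookkeeping, with the factors of $\tfrac{1}{2}$ balancing as you say. You also correctly identify the one place the symmetry hypothesis is genuinely needed.
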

Now we can prove Theorem \ref{main}. Here we only give a sketch of the
proof, the detail of the proof is deferred to the Appendix.

\begin{proof}\textbf{of Theorem \ref{main}}: In order to simplify the
notation, we denote $\overline{\theta}=\theta_{k_0}^n$ and
and define $\overline{(\cdot)}$ as the mapping from $t$ to
$\overline{\theta}$, i.e.
$\overline{f}(\overline{\theta})=f(t),\quad \forall f$.
In our algorithm, we update $\theta_{k_0}^{n+1}$ by the following
step
\begin{eqnarray}
  \theta_{k_0}^{n+1}=\overline{\theta}-\arctan\left(\frac{b(t)}{a(t)}\right),\quad
  a(t)=A(\overline{\theta}(t)),\quad b(t)=B(\overline{\theta}(t)),
\end{eqnarray}
where
\begin{eqnarray}
 A(\gamma)=2\int \overline{f}(\overline{\theta})
\cos(\overline{\theta})\phi\left(\overline{\theta}-\gamma\right) d\overline{\theta},\quad
B(\gamma)=2\int \ol{f}(\ot) \sin(\overline{\theta})\phi\left(\overline{\theta}-\gamma\right) d\overline{\theta} .
\end{eqnarray}
We will prove that $a(t)$ and $b(t)$ satisfy the following estimates:
\begin{eqnarray}
  a(t)&=&A\left(\overline{\theta}(t)\right)=a_{k_0}(t)\cos\left(\theta_{k_0}(t)-\overline{\theta}\right)+O(\e).\\
  b(t)&=&B\left(\overline{\theta}(t)\right)=a_{k_0}(t)\sin\left(\theta_{k_0}(t)-\overline{\theta}\right)+O(\e).
\end{eqnarray}
Then, we can get
\begin{eqnarray}
  \Delta \theta=\arctan\left(\frac{B(t)}{A(t)}\right)=\theta_{k_0}(t)-\overline{\theta}+O(\e),
\end{eqnarray}
which implies that
\begin{eqnarray}
  \left|\theta_{k_0}^{n+1}(t)-\theta_{k_0}(t)\right|=O(\e).
\end{eqnarray}
First, let us estimate $A(\gamma)$ as follows:
\begin{eqnarray}
  \label{eq-cos}
 A(\gamma)&=&2\int \ol{f}(\ot) \cos(\overline{\theta})\phi\left(\overline{\theta}-\gamma\right) d\overline{\theta}
=2\sum_{k=1}^n\int \ol{a}_k(\ot)\cos\theta_k(t) \cos(\overline{\theta})\phi\left(\overline{\theta}-\gamma\right) d\overline{\theta}
\nonumber\\
&=&\sum_{k=1}^n\int \ol{a}_k(\ot)\cos\left(\theta_k(t)+\overline{\theta}\right)\phi\left(\overline{\theta}-\gamma\right) d\overline{\theta}
+\sum_{k=1}^n\int \ol{a}_k(\ot)\cos\left(\theta_k(t)-\overline{\theta}\right)\phi\left(\overline{\theta}-\gamma\right) d\overline{\theta}
\nonumber\\
&=&\sum_{k=1}^n\int \ol{a}_k(\ot)\cos\left(\theta_k(t)+\overline{\theta}\right)\phi\left(\overline{\theta}-\gamma\right) d\overline{\theta}
+\sum_{k\ne k_0}\int \ol{a}_k(\ot)\cos\left(\theta_k(t)-\overline{\theta}\right)\phi\left(\overline{\theta}-\gamma\right) d\overline{\theta}
\nonumber\\
&&+\int \ol{a}_{k_0}(\ot)\cos\left(\theta_{k_0}(t)-\overline{\theta}\right)\phi\left(\overline{\theta}-\gamma\right) d\overline{\theta}
\nonumber\\
&=& I+II+III.
\end{eqnarray}
For I and II, the scale separation assumption of the data implies
that $\ol{a}_k(\ot)\cos\left(\theta_k(t) +\overline{\theta}\right)$
and $\ol{a}_k(\ot)\cos\left(\theta_k(t)-\overline{\theta}\right) $
($k\ne k_0$) are more oscillatory than the kernel
$\phi(\overline{\theta})$. Thus we expect that these two terms are
small after convolution with a smooth kernel. In fact,
we can prove that $I, II=O(\e)$ by Corollary \ref{coro}.

The estimate for III is more difficult. By our assumption, we
have
$ \left|\frac{\left(\theta_{k_0}^n\right)'(t)}{\theta_{k_0}'(t)}-1\right|<\frac{\Delta}{2}.$
This implies that
$\ol{a}_{k_0}(\ot)\cos\left(\theta_{k_0}(t)-\overline{\theta}\right)$ is smoother than the kernel $\phi(\overline{\theta})$.
Using Corollary \ref{coro}, we can prove that
$III= \ol{a}_{k_0}(\ot)\cos\left(\theta_{k_0}(t)-\overline{\theta}\right)+O(\e)$.
By combining these results, we get
\begin{eqnarray}
  a(t)=A\left(\overline{\theta}(t)\right)=a_{k_0}(t)\cos\left(\theta_{k_0}(t)-\overline{\theta}\right)+O(\e).
\end{eqnarray}
Similarly, we can prove the estimate for $b(t)$
\begin{eqnarray}
  b(t)=B\left(\overline{\theta}(t)\right)=a_{k_0}(t)\sin\left(\theta_{k_0}(t)-\overline{\theta}\right)+O(\e).
\end{eqnarray}
This completes the proof.
\end{proof}
\begin{remark}
  Under the same assumption, we can prove that the error of the instantaneous frequency is also order $\e$, i.e.
  \begin{eqnarray}
    \left|\left(\theta_{k_0}^{n+1}\right)'(t)-\theta_{k_0}'(t)\right|=O(\e).
  \end{eqnarray}
The argument is almost the same as the above proof, except that
the calculation is a little more involved.
\end{remark}
From the above theorem, we can see that the accuracy of our method depends
on the factor of scale separation. This is also consistent with our
numerical results. In the following numerical example, we compare the
IMFs obtained by our method for two
different signals. One has poor scale separation, the other one has
better scale separation. The signals are given by \myref{signal-ss}.
The signal $f_2$ has a better scale separation property than $f_1$
since its instantaneous frequency is twice of that of $f_1$. As shown
in Fig. \ref{comp-ss}, the error we obtain for $f_2$ is considerably
smaller than that of $f_1$.
\begin{eqnarray}
\label{signal-ss}
  a_0(t)=a_1(t)=\frac{1}{1.1+\cos(2\pi t)},\quad \theta=10\sin(2\pi t)+40\pi t.\nonumber\\
  f_1(t)=a_0(t)+a_1(t)\cos\theta(t),\quad  f_2(t)=a_0(t)+a_1(t)\cos(2\theta(t)) .
\end{eqnarray}
\begin{figure}

    \begin{center}
	\includegraphics[width=0.45\textwidth]{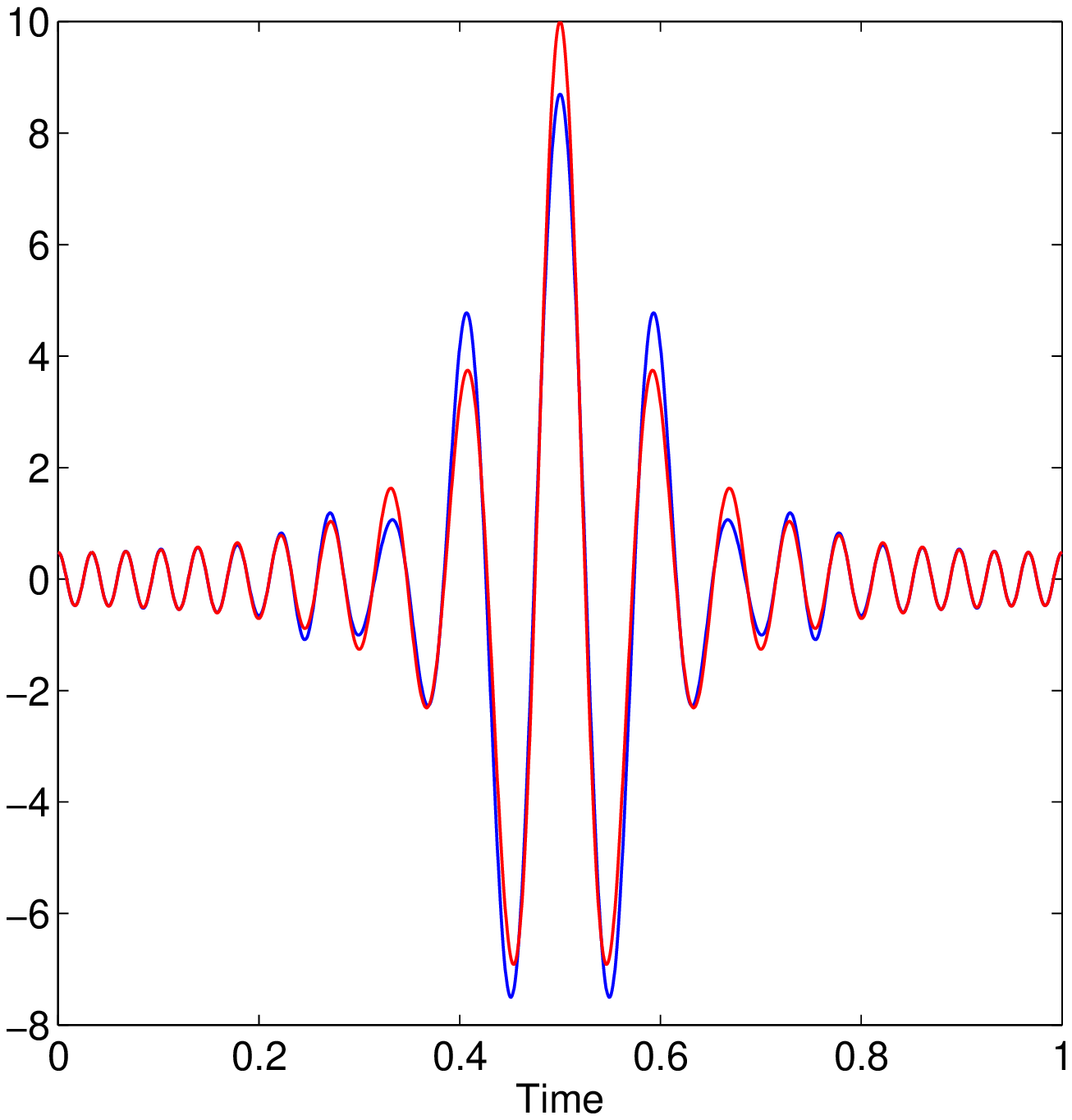}
	\includegraphics[width=0.45\textwidth]{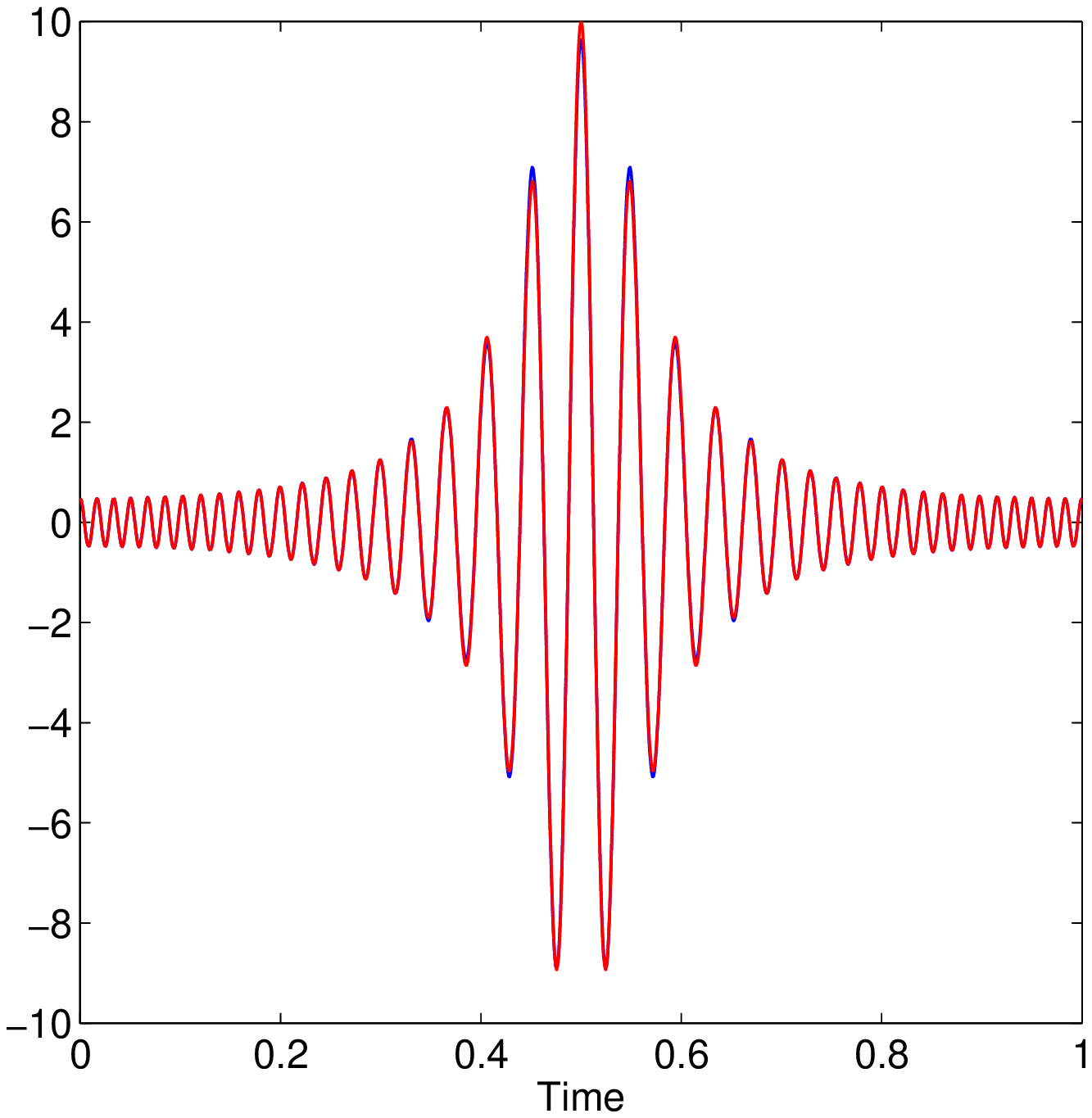}
     \end{center}
     \caption{\label{comp-ss} IMFs with
different factors of scale separation.
Left: poor scale separation; Right: good scale separation.}
\end{figure}

We would like to point out that the error estimate given by
Theorem \ref{main} is highly over-estimated. In Fig. \ref{comp-ss},
we can see that even for the signal with a poor scale separation property,
the instantaneous frequency we obtain is still reasonably accurate,
although the corresponding scale separation factor
$\e\approx 1.4$ is quite big
according to Definition \ref{scale-seperation}).

In the estimate of Lemma \ref{conv-lemma}, instead of taking the
supreme over $\mathbb{R}$, we can take supreme over a finite
interval, since we can choose a low-pass filter $\phi$ that decays
exponentially fast. Then, we can get a more local estimate:
\begin{eqnarray}
\left|\int a(\tau)e^{i\theta(\tau)}\; \phi(\tau-t) d\tau-a(t)e^{i\theta(t)}\widehat{\phi}\left(\theta'(t)\right)\right|
\le \sup_{t\in S_\phi} |a'(t)|I_1+\frac{1}{2}|a(t)|\sup_{t\in S_\phi}
 |\theta''(t)|I_2 ,
\end{eqnarray}
where $S_\phi=\left\{t\in \mathbb{R}: |\phi(t)|>\e\right\}$.

This seems to suggest that for the signal that does not have a good scale
separation property in some region, its influence on the accuracy
of the decomposition is limited to that region. This is the temporal
locality property we have mentioned in Section 5.1. This property
 can be also seen
in Fig. \ref{comp-ss}. In this example, the scales are not well
separated in the center of the interval. However, the scales are
better separated near the two ends. We can see that the error near
the boundary is much smaller than that in the center.

From this analysis, we can also see that the low-pass filter with
smooth Fourier spectrum (such as the cosine function given by
\myref{cutoff-cosine}) would perform better than that with
discontinuous spectrum (such as the stair function given by
\myref{cutoff-jump}) in terms of maintaining the temporal locality
property of the decomposition. The low-pass filter with discontinuous
spectrum decays much slower in the time domain due to the Gibbs
phenomena. This is why we use the cosine low-pass filter
instead of the stair one.

Theorem \ref{main} tells us that if we have a good initial guess,
then there is no need to do iterations. But in most cases, we have
only a rough initial guess, and the condition in Theorem \ref{main} may
not be satisfied. In this case, the iterative procedure in our algorithm
improves the result gradually as the number of iterations increases.
It will provide a good approximation after a number of iterations.
In Fig. \ref{illustrate-iteration}, we show how the iteration can
improve the approximation of the instantaneous frequency for a
simple chirp signal: $f(t)=\cos(10\pi(3t+1)^2)$.

In this example, the initial guess for the instantaneous frequency
is a constant,
$\theta_0=80\pi t$. Near the intersection of $\theta_0' = 80\pi $ and the
exact instantaneous frequency, the initial guess is relatively good.
After one step, we can get a better approximation in this local region.
This new estimate gives us a better guess for the instantaneous
frequency in a slightly larger interval that contains the good interval
given by the initial guess. Then in the next step, we can get a
good approximation in an even larger interval.
Gradually, we can get an accurate approximation of the
instantaneous frequency in the whole interval.
Fig. \ref{illustrate-iteration} plots the approximate instantaneous
frequency in different steps. As the number of iterations increases,
the region in which we have a good approximation becomes larger and
larger. Finally, the iterative algorithm produces an accurate
instantaneous frequency in the entire domain.

\begin{figure}

    \begin{center}
	\includegraphics[width=0.6\textwidth]{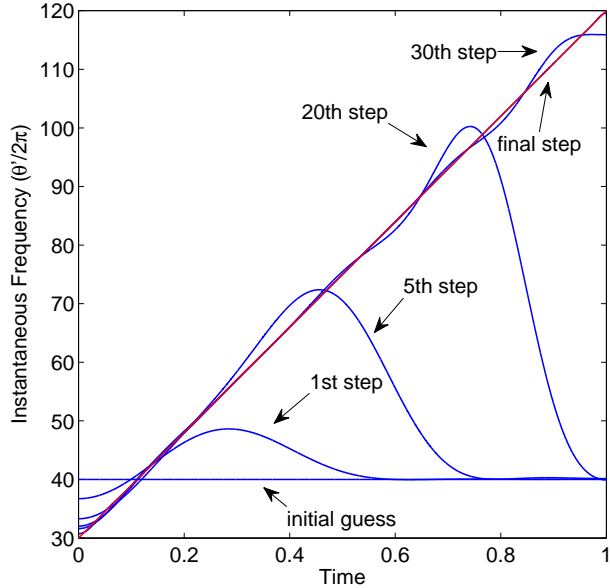}
     \end{center}
     \caption{\label{illustrate-iteration} Instantaneous frequency at different step.}
\end{figure}

\section{Conclusion}

In this paper, we introduce a new data-driven time-frequency analysis 
method based
on the nonlinear matching pursuit. The adaptivity of our decomposition
is obtained by looking for the
sparsest representation of signals in the time-frequency domain
from a largest possible dictionary that consists of all possible
candidates for intrinsic mode functions (IMFs). Solving this
nonlinear optimization problem is in general very difficult.
We propose a nonlinear matching pursuit method to solve this
nonlinear optimization problem by generalizing the classical matching
pursuit for the $l^0$ optimization problem. One important advantage of
this nonlinear matching pursuit method is it can be implemented 
very efficiently. Further, this approach is very stable to noise.
For data with good scale separation property, our method gives
an accurate decomposition up to the boundary.

We have also carried out some preliminary theoretical study
for the nonlinear optimization method proposed in this paper.
In the case when the signal satisfies certain scale separation
conditions, we show that our iterative algorithm converges to an
approximate decomposition with the accuracy determined by
the scale separation factor of the signal.

There are some remaining issues to be studied in the future, 
such as data with poor scale separation property, data with intra-wave
frequency modulation, the so-called 'end effect' of data, and data with
incomplete or sparse samples and so on. We have addressed these issues
to some extent in this paper, but much more work need to be done to
resolve these challenging issues.

Another direction
is to generalize this adaptive data analysis method to high dimensional
data. In some physical applications such as propagation of nonlinear
ocean waves, each wave form has a dominating propagation direction.
In this case, our method has a natural generalization by adopting a
multi-dimensional phase function.

\vspace{10mm}
\noindent
\textbf{\Large Appendix}
\vspace{3mm}

\begin{proof}\textbf{of Theorem \ref{main}}:
In order to simplify the notation, we denote
$\overline{\theta}=\theta_{k_0}^n$ and use
$\overline{(\cdot)}$ to represent the mapping from
$t$ to $\overline{\theta}$, i.e.
$\overline{f}(\overline{\theta})=f(t),\quad \forall f$.

According to our algorithm, we update $\theta_{k_0}^{n+1}$ as follows:
\begin{eqnarray}
  \theta_{k_0}^{n+1}=\overline{\theta}-\arctan\left(\frac{b(t)}{a(t)}\right),
\quad
  a(t)=A(\overline{\theta}(t)),\quad b(t)=B(\overline{\theta}(t)),
\end{eqnarray}
where
\begin{eqnarray}
 A(\gamma)=2\int \ol{f}(\ot) \cos(\overline{\theta})\phi\left(\overline{\theta}-\gamma\right) d\overline{\theta},\quad
B(\gamma)=2\int \ol{f}(\ot) \sin(\overline{\theta})\phi\left(\overline{\theta}-\gamma\right) d\overline{\theta}.
\end{eqnarray}
We first estimate $A(\gamma)$ as follows:
\begin{eqnarray}
  \label{eq-cos2}
 A(\gamma)=2\int \ol{f}(\ot) \cos(\overline{\theta})\phi\left(\overline{\theta}-\gamma\right) d\overline{\theta}
=2\sum_{k=1}^n\int \ol{a}_k(\ot)\cos\theta_k(t) \cos(\overline{\theta})\phi\left(\overline{\theta}-\gamma\right) d\overline{\theta} .
\nonumber
\end{eqnarray}
For $k\ne k_0$, we have
\begin{eqnarray}
&&2\int \ol{a}_k(\ot)\cos\theta_k(t) \cos(\overline{\theta})\phi\left(\overline{\theta}-\gamma\right) d\overline{\theta}
\nonumber\\
&=&\int \ol{a}_k(\ot)\left(\cos\left(\theta_k(t)+\overline{\theta}\right)+\cos\left(\theta_k(t)-\overline{\theta}\right)
\right)\phi\left(\overline{\theta}-\gamma\right) d\overline{\theta} .
\end{eqnarray}
Since
\begin{eqnarray}
  \left|\frac{d \ol{a}_k(\ot)}{d\overline{\theta}}\right|=\left|\frac{a_k'(t)}{\overline{\theta}'(t)}\right|
\le  \e\left|\frac{\theta_k'(t)}{\overline{\theta}'(t)}\right|,
\end{eqnarray}
we obtain
\begin{eqnarray}
  \left|\frac{d^2}{d\overline{\theta}^2}\left(\theta_k(t)\pm \overline{\theta}\right)\right|
=\left|\frac{\theta_k''(t)\overline{\theta}'(t)-\theta_k'(t)\overline{\theta}''(t)}{\left(\overline{\theta}'(t)\right)^3}\right|
\le \e \left|\frac{\left(\theta_k'(t)\right)^2-\theta_k'(t)\overline{\theta}'(t)}{\left(\overline{\theta}'(t)\right)^2}\right|,
\quad \mbox{for}\;k\ne k_0.
\end{eqnarray}
Now we apply Corollary \ref{coro} for $k\ne k_0$ to obtain
\begin{eqnarray}
&&2\int \ol{a}_k(\ot)\cos\theta_k(t) \cos(\overline{\theta})\phi\left(\overline{\theta}-\phi\right) d\overline{\theta}
\nonumber\\
&=&\ol{a}_k(\ot)\cos\left(\theta_k(t)+\overline{\theta}\right)\widehat{\phi}\left(\frac{\theta_k'(t)}{\overline{\theta}'(t)}+ 1\right)
+\ol{a}_k(\ot)\cos\left(\theta_k(t)-\overline{\theta}\right)\widehat{\phi}\left(\frac{\theta_k'(t)}{\overline{\theta}'(t)}- 1\right)
+O(\e).\quad\quad
\end{eqnarray}
Using the condition $\Delta<\frac{d-1}{d+1/2}$, $\theta_k'(t)>d\theta'_{k-1}(t)$ and
 $\left|\frac{\overline{\theta}'(t)}{\theta_{k_0}'(t)}-1\right|<\frac{\Delta}{2}$, we get
 \begin{eqnarray}
   \frac{\theta_k'(t)}{\overline{\theta}'(t)}-1&=&\frac{\theta_k'(t)}{\theta_{k_0}(t)}
\frac{\theta_{k_0}'(t)}{\overline{\theta}'(t)}-1>d(1-\Delta/2)-1>\Delta,\quad \mbox{if}\; k>k_0,\\
   \frac{\theta_k'(t)}{\overline{\theta}'(t)}-1&=&\frac{\theta_k'(t)}{\theta_{k_0}(t)}
\frac{\theta_{k_0}'(t)}{\overline{\theta}'(t)}-1<(1+\Delta/2)/d-1<-\Delta,\quad \mbox{if}\; k<k_0,\\
\frac{\theta_k'(t)}{\overline{\theta}'(t)}+1&>&1>\Delta .
 \end{eqnarray}
Since the support of $\widehat{\phi}$ is within $[-\Delta, \Delta]$, we have
\begin{eqnarray}
2\int \ol{a}_k(\ot)\cos\theta_k(t) \cos(\overline{\theta})\phi\left(\overline{\theta}-\phi\right) d\overline{\theta}
=O(\e) .
\end{eqnarray}
For $k=k_0$, we proceed as follows
\begin{eqnarray}
&&2\int \ol{a}_k(\ot)\cos\theta_k(t) \cos(\overline{\theta})\phi\left(\overline{\theta}-\phi\right) d\overline{\theta}
\nonumber\\
&=&\int \ol{a}_{k_0}(\ot)\left(\cos\left(\theta_{k_0}(t)+\overline{\theta}\right)+\cos\left(\theta_{k_0}(t)-\overline{\theta}\right)
\right)\phi\left(\overline{\theta}-\phi\right) d\overline{\theta} .
\end{eqnarray}
Similarly, by using the assumption
\begin{eqnarray}
  \left|\frac{d \ol{a}_{k_0}(\ot)}{d\overline{\theta}}\right|=\left|\frac{a_{k_0}'(t)}{\overline{\theta}'(t)}\right|
\le  \e\left|\frac{\theta_{k_0}'(t)}{\overline{\theta}'(t)}\right|,
\end{eqnarray}
we obtain the following estimates:
\begin{eqnarray}
  \left|\frac{d}{d\overline{\theta}}\left(\theta_{k_0}(t)+ \overline{\theta}\right)\right|
=\left|\frac{\theta_{k_0}'(t)}{\overline{\theta}'(t)}+ 1\right|>1>\Delta,  \\
\left|\frac{d}{d\overline{\theta}}\left(\theta_{k_0}(t)- \overline{\theta}\right)\right|
=\left|\frac{\theta_{k_0}'(t)}{\overline{\theta}'(t)}- 1\right|<\frac{\Delta}{2},
\end{eqnarray}
\begin{eqnarray}
  \left|\frac{d^2}{d\overline{\theta}^2}\left(\theta_{k_0}(t)\pm \overline{\theta}\right)\right|
=\left|\frac{\theta_{k_0}''(t)\overline{\theta}'(t)-\theta_{k_0}'(t)\overline{\theta}''(t)}
{\left(\overline{\theta}'(t)\right)^3}\right|
\le \e \left|\frac{\left(\theta_{k_0}'(t)\right)^2-\theta_{k_0}'(t)\overline{\theta}'(t)}{\left(\overline{\theta}'(t)\right)^2}.
\right|.
\end{eqnarray}
By applying Corollary \ref{coro} again, we get
\begin{eqnarray}
&&2\int \ol{a}_k(\ot)\cos\theta_k(t) \cos(\overline{\theta})\phi\left(\overline{\theta}-\phi\right) d\overline{\theta}
\nonumber\\
&=&\ol{a}_{k_0}(\ot)\cos\left(\theta_{k_0}(t)+\overline{\theta}\right)\widehat{\phi}\left(\frac{\theta_{k_0}'(t)}
{\overline{\theta}'(t)}+ 1\right)
+\ol{a}_{k_0}(\ot)\cos\left(\theta_{k_0}(t)-\overline{\theta}\right)\widehat{\phi}\left(\frac{\theta_{k_0}'(t)}{\overline{\theta}'(t)}
- 1\right)
+O(\e)\nonumber\\
&=&\ol{a}_{k_0}(\ot)\cos\left(\theta_{k_0}(t)-\overline{\theta}\right)+O(\e).
\end{eqnarray}
Finally, we get the following estimate for $a(t)$,
\begin{eqnarray}
  \label{esti-cos-final}
  a(t)=A\left(\overline{\theta}(t)\right)=a_{k_0}(t)\cos\left(\theta_{k_0}(t)-\overline{\theta}\right)+O(\e).
\end{eqnarray}
For $b(t)$, we can obtain a similar estimate
\begin{eqnarray}
  \label{esti-cos-final1}
  b(t)=B\left(\overline{\theta}(t)\right)=a_{k_0}(t)\sin\left(\theta_{k_0}(t)-\overline{\theta}\right)+O(\e).
\end{eqnarray}
Thus, we have
\begin{eqnarray}
  \Delta\theta=\arctan\left(\frac{B(t)}{A(t)}\right)=\theta_{k_0}(t)-\overline{\theta}+O(\e),
\end{eqnarray}
which implies that
\begin{eqnarray}
  \left|\theta_{k_0}^{n+1}(t)-\theta_{k_0}(t)\right|=O(\e).
\end{eqnarray}
This completes the proof.
\end{proof}

\vspace{0.2in}
\noindent
{\bf Acknowledgments.}

This work was in part supported by the AFOSR MURI grant
FA9550-09-1-0613.
We would like to thank Professors Norden E. Huang and Zhaohua Wu
for many stimulating discussions on EMD/EEMD and topics related to
the research presented here. We would also like to thank Professors
Ingrid Daubechies, Stanley Osher, and Zuowei Shen for their
interest in this work and for a number of valuable discussions.
Prof. Hou would like to express his
gratitude to the National Central University (NCU) for their support
and hospitality during his visits to NCU in the past two years.


\begin{thebibliography}{99}



%
%
\bibitem{Boashash92c}
B. Boashash,
Time-Frequency Signal Analysis: Methods and Applications, 
Longman-Cheshire, Melbourne and John Wiley Halsted Press, New York, 1992.

\bibitem{BDL09}
A. M. Bruckstein, D. L. Donoho, M. Elad,
 From sparse solutions
of systems of equations to sparse modeling of signals and images,
{\it SIAM Review}, {\bf 51}, pp. 34-81, 2009.

\bibitem{Candes-Tao06} E. Cand$\grave{\mbox{e}}$s and T. Tao, Near optimal signal recovery from random projections: Universal encoding strategies?, {\it IEEE Trans. on Information Theory}, {\bf 52(12)}, pp. 5406-5425, 2006.

\bibitem{CRT06a}
E. Candes, J. Romberg, and T. Tao,
 Robust uncertainty principles:
Exact signal recovery from highly incomplete frequency information,
{\it IEEE Trans. Inform. Theory}, {\bf 52}, pp. 489-509, 2006.


%


\bibitem{CDS98} S. Chen, D. Donoho and M. Saunders, Atomic decomposition by basis pursuit, {\it SIAM J. Sci. Comput.}, {\bf 20},
pp. 33-61, 1998.

%

%

%

\bibitem{Daub92}
I. Daubechies,
Ten Lectures on Wavelets,
CBMS-NSF Regional Conference Series on Applied Mathematics, Vol. 61,
SIAM Publications, 1992.

\bibitem{DLW11}
I. Daubechies, J. Lu and H. Wu, Synchrosqueezed wavelet transforms: an empirical mode decomposition-like tool,
{\it Appl. Comp. Harmonic Anal.}, {\bf 30} (2011), pp. 243-261.

\bibitem{Donoho06}
D. L. Donoho,
 Compressed sensing,
{\it IEEE Trans. Inform. Theory}, {\bf 52}, pp. 1289-1306, 2006.

\bibitem{Flandrin99}
P. Flandrin,
Time-Frequency/Time-Scale Analysis,
Academic Press, San Diego, CA, 1999.


\bibitem{Gross01} R. S. Gross, 2001 Combinations of Earth orientation measurements: SPACE2000, COMB2000,
  and POLE2000. JPL Publication 01-2. Jet Propulsion Laboratory, Pasadena, CA.



%

\bibitem{Gabor46}
D. Gabor,
Theory of communication, {\it J. IEE.}, {\bf 93}, pp. 426-457, 1946.

\bibitem{GO09} Tom Goldstein and Stanley Osher, The Split Bregman Method for $L_1$-Regularized Problems, {\it
SIAM J. Imaging Sci.}, {\bf 2}, pp. 323-343, 2009.

\bibitem{HS11}
T. Y. Hou and Z. Shi,
Adaptive Data Analysis via Sparse Time-Frequency Representation,
{\it Advances in Adaptive Data Analysis}, {\bf 3}, pp. 1-28, 2011

\bibitem{Huang98} N. E. Huang et al., The empirical mode decomposition and the Hilbert spectrum for nonlinear and non-stationary time series analysis,
{\it Proc. R. Soc. Lond. A}, {\bf 454} (1998), pp. 903-995.



\bibitem{JP90} D. L. Jomes and T. W. Parks, A high resolution data-adaptive time-frequency representation, {\it
IEEE Trans. Acoust. Speech Signal Process}, {\bf 38}, pp. 2127-2135, 1990.


\bibitem{LT96}
 P. J. Loughlin and B. Tracer,
On the amplitude - and frequency-modulation decomposition
of signals, {\it J. Acoust. Soc. Am.}, {\bf 100}, pp. 1594-1601, 1996.

\bibitem{LWB93}
B. C. Lovell, R. C. Williamson and B. Boashash,
The relationship between instantaneous
frequency and time–frequency representations,
{\it IEEE Trans. Signal Process}, {\bf 41}, pp. 1458-1461, 1993.


\bibitem{MZ93} S. Mallat and Z. Zhang, Matching pursuit with time-frequency dictionaries, {\it IEEE Trans. Signal Process},
 {\bf 41},
pp. 3397-3415, 1993.



\bibitem{Mallat09} S. Mallat, A wavelet tour of signal processing: the Sparse way, Academic Press, 2009.

\bibitem{Meville83}
W. K. Meville,
Wave modulation and breakdown,
{\it J. Fluid Mech.}, {\bf 128}, pp. 489-506, 1983.


\bibitem{OW04}
S. Olhede and A. T. Walden,
The Hilbert spectrum via wavelet projections, {\it Proc.
Roy. Soc. London A}, {\bf 460}, pp. 955-975, 2004.

\bibitem{Picinbono97}
B. Picinbono,
 On instantaneous amplitude and phase signals,
{\it IEEE Trans. Signal Process}, {\bf 45} (1997), pp. 552-560.

\bibitem{QC96} S. Qian and D. Chen, Joint Time-Frequency Analysis: Methods and Applications, Prentice Hall, 1996.

\bibitem{QC94} S. Qian and D. Chen, Signal representation using adaptive normalized Gaussian functions, 
{\it Signal Processing}, {\bf 36}
pp. 1-11, 1994.


\bibitem{Rice44}
S. O. Rice,
Mathematical analysis of random noise,
{\it Bell Syst. Tech. J.}, {\bf 23}, pp. 282-310, 1944.

\bibitem{Shekel53}
J. Shekel,
Instantaneous frequency, {\it Proc. IRE}, {\bf 41} ,
pp. 548-548, 1953.

\bibitem{TG07} J. Tropp and A. Gilbert, Signal recovery from random measurements via Orthogonal Matching Pursuit,
{\it IEEE Trans. Inform. Theory}, {\bf 53}, pp. 4655-4666, 2007.

\bibitem{VdP46}
B. Van der Pol,
The fundamental principles of frequency modulation,
{\it Proc. IEE}, {\bf 93}, pp. 153-158, 1946.

\bibitem{Wu11}

Hau-tieng Wu, Instantaneous frequency and wave shape functions (I), arXiv:1104.2365v1.

\bibitem{WH09}
Z. Wu and N. E. Huang,
Ensemble Empirical Mode Decomposition: a noise-assisted data analysis method, {\it Advances in Adaptive Data Analysis}, 
{\bf 1}, pp. 1-41, 2009. 


\end{thebibliography}
\end{document}